\newtheorem{theorem}{Theorem}[section]
\newtheorem{definition}[theorem]{Definition}
\newtheorem{lemma}[theorem]{Lemma}
\newtheorem{cor}[theorem]{Corollary}
\newtheorem{rem}[theorem]{Remark}
\newtheorem{proposition}[theorem]{Proposition}
\newtheorem{remark}[theorem]{Remark}
\newtheorem{example}[theorem]{Example}
\newcommand{\co}{{\mathrm{co}}\,}
\newcommand{\relint}{{\mathrm{relint}}\,}
\newcommand{\R}{{\Bbb R}}
\newcommand{\Z}{{\Bbb Z}}
\def\j1n{j=1,\dots,n}
\def\j1m{j=1,\dots,m}
\def\i1np1{\in +1}
\def\R{\mathbb{R}}
\def\rn{\mathbb{R}^n}
\def\i1np1{\in +1}
\def\R{\mathbb{R}}
\def\rn{\mathbb{R}^n}
\def\u1{u^{(1)}}
\def\h1{h^{(1)}}
\newcommand{\la}{\lambda}
\newcommand{\ga} {\gamma}
\newcommand{\Ga} {\Gamma}
\newcommand{\si}{\sigma}
\newcommand{\Om} {\Omega}
\newcommand{\RR}{{\mathbb R}}
\newcommand{\pa}{\partial}
\begin{document}

\begin{title}{Bounding regions to plane   steepest descent  curves of quasi convex families
\footnote{This work has been partially supported by INdAM-GNAMPA (2014).}}
\end{title}
\author{Marco Longinetti\footnote{Marco.Longinetti@unifi.it, Dipartimento DEISTAF,
Universit\`a degli Studi di Firenze, P.le delle Cascine 15,  50144
Firenze - Italy}\\
Paolo Manselli\footnote{Paolo.Manselli@unifi.it, Dipartimento di Matematica "U. Dini, P.zza Ghiberti 27,  50122
Firenze - Italy}\\
Adriana Venturi\footnote{Adriana.Venturi@unifi.it, Dipartimento DEISTAF,
Universit\`a degli Studi di Firenze, P.le delle Cascine 15,  50144
Firenze - Italy}}
\date{}
\maketitle

%\keywords {steepest descent curves, quasi convex families, involutes.}
%\subjclass[msc2010]{Primary: 52A10;  secondary: 37C10; 53A04. }
\begin{abstract} Two dimensional steepest descent curves (SDC) for a quasi convex family  are considered; the problem
of their extensions (with constraints) outside of a 
convex body $K$ is studied. 
It is shown that possible extensions are constrained to lie inside of suitable bounding regions depending on $K$.
These regions are bounded by arcs of involutes of $\pa K$ and satisfy many inclusions properties.
The involutes of the boundary of an arbitrary plane convex body  are defined and written by their support function.
Extensions SDC of minimal length are constructed. Self contracting sets (with opposite orientation) are considered, 
necessary and/or sufficients conditions for them to be subsets of a SDC are proved.
\end{abstract}
\maketitle

\section{Introduction}
Let $u$ be a smooth  function defined
in a convex body $\Om\subset \RR^n$.  Let $Du(x)\neq 0$ in $\{ x\in \Om:  u(x) > \min u\}$.
A classical steepest descent curve of $u$ is  a rectifiable curve
$s\to x(s)$  solution to
$$ \dfrac{dx}{ds}= \dfrac{Du}{|Du|}(x(s)) .$$
 Classical steepest descent curves are the integral curves of a unit   field normal
 to the sublevel sets  of the given function $u$. We are interested in steepest descent curves 
 that are integral curves to a unit field normal to the family  $\{\Om_t\}:=\{x: u(x)\leq t\}$
 of the sublevel sets  for a quasi convex function $u$ (see Definition \ref{defstratifications}); $\{\Om_t\}$
 will be called a quasi convex family as in \cite{fenc}. 
 Sharp bounds about the length of the steepest descent curves for a quasi convex family, have
 been proved in   \cite{Langetepe},\cite{Mainik},\cite{Manselli-Pucci}.
 The geometry of these curves, equivalent definitions, related questions and  generalizations have
 been studied in \cite{Bolte}, \cite{Daniilidis3},  \cite{Daniilidis}, \cite{MLV}, \cite{selfdual}.

In the above works, it has beeen proved that 
{\em steepest descent curves for a quasi convex family} (SDC) can be characterized as bounded oriented
rectifiable curves $\ga \subset \R^n$,
with a locally lipschitz continuous parameterization $T\ni t \to x(t)$ satisfying 
\begin{equation}\label{defselexpwithder}
 \langle \dot{x}(t), x(\tau)-x(t) \rangle \leq 0 , \quad     \mbox{a.e.}\quad t\in T, \quad \forall \tau\leq t;
\end{equation}
$\langle \cdot, \cdot\rangle$ is the scalar product in $\RR^n$.
Let an  ordering $\preceq $ be chosen on $\ga$, according to the orientation; let us denote
 \begin{equation}\label{defgax}
\ga_x=\{ y\in \ga:  y\preceq x  \}.
\end{equation}
The curves $\ga$ satisfying \eqref{defselexpwithder} are SDC for the related quasi convex family
   $\Om_t:=co (\ga_{x(t)})$, 
 where  $co(A)$ denotes the convex hull of the set $A$.

The SDC could also be chacterized in an equivalent way as {\em self-distancing curves}, namely oriented ($\preceq$) continuous curves
with the property that  the distance of  $x$ to an arbitrarily fixed previous point 
$x_1$   is not decreasing:
\begin{equation}\label{expandingproperty}
\forall x_1,x_2,x_3\in \ga\, , x_1\preceq x_2 \preceq x_3  \Rightarrow |x_2-x_1| \leq |x_3-x_1|.
\end{equation}
In \cite{MLV} self-distancing curves are called self-expanding curves.
With the opposite orientation these curves have been also  introduced, studied 
and called self-approaching curves (see \cite{Langetepe}), or self-contracting curves (see \cite{Daniilidis}).
%It easy to see that \eqref{defselexpwithder} implies that $t \to |x(t)-x(\tau)|^2$ is not decreasing in $t$ for $t\geq \tau$,
%so \eqref{expandingproperty} holds. In \cite[Theorem 3.3]{Daniilidis3} and in \cite[Theorem 4.10]{MLV} has been
%proved that a self-distancing curve $\ga$ has an intrinsic, rectifiable and one to one parameterization; thus a self-distancing curve
%is a steepest descent curve for a quasi convex family and the other way around. 

An important property that will be used later is the property of distancing from a set $A$:
\begin{definition}\label{defsecA} Given a set $A$, an absolutely continuous curve $\ga$, $T\ni t \to x(t)$
has the  distancing from $A$ property  if it satisfies
\begin{equation}\label{defselexpwithderfromK}
 \langle \dot{x}(t), y-x(t) \rangle \leq 0 , \quad  \quad \mbox{a.e.}\quad t \in T, \mbox{\quad and \quad}\forall y\in A.
\end{equation} 
\end{definition}

Steepest descent curves (or self-distancing curves) $\ga$ that also satisfy the above property with respect 
to a convex set $K$ will be called $SDC_K$. Of course if $\ga$ is a SDC and $x\in\ga$
 then $\ga \setminus \ga_x$ is a $SDC_{\co(\ga_x)}$.

 In the present work we are interested  on the behaviour and properties of a {\bf plane} SDC $\ga$ beyond its final point $x_0$. 
  The principal goal  of the paper is to show that conditions \eqref{defselexpwithder} 
  or \eqref{expandingproperty} imply constraints
 for possible extensions of 
 the curve $\ga$ beyond $x_0$; these constraints are written as   bounding regions for the possible extensions of $\ga_{x_0}$.
 
 Let us outline the content of our work. In \S 2 introductory definitions are given and covering maps 
 for the boundary of a plane convex
 set, needed for later use, are introduced. In \S 3 the involutes of the boundary of a plane convex body are introduced and 
 some of their properties are proved.
 
 In \S 4  plane regions depending on the convex body $co(\ga_{x_0})$ have been defined; these regions
 fence in or fence out the possible extensions of $\ga_{x_0}$.
 The boundary of these sets
  consists of arcs of involutes of convex bodies, constructed in \S 3. 
 As an application, in \S 4.1 the following problem has been studied: given a convex set $K$, $x_0\in \pa K$, $x_1\not \in K$
 is it possible to construct a $SDC_K$ joining $x_0 $ to $x_1$?  Minimal properties of this connection have been introduced
 and studied. In \S 5 sets of points more general than SDC are studied . 
 A set $\si\subset \RR^2$ (not necessarily a curve) of ordered points satisfying \eqref{expandingproperty} 
 will be   called {\em self-distancing set}, see also Definition \ref{defselfexpandinglinearly}; 
 with the opposite order, $\si$  was  called self-contracting  in  \cite{Daniilidis3} and many 
 properties of these sets, as only subsets of self contracting curves,  were  there obtained.
 A natural question arises:  does it exist a steepest descent curve $\ga \supset \si$? 
 Examples,  necessary and/or  sufficient conditions are given when 
 $\si$ consists of a finite or countable number of points $x_i \in \RR^2$  and/or 
  steepest descent curves $\ga^i \subset \RR^2$.  
 
 In the present work  the two dimensional case is studied. Similar results for the  $n$ dimensional case are
 an open problem stated at the end of the work.
 %To the knolowedge of the authors it not seems easy to define with similar properties in any dimensions the 
 %involutes of a convex body; this seems the principal obstacle to extend the results obtained in the paper 
 %to any dimension.

\section{Preliminaries and definitions}
Let
$$
B(z,\rho)=\{x\in\rn\,:\,|x-z|<\rho\}\,,\quad\,
S^{n-1}=\partial B(0,1) \, \quad n\geq 2.
$$
A not empty, compact convex set $K$ of $\rn$ will be called  a {\em
convex body}. From now on, $K$ will {\bf always be a convex body not reduced to a point}. 
$Int(K)$ and $\pa K$ denote the interior of $K$ and
the boundary of $K$, $|\pa K|$  denotes its length, $cl(K)$  is    the closure of $K$,
$\mathit{Aff}(K)$ will be the smallest affine space containing $K$;
 $\relint K$ and  $\pa_{rel} K$ are  the corresponding subsets in
the topology of $\mathit{Aff}(K)$.
 For every set $S \subset\mathbb{R}^n$, $co(S)$ is the convex hull of $S$. 

Let $q\in  K$; the {\em normal cone } at
$q$ to $K$ is the closed convex cone

\begin{equation}\label{normalcone} N_K(q)=\{x\in\rn: \langle x,y-q\rangle \le 0 \quad
\forall y \in K\}.
\end{equation}
 When $q \in Int(K)$, then  $N_K(q)$ reduces to zero.

The  {\em tangent cone}, or  support cone, of  K at a
point $q \in \pa K$  is given by
$$ T_K(q)=cl \left(\bigcup_{y\in K} \{s( y-q): s \geq 0  \}\right).$$

In two dimensions cones will be called sectors.

Let $K$ be a convex body  and $p$ be a point. A  simple
cap body $K^p$ is:
\begin{equation}\label{capbody}
K^p=\bigcup_{0\le \lambda \le 1}\{ \la K+(1-\la)p\}=co(K\cup\{p\}).
\end{equation}
Cap bodies properties can be found in \cite{Bonnfen},\cite{Schn}.

\subsection{Self-distancing sets and steepest descent curves}
Let us recall the following definitions
\begin{definition}\label{defselfexpandinglinearly}
Let us call  {\bf self-distancing  set } a bounded subset $\si$ of $\RR^n$,    linearly  ordered 
 (by $\preceq$),
 with the property:
\begin{equation}\label{IBDCmonlinearly}
x_1, x_2,
x_3 \in \si, \; \mbox{ and } \; x_1 \preceq x_2 \preceq x_3 \quad
\Longrightarrow \quad  |x_2-x_1| \leq |x_3-x_1|.
\end{equation}
\end{definition}
The self-distancing sets has been introduced in \cite{Daniilidis3} with the opposite order.
If a self-distancing  set $\si$ is a closed connected set, not reduced to a point,  then it  can be proved that 
$\si$ is a steepest descent  curve (see 
\cite[Theorem 3.3]{Daniilidis3}, \cite[Theorem 4.10]{MLV}) and it 
will also be called  a {\bf self-distancing curve}  $\ga$. 

The short name SDC will be used for self-distancing curves (steepest descent curves) in all 
the paper.
 \begin{definition}\label{defSEC+} Let $K$ be a  convex body,
$\ga\subset \R^2\setminus \relint K$
 will be  called  a {\bf self-distancing    curve from $K$} (denoted $SDC_K$) if:
\begin{enumerate}
\item[(i)] $\ga$ is  a self-distancing  curve,
\item[(ii)] $\ga \cap \pa_{rel} K \neq \emptyset$,
\item[iii)] $\ga$ has the {\bf distancing  from $K$} property:
\begin{equation}\label{ineqdefEC+}
\forall y \in K, \forall  x, x_1 \in \ga: x \preceq x_1 \Rightarrow |x-y| \leq |x_1-y|.
\end{equation}
\end{enumerate}
When (ii) does not hold, that is $\ga \cap \pa_{rel} K =\emptyset$, $\ga$ will be called a {\bf deleted 
self-distancing    curve from $K$}.
\end{definition}
\begin{rem}\label{remga^K} Let $\ga$ be a $SDC_K$, then  $\ga$   has an absolutely continuous parameterization, thus
property \eqref{ineqdefEC+} for $\ga$ 
is equivalent to \eqref{defselexpwithderfromK}.
\end{rem}
Nested families of convex sets have been introduced and studied by De Finetti \cite{Defi} and Fenchel
\cite{fenc}. Let us recall some definitions.
\begin{definition}\label{defstratifications} Let $T$ be a real interval.
A {\bf convex stratification} (see \cite{Defi}) is  a not empty family $\mathfrak{K}$ of  
convex bodies $\Om_t\subset \rn$, $t\in T \subset \RR$, linearly
strictly  ordered by inclusion ($\Om_{1} \subset \Om_{2} $,
$\Om_{1} \neq \Om_{2} $), with a maximum set ($\max\mathfrak{K}$) and a minimum set
($\min\mathfrak{K}$).

Let $\mathfrak{K}=\{\Om_t\}_{t\in T}$ be a convex stratification. 
If for every
 $s \in T\setminus \{\max T \}$ the property:
$$\bigcap_{t>s}\Om_t=\Om_{s}$$
holds, then
as in \cite{fenc}, $\mathfrak{K}=\{\Om_t\}_{t\in T} $ will be called a {\bf quasi convex family}.
\end{definition}

An important  quasi convex family  associated to a continuous
self-distancing  curve from $K$, $\ga$: $t\to x(t)$ is $\mathfrak{K}=\{\Om_t\}_{t\in T},$ where
$$\Om_t=co(\ga_{x(t)}\cup K).$$
The couple $(\ga,\mathfrak{K})$ is  special case of {\em Expanding Couple}, 
a class introduced in \cite{MLV}.

\begin{remark}\label{sdcofprevious}
 If $\ga\in SDC_K$, then for  all $x \in \ga$ the curve $(\ga\setminus \ga_x)\cup \{x\}$ is a self-distancing curve from the 
 convex hull of the set $\ga_x \cup K$.
\end{remark}

This fact is a direct consequence of the following 
\begin{proposition}[\cite{MLV}, Lemma 4.9 ]\label{lemmaecissec}  Let  $p,q,y_i\in \rn,  \,  i=1, \ldots,s  $. If
\begin{equation}\label{axesineq}
|p-y| \leq |q-y|, \quad \mbox{for \quad } y=y_i, \,  i=1, \ldots,s
\end{equation}
then the same holds for every  $y\in co(\{y_i, i=1,\ldots, s\})$. In \eqref{axesineq}  inequality $\leq$
can be changed with strict inequality.
\end{proposition}

\subsection{The support function of a plane convex body}\label{2.2}
Let $K\subset \RR^n$ be a convex body not reduced to a point.

For  a convex body $K$, the {\em support function} is
defined as

\begin{equation*}
\label{equation1} H_K(x)=\sup_{y\in K}\langle x,y\rangle\,,\quad
x\in\mathbb{R}^n,
\end{equation*}
where $\langle \cdot, \cdot \rangle$ denotes the  scalar
product in $\mathbb{R}^n$. For $n=2$, $\vartheta \in \RR$, let $\theta=(\cos \vartheta, \sin \vartheta)\in S^1$
and $h_K(\vartheta):=H_K(\theta)$, it will be denoted 
 $h(\vartheta)$ if no ambiguity arises. 

For every $\theta\in S^1$ there exists at least  one point $x\in \pa K$ such that:
\begin{equation}\label{defFtheta}
 \langle \theta, y-x\rangle \leq 0 \quad \forall y \in K;
 \end{equation}
this means that the line through $x$ orthogonal to  $\theta$ supports $K$.
For every $x\in \pa K$ let $\widehat{N_x}$ the set of $\theta \in S^1$ such that \eqref{defFtheta}
holds. Let $F(\theta)$ be the set of all $x\in \pa K$ satisfying \eqref{defFtheta}. If
$\pa K$ is  strictly convex at the direction $\theta$ then 
$F(\theta) $ reduces to one point and  it will be denoted by $x(\theta)$.
 \begin{definition}\label{defGF} The set valued map: $G: \pa K \to S^1, \pa K \ni x \to \widehat{N_x}\subset S^1$, 
 is the generalized Gauss map;
$x\in \pa K$ is a vertex on $\pa K$ iff  $\widehat{N_x}$ is a sector with interior points. 
The set valued map $F: S^1 \to \pa K, S^1\ni \theta \to F(\theta) \subset \pa K$ is the reverse generalized Gauss map;
$F(\theta)$  is a closed segment, possibly reduced to a single point, 
and it will be called 1-face when it has interior points.
  \end{definition}

 Let $P$ be the covering map 
 $$P:\RR \to S^1, \RR \ni \vartheta \to \theta=(\cos \vartheta, \sin \vartheta) \in S^1.$$

Let $L=|\pa K|$,
let $s \to x_l(s), 0 \leq s < L$ ($s \to x_r(s), 0 \leq s < L$) 
be the parametric representations of $\pa K$  depending on the arc length counterclockwise (clockwise) with
an initial point (not necessarily the same). Let us extend $x_l(\cdot)$ and $x_r(\cdot)$ by defining
$$x_l(s):=x_l(s-kL) \quad \mbox{if} \quad kL\leq s < (k+1)L, \quad (k\in \Z).$$
Similarly for $x_r$.

Let us fix $x_0\in \pa K$,  $\theta_0 \in G(x_0)$, $\theta_0=(\cos \vartheta_0, \sin \vartheta_0)$, $\vartheta_0\in \RR$.

For later use, we need to have $x_0=x_l(s_0)=x_r(s_0)$; this can be realized by choosing suitable initial points for 
the parameterizations $x_l$ and $ x_r$.

Then
$$x_l(s_0+s)=x_r(s_0+L-s), \, \forall s\in \RR.$$ 

The maps 
$$x_l: \RR \to \pa K, \quad x_r:\RR \to \pa K, $$
are covering maps. 

  The initial parameters will be
  $$x_0=x_l(s_0)=x_r(s_0) \in \pa K, S^1\ni \theta_0 \in \stackrel{-1}{F}(x_0), \RR \ni \vartheta_0\in \stackrel{-1}{P}(\theta_0),$$
  ( $\stackrel{-1}{F}(x_0),\stackrel{-1}{P}(\theta_0)$ are the back images   of $F,P$ respectively).
Let  $k\in \Z$.
Let us define, for $\vartheta_0+2k\pi <\vartheta < \vartheta_0+2(k+1)\pi$:
\begin{equation}\label{defsl+}
 s_{l+}(\vartheta):=\sup \{s\in \RR: kL< s \leq (k+1) L,\, x_l(s)\in F(P(\vartheta))\};
 \end{equation}
 if $\vartheta=\vartheta_0+2k\pi $
 \begin{equation}
 s_{l+}(\vartheta):=\sup \{s\in \RR: kL\leq  s < (k+1) L,\, x_l(s)\in F(P(\vartheta))\}.
\end{equation}
Similarly, let us define for $\vartheta_0+2(k-1)\pi <\vartheta < \vartheta_0+2k\pi$:
\begin{equation}
\label{defsr-}
 \quad s_{r-}(\vartheta):=\inf \{s\in \RR: kL\leq s < (k+1) L,\, x_r(s)\in F(P(\vartheta))\};
 \end{equation}
 if $\vartheta=\vartheta_0+2k\pi $
 \begin{equation}
  \quad s_{r-}(\vartheta):=\inf \{s\in \RR: (k-1)L< s \leq k L, \, x_r(s)\in F(P(\vartheta))\}.
\end{equation}
The function $s_{l+}$ is increasing in $\RR$, right continuous and with left limits (so called {\em cadlag} function).
Similar properties hold for $-s_{r-}$.
Let us recall that a cadlag increasing function $s(\vartheta)$, $\vartheta\in \RR$ has a right continuous inverse defined as
$$\vartheta(s)=\inf\{\vartheta: s(\vartheta)> s\}.$$
Let $ \vartheta_{l+}(\cdot)$ the right continuous inverse of $s_{l+}(\cdot)$. 
Let $s \to\vartheta_{r-}(s)$ the opposite of the right continuous inverse of $-s_{r-}(\cdot)$.

Let us introduce for simplicity
$${\bf n}_{\vartheta}:=(\cos\vartheta, \sin\vartheta),\, {\bf t}_{\vartheta}:=(-\sin\vartheta, \cos\vartheta).$$

Let $\vartheta \to h(\vartheta)$ be the support function of $K$.

It is well known (\cite{gugg}) that, if
 $\pa K$ is $C^2_+$ (that is $\pa K \in C^2$, with positive curvature), 
 then $h$ is $C^2$ and the counterclockwise element arc $ds$ of $\pa K$ 
is given by
\begin{equation}\label{ds=dtheta}
 ds=(h+\ddot{h})d\vartheta.
\end{equation}
  $h(\vartheta)+\ddot{h}(\vartheta)$ is the positive  radius of curvature;
 moreover the reverse Gauss map $F:\theta \to x \in \pa K$ 
 is a 1-1 map given by 
\begin{equation}\label{supfunctionplane}
 x(\theta):=h(\vartheta){\bf n}_{\vartheta}+\dot{h}(\vartheta){\bf t}_{\vartheta}, \quad \vartheta \in \stackrel{-1}{P}(\theta). 
\end{equation}

The previous formula also holds for an arbitrary convex body, for every $\vartheta$ such that $F(\theta)$ is reduced 
to a point, see \cite{Bonnfen}.
Let us recall that a real valued function $x\to f(x)$ is called semi convex on $\RR$ 
when there exists a positive constant $C$ such that $f(x)+Cx^2$ is convex on 
$\RR$.
From \eqref{ds=dtheta} the function $\vartheta \to h(\vartheta)+\frac12\vartheta^2\max h$ is convex on $\RR$, 
thus $h$ is semi convex. In the case that  $K$ is an arbitrary convex body, by approximation arguments with $C^2_+$ convex bodies,
see \cite{Schn},
 it follows that the support function of  $K$ is also semi convex.
 As consequence $h$ is Lipschitz continuous, it  has left (right) derivative $\dot{h}_{-}$ (respectively $\dot{h}_{+}$)
at each point, which is left (right) continuous. 
Moreover at each point the right limit of $\dot{h}_{-}$ is $\dot{h}_{+}$ and 
the left limit of $\dot{h}_{+}$ is $\dot{h}_{-}$, see \cite[pp. 228]{Rock}.

It is not difficult to show (from \eqref{supfunctionplane}, with a right limit argument) 
that for an arbitrary convex body,  for 
$ \vartheta \in \RR$, the formula
\begin{equation}\label{supfunctionplane+}
 x_l(s_{l+}(\vartheta))=h(\vartheta){\bf n}_{\vartheta}+\dot{h}_+(\vartheta){\bf t}_{\vartheta}
 \end{equation}
 holds. Similarly the formula
 \begin{equation}
 \label{supfunctionplane-}
  x_r(s_{r-}(\vartheta))=h(\vartheta){\bf n}_{\vartheta}+\dot{h}_-(\vartheta){\bf t}_{\vartheta}
 \end{equation}
holds.

If $\pa K$ is not strictly convex at the direction $\theta=(\cos \vartheta, \sin \vartheta)$
then $h$ is not differentiable at $\vartheta$ and 
\begin{equation}\label{Ftheta}
 \dot{h}_+(\vartheta)-\dot{h}_-(\vartheta)=|x_l(s_{l+}(\vartheta))-x_r(s_{r-}(\vartheta))|=|F(\theta) |.
\end{equation}
If $x_1,x_2\in \pa  K$ let us define  $arc^+(x_1,x_2)$ the set of points of $\pa K$ between 
  $x_1$ and $x_2$ according to the counterclockwise orientation of $\pa K$, and  $arc^-(x_1,x_2)$ the set 
  of points between $x_1$ and $x_2$, according to the 
  clockwise orientation; 
  $|arc^\pm(x_1,x_2)|$ denote their  length.
  \begin{remark}\label{snconverge} It is well known that a sequence of convex body $K^{(n)} $ converges to $K$ if and only if the corresponding 
 sequence of support functions converges in the uniform norm, see \cite[pp. 66]{Schn}. 
 Moreover as the two sequences of the end points of a closed connected arc of $\pa K^{(n)}$ converge,
 then the sequence of the corresponding arcs converges to a connected arc of $\pa K$ and the sequence of
 the corresponding lengths converges too.   
  \end{remark}

\begin{proposition} Let $K$ be a convex body and $h$ its support function, then
\begin{eqnarray}\label{vaules+conh}
 s_{l+}(\vartheta)-s_{l+}(\vartheta_0)=\int_{\vartheta_0}^{\vartheta}h(\tau)\, d\tau +\left(\dot{h}_
 +(\vartheta)-\dot{h}_+(\vartheta_0)
 \right), \quad \forall \vartheta \geq  \vartheta_0;\\
\label{vaules-conh}
 s_{r-}(\vartheta_0)-s_{r-}(\vartheta)=\int_{\vartheta_0}^{\vartheta}h(\tau)\, d\tau 
 +\left(\dot{h}_-(\vartheta)-\dot{h}_-(\vartheta_0)
 \right), \quad \forall \vartheta \leq  \vartheta_0.
\end{eqnarray} 
\end{proposition}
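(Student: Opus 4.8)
The plan is to prove the two identities \eqref{vaules+conh} and \eqref{vaules-conh} by first establishing them for $C^2_+$ convex bodies, where all functions in sight are smooth, and then passing to the general case by approximation. By symmetry (the clockwise parametrization of $\pa K$ is the counterclockwise parametrization with the orientation reversed, and the relation $x_l(s_0+s)=x_r(s_0+L-s)$ recorded above), the second identity follows from the first applied to a reflected copy of $K$, so I would focus on \eqref{vaules+conh} and only remark on the adjustments needed for \eqref{vaules-conh}.

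For the $C^2_+$ case: here $h\in C^2$, $\pa K$ is strictly convex in every direction so $s_{l+}=s_{r-}$ is a genuine $C^1$ function of $\vartheta$, and $\dot h_+=\dot h_-=\dot h$. Formula \eqref{ds=dtheta} gives $ds = (h(\vartheta)+\ddot h(\vartheta))\,d\vartheta$, hence
\begin{equation*}
s_{l+}(\vartheta)-s_{l+}(\vartheta_0)=\int_{\vartheta_0}^{\vartheta}\bigl(h(\tau)+\ddot h(\tau)\bigr)\,d\tau=\int_{\vartheta_0}^{\vartheta}h(\tau)\,d\tau+\bigl(\dot h(\vartheta)-\dot h(\vartheta_0)\bigr),
\end{equation*}
which is exactly \eqref{vaules+conh} in this smooth setting. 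This is just the fundamental theorem of calculus applied to the $C^1$ function $\dot h$.

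For the general case I would invoke Remark \ref{snconverge}: choose a sequence $K^{(n)}$ of $C^2_+$ convex bodies with support functions $h^{(n)}\to h$ uniformly. One must be slightly careful to line up the basepoints: arrange $x_0^{(n)}\to x_0$ on $\pa K^{(n)}$ with $\theta_0\in G^{(n)}(x_0^{(n)})$ and keep $\vartheta_0$ fixed. By Remark \ref{snconverge} the arc of $\pa K^{(n)}$ from the base direction $\vartheta_0$ to direction $\vartheta$ converges to the corresponding arc of $\pa K$, hence its length $s_{l+}^{(n)}(\vartheta)-s_{l+}^{(n)}(\vartheta_0)$ converges to $s_{l+}(\vartheta)-s_{l+}(\vartheta_0)$; here one should use that $s_{l+}$ is defined as a \emph{supremum} over the relevant 1-face, which is the right choice to get convergence to the correct endpoint of the limiting arc. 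On the right-hand side, $\int_{\vartheta_0}^{\vartheta}h^{(n)}\to\int_{\vartheta_0}^{\vartheta}h$ by uniform convergence, and $\dot h^{(n)}_+(\vartheta)\to\dot h_+(\vartheta)$, $\dot h^{(n)}_+(\vartheta_0)\to\dot h_+(\vartheta_0)$ because of the semiconvexity structure recalled in \S\ref{2.2}: uniform convergence of semiconvex functions forces convergence of the right derivatives at each point (right derivatives of uniformly convergent semiconvex functions converge to the right derivative of the limit, using monotonicity of difference quotients and right continuity of $\dot h_+$). Passing to the limit in the identity for $K^{(n)}$ then yields \eqref{vaules+conh} for $K$.

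The main obstacle I anticipate is the convergence $\dot h^{(n)}_+(\vartheta)\to\dot h_+(\vartheta)$ at the (possibly singular) directions $\vartheta$ and $\vartheta_0$: uniform convergence of convex (or semiconvex) functions only guarantees convergence of derivatives at points of differentiability of the limit, so at a vertex direction one must argue more carefully — e.g. by sandwiching $\dot h^{(n)}_+(\vartheta)$ between difference quotients $\frac{h^{(n)}(\vartheta+\delta)-h^{(n)}(\vartheta)}{\delta}$ and using that these converge and then letting $\delta\to 0^+$, invoking right continuity of $\dot h_+$. An alternative that sidesteps this entirely is to establish \eqref{vaules+conh} directly for general $K$ by viewing $s_{l+}(\vartheta)-\int_{\vartheta_0}^{\vartheta}h$ and $\dot h_+(\vartheta)$ both as cadlag functions of $\vartheta$ with the same jumps (at a direction $\vartheta$ where $\pa K$ is not strictly convex, the arc $s_{l+}$ jumps by $|F(\theta)|$, which by \eqref{Ftheta} equals $\dot h_+(\vartheta)-\dot h_-(\vartheta)$, the jump of $\dot h_+$), and the same absolutely continuous part $h\,d\vartheta$ coming from \eqref{ds=dtheta} read distributionally; matching the decompositions of these two functions of bounded variation then gives the identity. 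I would present the approximation argument as the main proof and mention the BV decomposition as the conceptual reason it works.
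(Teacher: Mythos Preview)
Your approximation route has a genuine gap that your proposed workaround does not close. Uniform convergence of semiconvex functions $h^{(n)}\to h$ gives only
\[
\dot h_-(\vartheta)\le \liminf_n \dot h^{(n)}(\vartheta)\le \limsup_n \dot h^{(n)}(\vartheta)\le \dot h_+(\vartheta),
\]
and no more: think of $h(\tau)=|\tau|$ approximated by $h^{(n)}(\tau)=\sqrt{\tau^2+1/n^2}$, where $\dot h^{(n)}(0)=0$ for every $n$ while $\dot h_+(0)=1$. Your sandwiching argument yields the upper bound $\limsup_n\dot h^{(n)}_+(\vartheta)\le\dot h_+(\vartheta)$, but the lower bound coming from left difference quotients only reaches $\dot h_-(\vartheta)$; right continuity of $\dot h_+$ does not help, since it is a statement about $\dot h_+(\vartheta+\delta)$ as $\delta\downarrow 0$, not about $\dot h^{(n)}_+(\vartheta)$. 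The same defect hits the left-hand side: at a $1$-face direction there is no reason the arclengths $s^{(n)}_{l+}(\vartheta)$ of the smooth approximants should converge to the \emph{supremum} endpoint $s_{l+}(\vartheta)$ rather than to some interior point of the face. So the limit of the smooth identity need not be \eqref{vaules+conh} at those $\vartheta$.

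The paper bypasses approximation entirely and argues directly for general $K$. It starts from the distributional identity $ds_{l+}=(h+\ddot h)\,d\vartheta$, i.e.
\[
-\int_{\RR} s_{l+}\,\dot\phi
=\int_{\RR}(\phi+\ddot\phi)\,h
\qquad\forall\,\phi\in C^\infty_0(\RR),
\]
which is the one-dimensional surface area measure relation and holds for every plane convex body. Since $h$ is Lipschitz, one integration by parts turns the right-hand side into $-\int_{\RR}\dot\phi\bigl(\int_0^{\cdot}h+\dot h\bigr)$, so $s_{l+}$ and $\int_0^{\cdot}h+\dot h$ differ by a constant almost everywhere. Both $s_{l+}$ and $\dot h_+$ are cadlag, so taking right limits upgrades this to
\[
s_{l+}(\eta)=c+\int_0^{\eta}h+\dot h_+(\eta)\qquad\text{for every }\eta,
\]
and subtracting the values at $\vartheta$ and $\vartheta_0$ gives \eqref{vaules+conh}. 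This is precisely the ``BV decomposition'' alternative you sketched at the end, made rigorous via the weak formulation; that alternative, not the approximation argument, is the one you should present as the proof.
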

\begin{proof}
 For every convex body $K$ not reduced to a point the function $\vartheta \to s_{l+}(\vartheta)$ 
is  defined everywhere and satisfies the weak form of \eqref{ds=dtheta}, namely:
\begin{equation}\label{weakds=dthaeta}
 -\int_{\RR} s_{l+}(\eta)\dot{\phi}(\eta)d\eta=
 \int_{\RR}(\phi +\ddot{\phi})(\eta) h(\eta) d\eta, \quad \forall \phi \in {C}^\infty_0(\RR).
 \end{equation}
Using the fact that $\vartheta \to h(\vartheta)$ is Lipschitz continuous, integrating by parts 
\eqref{weakds=dthaeta}, the formula 
\begin{equation}\label{weakds=dthaeta+}
  -\int_{\RR} s_{l+}(\eta)\dot{\phi}(\eta)d\eta=
  -\int_{\RR}\dot{\phi}(\eta)\left(\int_0^\eta h(\tau)d \tau +\dot{h}(\eta)\right) d \eta ,\quad \forall \phi \in {C}^\infty_0(\RR)
\end{equation}
holds. Thus
$$s_{l+}(\eta)=c +\int_0^\eta h(\tau)d \tau +\dot{h}(\eta) , \quad  \mbox{a.e.} $$
with $c$ constant.
Passing to the right limit, the equality
$$s_{l+}(\eta)=c+\int_0^{\eta} h(\tau)d \tau +\dot{h}_+(\eta), \quad \forall \eta \in \RR$$
holds.
The formula \eqref{vaules+conh} follows, by computing $s_{l+}(\vartheta)-s_{l+}(\vartheta_0)$, using the previous equality.
Similarly  \eqref{vaules-conh} is proved.
\end{proof}

\section{Involutes of a closed convex curve}
\begin{figure}[htb]
\epsfig{file=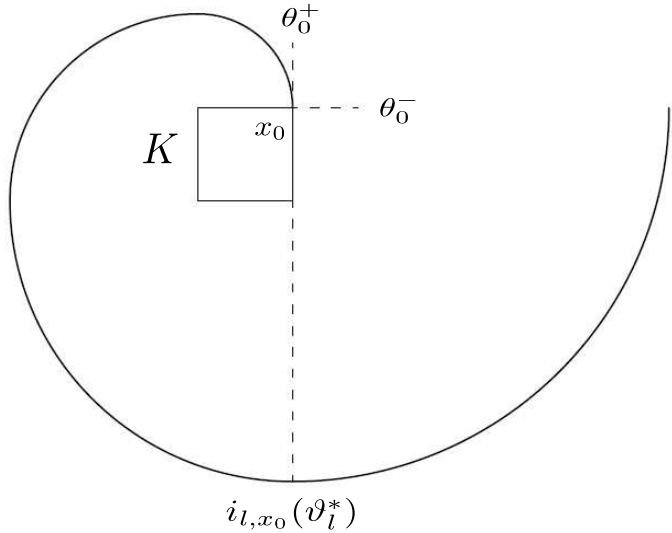, width=10cm} \caption{Left involute of a square } \label{square}

\end{figure}

\begin{definition}Let $I$ be an interval.
 A plane curve $ I\ni t \to  x(t) $ is convex if at every point $ x $
 it has right tangent vector $T^+(x)$ and
 $\arg(T^+(x(t))$ is a not decreasing function.
\end{definition}
Let $s \to x(s)$ be the arc length parameterization of a smooth curve;
the classical definition of involute  
starting at a point $x_0=x(s_0)$ of the curve $x(\cdot)$  is
\begin{equation}\label{clasdefinv}
 i(s)=x(s)-(s-s_0)x'(s) \quad s \geq s_0. 
\end{equation}

Let us notice that $s$ is the arc length of the curve, not of the involute; if $s_0=0$, then the starting point of 
the involute coincides with the starting point of the curve.
It is easy to construct  an involute of a convex polygonal line (even if the classical  definition \eqref{clasdefinv}
does  not work) by using arcs of circle centered at its corner points; moreover
the involute depends on the orientation of the curve.

In this section,  involutes for the boundary of an arbitrary  plane  convex body $K$, not reduced to a point,
will be defined. The assumption that $K$ is an arbitrary convex body is needed to work with the involutes of the 
convex sets, not smooth, studied in \S \ref{fences}.

Let $K\in C^2_+$; let $x_0$ be a fixed point of $\pa K$, $s \to  x(s) $ can be   the clockwise  parameterization of $\pa K$ or the
counterclockwise parameterization.
Since there exist
two orientations, then two different involutes have to be considered. 
As noted previously one can assume that the parameterizations of $\pa K$ have been chosen
so that  $x_0=x_l(s_0)=x_r(s_0)$.

\begin{definition} Let us denote by $i_{l,x_0}$ the left involute of $\pa K$ starting at $x_0$ corresponding  to 
the counterclockwise parameterization  of $\pa K$, by $i_{r,x_0}$ 
the right involute
 corresponding to the clockwise parameterization.
  When one  needs to emphasize   the  dependence on $K$ of  involutes, they  will be written as $i^K_{l,x_0},i^K_{r,x_0}$.
  \end{definition}
  \begin{rem}
 Let us notice that if $\rho$ is a plane reflection with respect to a fixed axis then 
 $$i^K_{r,x_0}=\rho(i^{\rho(K)}_{l,\rho(x_0)}).$$
 This relation allows us to prove our results for the 
 left involutes only and to state without proof the analogous results for the right involutes.
 \end{rem}

\begin{theorem}\label{theoreminvtheta}Let us fix the initial parameters $x_0,s_0,\theta_0,\vartheta_0$.
The left and the right involutes of a plane convex curve starting at   $x_0 \in \pa K$, 
boundary of a $C^2_+$ plane convex body $K$ with support function $h$, 
are parameterized by the value $\vartheta$ 
related to the outer
normal ${\bf n}_{\vartheta}$ to $K$, as follows
\begin{eqnarray}\label{parinvtheta}
 i_{l,x_0}(\vartheta)=h(\vartheta){\bf n}_{\vartheta}-
 \left( \int_{\vartheta_0}^{\vartheta}h(\tau)d\tau- \dot{h}(\vartheta_0)\right){\bf t}_{\vartheta}, 
 \mbox{\quad for \quad} \vartheta\geq \vartheta_0, \\
  i_{r,x_0}(\vartheta)=h(\vartheta){\bf n}_{\vartheta}-
 \left( \int_{\vartheta_0}^{\vartheta}h(\tau)d\tau- \dot{h}(\vartheta_0)\right){\bf t}_{\vartheta}, 
 \mbox{\quad for \quad} \vartheta\leq \vartheta_0. \label{parinvtheta_r}
\end{eqnarray} 
\end{theorem}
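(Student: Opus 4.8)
The plan is to start from the classical definition \eqref{clasdefinv} of the involute and rewrite it using the support-function parameterization of $\pa K$. Since $K\in C^2_+$, formula \eqref{ds=dtheta} gives $ds=(h+\ddot h)\,d\vartheta$, and \eqref{supfunctionplane} gives the point on $\pa K$ with outer normal ${\bf n}_\vartheta$ as $x(\vartheta)=h(\vartheta){\bf n}_\vartheta+\dot h(\vartheta){\bf t}_\vartheta$. First I would differentiate this expression for $x(\vartheta)$ with respect to $\vartheta$: using $\dot{\bf n}_\vartheta={\bf t}_\vartheta$ and $\dot{\bf t}_\vartheta=-{\bf n}_\vartheta$, one gets $x'(\vartheta)=(h+\ddot h)\,{\bf t}_\vartheta$. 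Hence the unit tangent to $\pa K$ (counterclockwise) is exactly ${\bf t}_\vartheta$, consistent with ${\bf n}_\vartheta$ being the outer normal. This identifies the quantity $x'(s)$ in \eqref{clasdefinv} — the derivative with respect to arc length — as the unit vector ${\bf t}_\vartheta$.

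Next I would express the arc-length gap $s-s_0$ appearing in \eqref{clasdefinv} in terms of $\vartheta$. Taking the counterclockwise parameterization $x_l$, we have $s=s_l(\vartheta)$ and, integrating \eqref{ds=dtheta} from $\vartheta_0$ to $\vartheta$,
\begin{equation*}
s-s_0=\int_{\vartheta_0}^{\vartheta}\big(h(\tau)+\ddot h(\tau)\big)\,d\tau=\int_{\vartheta_0}^{\vartheta}h(\tau)\,d\tau+\dot h(\vartheta)-\dot h(\vartheta_0).
\end{equation*}
Substituting this, together with $x'(s)={\bf t}_\vartheta$ and $x(s)=h(\vartheta){\bf n}_\vartheta+\dot h(\vartheta){\bf t}_\vartheta$, into $i(s)=x(s)-(s-s_0)x'(s)$ yields
\begin{equation*}
i_{l,x_0}(\vartheta)=h(\vartheta){\bf n}_\vartheta+\dot h(\vartheta){\bf t}_\vartheta-\Big(\int_{\vartheta_0}^{\vartheta}h(\tau)\,d\tau+\dot h(\vartheta)-\dot h(\vartheta_0)\Big){\bf t}_\vartheta,
\end{equation*}
and the $\dot h(\vartheta){\bf t}_\vartheta$ terms cancel, leaving precisely \eqref{parinvtheta}. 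The right involute \eqref{parinvtheta_r} follows by the same computation using the clockwise parameterization $x_r$ (for which the tangent direction and the sign of the arc-length increment both reverse, so the two sign changes cancel and the formula is formally identical, now valid for $\vartheta\le\vartheta_0$), or alternatively by invoking the reflection remark $i^K_{r,x_0}=\rho(i^{\rho(K)}_{l,\rho(x_0)})$ preceding the theorem.

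The computation itself is routine; the one point that needs a little care — and which I regard as the main (minor) obstacle — is bookkeeping the orientation conventions so that the arc-length variable $s$ in \eqref{clasdefinv} is matched to the correct covering parameterization ($x_l$ versus $x_r$) and the correct sign of $d\vartheta$, so that in both cases one lands on the same closed-form expression with the stated range of $\vartheta$. Here the normalization $x_0=x_l(s_0)=x_r(s_0)$ fixed before the statement, and the choice $\theta_0\in G(x_0)$, $\vartheta_0\in\stackrel{-1}{P}(\theta_0)$, are exactly what make the starting point of each involute coincide with $x_0$ (since at $\vartheta=\vartheta_0$ the integral vanishes and $i_{l,x_0}(\vartheta_0)=h(\vartheta_0){\bf n}_{\vartheta_0}+\dot h(\vartheta_0){\bf t}_{\vartheta_0}=x(\theta_0)=x_0$), so it is worth remarking on this consistency check explicitly.
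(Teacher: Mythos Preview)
Your proof is correct and follows essentially the same route as the paper: both start from the classical definition \eqref{clasdefinv}, integrate \eqref{ds=dtheta} to obtain $s-s_0=\int_{\vartheta_0}^{\vartheta}h(\tau)\,d\tau+\dot h(\vartheta)-\dot h(\vartheta_0)$, identify $x'(s)={\bf t}_\vartheta$, and substitute using \eqref{supfunctionplane}. The paper handles the right involute by invoking \eqref{supfunctionplane-} and \eqref{vaules-conh} (the clockwise analogues), which is exactly your first suggested option; your proposal simply spells out the intermediate computations in more detail.
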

\begin{proof} In the present case there is a 1-1 mapping between $\vartheta$ and $s$; from \eqref{ds=dtheta},  it follows
$$s-s_0=\int_{\vartheta_0}^{\vartheta}h(\tau)d\tau +\dot{h}(\vartheta)-\dot{h}(\vartheta_0);$$
then, changing the variable $s$ with $\vartheta$ in \eqref{clasdefinv}, with elementary computation, 
\eqref{parinvtheta} is obtained (since
$x'(s)={\bf t}_{\vartheta}$ and \eqref{supfunctionplane} holds). Formula \eqref{parinvtheta_r} follows from
\eqref{supfunctionplane-} and \eqref{vaules-conh}. \end{proof}

For an arbitrary convex body $K$ in place of \eqref{supfunctionplane}, formulas 
\eqref{supfunctionplane+},\eqref{supfunctionplane-} have to be used.
\begin{definition}\label{theoreminvthetaest} Let $K$ be a plane convex body, 
let 
$$x_0=x(s_0) \in \pa K,  \vartheta_0^+:=\vartheta_{l+}(s_0), s_0^+:=s_{l+}(\vartheta_0^+).$$
The left involute of  $\pa K$  starting at $x_0$ will be defined as
\begin{equation}\label{defleftinvolutegeneral}
        i_{l,x_0}(\vartheta)=x_l(s_{l+}(\vartheta))-
     \left(s_{l+}(\vartheta)- s_0\right){\bf t}_\vartheta \quad \mbox{for} \quad \vartheta \geq \vartheta_0^+;
           \end{equation} 
Similarly if $\vartheta_0^-:=\vartheta_{r-}(s_0)$, $s_0^-:=s_{r-}(\vartheta_0^-)$,
the right involute starting at $x_0$ will be defined as
\begin{equation}\label{defrightinvolutegeneral}
        i_{r,x_0}(\vartheta)=x_r(s_{r-}(\vartheta))+
     \left(s_{r-}(\vartheta)- s_0\right){\bf t}_\vartheta \quad \mbox{for} \quad \vartheta \leq \vartheta_0^-.
           \end{equation} 
\end{definition}
 From \eqref{defleftinvolutegeneral}, \eqref{vaules+conh} it follows that
\begin{equation}\label{parinvtheta+}
 i_{l,x_0}(\vartheta)=h(\vartheta){\bf n}_{\vartheta}-
 \left( \int_{\vartheta_0^+}^{\vartheta}h(\tau)d\tau- \dot{h_+}(\vartheta_0^+)\right){\bf t}_{\vartheta}-
 |x_0-x_l(s_0^+)|{\bf t}_{\vartheta},
 \quad \vartheta \geq \vartheta_0^+;
\end{equation} 
Similarly from \eqref{defrightinvolutegeneral}, \eqref{vaules-conh} it follows that
\begin{equation}\label{parinvtheta-}
 i_{r,x_0}(\vartheta)=h(\vartheta){\bf n}_{\vartheta}-
 \left( \int^{\vartheta}_{\vartheta_0^-}h(\tau)d\tau -\dot{h_-}(\vartheta_0^-)\right){\bf t}_{\vartheta}
 + |x_0-x_r(s_0^-)|{\bf t}_{\vartheta},
 \quad \vartheta \leq \vartheta_0^-.
\end{equation}

Let us notice that in \eqref{parinvtheta+}, \eqref{parinvtheta-} 
the same parameter $ \vartheta$  is used, but with different range; it turns out that $i_l$ is 
counterclockwise oriented; instead $i_r$ is clockwise oriented;
$x_0=i_{l,x_0}(\vartheta_0^+)=i_{r,x_0}(\vartheta_0^-)$ .

\begin{rem}\label{i.ii,iii} The following facts can be derived from the above equations:
\begin{enumerate}
 \item[i)] since $h$ is Lipschitz continuous for every convex body $K$, then
 the involute $i_{l,x_0}$ is a rectifiable curve;
    \item [ii)] $i_{l,x_0}(\vartheta_0^+)=x_0$  and       
      \begin{equation}\label{remiv}
       |i_{l,x_0}(\vartheta)-x_l(s_{l+}(\vartheta))|=s_{l+}(\vartheta)-s_0;
      \end{equation}      
     \item[iii)] if $x$ is a vertex of $\pa K$ then 
 $i_{l,x_0}(\vartheta)$, for $(\cos\vartheta,\sin\vartheta) \in N_K(x)$, lies on an arc of circle centered at $x$ with 
 radius $s_{l+}(\vartheta)-s_0$ ; 
      \item[iv)]  the involute \eqref{parinvtheta} satisfies
\begin{equation}\label{parallelself}
 i_{l,x_0}(\vartheta+2\pi)=i_{l,x_0}(\vartheta)-L{\bf t}_{\vartheta}, \quad \forall \vartheta \geq\vartheta_0^+.
\end{equation}
\end{enumerate}
\end{rem}

\begin{lemma}\label{regi(theta)} The parameterization \eqref{parinvtheta+} of the involute  
$i_{l,x_0}$ is 1-1 in the interval 
$[\vartheta_0^+,\vartheta_0^++2\pi)$; 
moreover, except for at most a finite or countable set $\mathfrak{F}$ of  values $\vartheta_i$, 
$i=1,2,\ldots...$ (corresponding to the 1-faces $F_{\theta_i}$ of $\pa K$), $i_{l,x_0}$ is  differentiable and:
 \begin{equation}\label{di(theta)} 
  \frac{d}{d\vartheta}i_{l,x_0}(\vartheta) =\left(s_{l+}(\vartheta)- s_0\right){\bf n}_\vartheta\quad \mbox{for}\quad \vartheta 
  >\vartheta_0^+, \vartheta \not\in \mathfrak{F};
 \end{equation}
furthermore $i_{l,x_0}$   has left and right derivative with
   common direction ${\bf n}_{\vartheta}$ at $\vartheta= \vartheta_i\in \mathfrak{F}$.
          \end{lemma}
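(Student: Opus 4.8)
The plan is to work directly from the explicit formula \eqref{parinvtheta+} for $i_{l,x_0}$, together with the integral representation \eqref{vaules+conh} of $s_{l+}$, and to treat separately the smooth part and the vertex/face part of $\pa K$. First I would establish the differentiation formula \eqref{di(theta)}. Writing $i_{l,x_0}(\vartheta)=h(\vartheta){\bf n}_\vartheta + g(\vartheta){\bf t}_\vartheta$ where $g(\vartheta)=-\big(\int_{\vartheta_0^+}^{\vartheta}h(\tau)d\tau-\dot h_+(\vartheta_0^+)\big)-|x_0-x_l(s_0^+)|$, I note that $\dot{\bf n}_\vartheta={\bf t}_\vartheta$ and $\dot{\bf t}_\vartheta=-{\bf n}_\vartheta$; hence at any $\vartheta$ where $h$ is differentiable (which is all of $\RR$ except the countable set $\mathfrak{F}$ of directions carrying a $1$-face, by the semiconvexity discussion preceding the theorem and the identity \eqref{Ftheta}), one computes
\[
\frac{d}{d\vartheta}i_{l,x_0}(\vartheta)=\big(\dot h(\vartheta)+g(\vartheta)\big){\bf t}_\vartheta+\big(h(\vartheta)-\dot g(\vartheta)\big)(-{\bf n}_\vartheta)?,
\]
wait — carefully: $\frac{d}{d\vartheta}(h{\bf n})=\dot h{\bf n}+h{\bf t}$ and $\frac{d}{d\vartheta}(g{\bf t})=\dot g{\bf t}-g{\bf n}$, so the ${\bf t}$-coefficient is $\dot h+\dot g=\dot h + (-h)=0$ since $\dot g(\vartheta)=-h(\vartheta)$, and the ${\bf n}$-coefficient is $h-g$. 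By \eqref{vaules+conh}, $s_{l+}(\vartheta)-s_{l+}(\vartheta_0^+)=\int_{\vartheta_0^+}^{\vartheta}h\,d\tau+(\dot h_+(\vartheta)-\dot h_+(\vartheta_0^+))$, and $s_{l+}(\vartheta_0^+)-s_0=|x_0-x_l(s_0^+)|$ by \eqref{remiv}; substituting shows $h(\vartheta)-g(\vartheta)=s_{l+}(\vartheta)-s_0$, which is exactly \eqref{di(theta)}. The same substitution applied to the one-sided derivatives (using that $\dot h$ has left and right limits everywhere, with $\dot h_+$ right-continuous and $\dot h_-$ left-continuous, as quoted from \cite{Rock}) gives that at $\vartheta_i\in\mathfrak{F}$ the left derivative is $(s_{l+}(\vartheta_i^-)-s_0){\bf n}_{\vartheta_i}$ and the right derivative is $(s_{l+}(\vartheta_i)-s_0){\bf n}_{\vartheta_i}$, both nonzero (since $s_{l+}>s_0$ for $\vartheta>\vartheta_0^+$) and both pointing in the common direction ${\bf n}_{\vartheta_i}$, proving the last assertion.

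Next I would prove injectivity of \eqref{parinvtheta+} on $[\vartheta_0^+,\vartheta_0^++2\pi)$. The key observation is that $s_{l+}$ is strictly increasing on any interval of length $\le 2\pi$ starting at $\vartheta_0^+$ except that it is constant precisely on the $\vartheta$-intervals $G(x)$ associated to vertices $x$ of $\pa K$; on such an interval $i_{l,x_0}(\vartheta)$ runs along a circular arc of radius $r:=s_{l+}(\vartheta)-s_0>0$ centered at $x$ (item iii of Remark \ref{i.ii,iii}), and since that subinterval of $[\vartheta_0^+,\vartheta_0^++2\pi)$ has length strictly less than $2\pi$, the map $\vartheta\mapsto x+r\cdot(\text{unit vector})$ is injective there. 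Off the vertex intervals, suppose $i_{l,x_0}(\vartheta_1)=i_{l,x_0}(\vartheta_2)$ with $\vartheta_0^+\le\vartheta_1<\vartheta_2<\vartheta_0^++2\pi$. Using \eqref{remiv}, the point $i_{l,x_0}(\vartheta_j)$ lies at signed distance $s_{l+}(\vartheta_j)-s_0$ from $x_l(s_{l+}(\vartheta_j))\in\pa K$ along the inward normal direction $-{\bf t}_{\vartheta_j}$, i.e. it lies on the line through that boundary point with direction ${\bf t}_{\vartheta_j}$; if the two boundary feet differ, the two tangent lines of $\pa K$ are distinct (convexity, on an arc subtending less than a full turn) and meet in at most one point, forcing a contradiction with the distance/orientation data, while if the two feet coincide and $\vartheta_1\ne\vartheta_2$ we are in a $1$-face situation, where the normals ${\bf t}_{\vartheta_1},{\bf t}_{\vartheta_2}$ differ and the common foot determines the point uniquely only if $s_{l+}$-values agree — but then equality of the two offset points $h{\bf n}-(\dots){\bf t}$ with different ${\bf n},{\bf t}$ and equal scalar coefficients is impossible. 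Assembling the within-vertex-arc injectivity with the across-boundary injectivity, and checking the finitely/countably many matching points at the ends of vertex intervals via the one-sided derivative directions computed above, yields global injectivity on the half-open $2\pi$-interval.

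The main obstacle I anticipate is the bookkeeping in the injectivity argument at the "mixed" configurations — namely handling a $\vartheta_1$ lying inside a vertex arc and a $\vartheta_2$ lying on the smooth part (or inside a different vertex arc), and making the tangent-line intersection argument fully rigorous for a general (non-smooth, possibly with both vertices and $1$-faces) convex body $K$ rather than a $C^2_+$ one; one must be careful that the offset parameter $s_{l+}(\vartheta)-s_0$ is genuinely positive throughout $(\vartheta_0^+,\vartheta_0^++2\pi)$ and that the endpoint $\vartheta=\vartheta_0^++2\pi$ is genuinely excluded because there \eqref{parallelself} gives $i_{l,x_0}(\vartheta_0^++2\pi)=i_{l,x_0}(\vartheta_0^+)-L{\bf t}_{\vartheta_0^+}=x_0-L{\bf t}_{\vartheta_0^+}\ne x_0$. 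The differentiation part, by contrast, is a routine computation once \eqref{vaules+conh} and \eqref{remiv} are in hand, as sketched above; I would present it first and use \eqref{di(theta)} (which shows the velocity never vanishes and always points along ${\bf n}_\vartheta$, a direction that makes a net rotation of $2\pi$ over the interval) as an ingredient that streamlines the injectivity proof.
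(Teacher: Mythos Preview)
Your approach for the derivative formula --- differentiate \eqref{parinvtheta+} term by term, cancel the tangential component, and identify the normal component with $s_{l+}(\vartheta)-s_0$ via \eqref{vaules+conh} --- is exactly the paper's; its proof consists of two lines invoking precisely those two formulas. However, your collection of coefficients contains a slip: from $\frac{d}{d\vartheta}(h\,{\bf n})=\dot h\,{\bf n}+h\,{\bf t}$ and $\frac{d}{d\vartheta}(g\,{\bf t})=\dot g\,{\bf t}-g\,{\bf n}$ the ${\bf t}$-coefficient is $h+\dot g$ (not $\dot h+\dot g$) and the ${\bf n}$-coefficient is $\dot h-g$ (not $h-g$). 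With the correct collection, $h+\dot g=h-h=0$ holds, and it is $\dot h(\vartheta)-g(\vartheta)$ that equals $s_{l+}(\vartheta)-s_0$ by \eqref{vaules+conh} together with $|x_0-x_l(s_0^+)|=s_0^+-s_0$. So the argument is sound once the $h\leftrightarrow\dot h$ swap is repaired; as written, the claimed identity $h-g=s_{l+}-s_0$ is false in general. The treatment of one-sided derivatives at $\vartheta_i\in\mathfrak F$ is then correct and matches the paper.

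On injectivity: the paper's proof does not address this part of the statement at all, so your sketch already goes beyond what the paper supplies. The strategy you outline (circular-arc injectivity on vertex intervals via Remark~\ref{i.ii,iii}(iii), tangent-line/foot-point arguments elsewhere, glued using the nonvanishing velocity in direction ${\bf n}_\vartheta$) is reasonable; the obstacle you flag --- the mixed vertex/smooth configurations --- is the only place where the argument still needs to be made precise.
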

 \begin{proof}
  By differentiating \eqref{parinvtheta+} and using \eqref{vaules+conh}, the equality  \eqref{di(theta)} is proved.
  Similar argument, at  $\vartheta=\vartheta_i \in \mathfrak{F}$, proves that ${\bf n}_{\vartheta}$ 
  is the common direction of the left and right derivatives.
 \end{proof}
 \begin{rem}\label{parallel}
 Let $\vartheta \to i_{l,x_1}(\vartheta),\vartheta \to i_{l,x_2}(\vartheta),$ $x_i=x(s_i)$, $i=1,2$
 be left involutes of $K$. Since 
     $$i_{l,x_2}(\vartheta)-i_{l,x_1}(\vartheta)=(s_2-s_1){\bf t}_{\vartheta}, 
     \quad \mbox{for} \quad \vartheta >\max\{\vartheta^+_l(s_2), \vartheta^+_l(s_1)\},$$
     then they will be called    {\em  parallel} curves. Moreover, by \eqref{parallelself},   $i_{l,x_0}(\vartheta)$ and $i_{l,x_0}(\vartheta+2\pi)$
will also be called parallel.\end{rem}
\begin{theorem}\label{regi(sigma)}
 If  $d\sigma$ is the  arc element of the involute $i_{l,x_0}$ then $\vartheta \to \sigma(\vartheta)$ is 
 continuous  and invertible  in $\vartheta \geq \vartheta_0^+$ with continuous inverse $[0, +\infty)\ni \sigma \to \vartheta(\sigma)$. Moreover
 \begin{equation}\label{dsigma} 
  d\sigma =\left(s_{l+}(\vartheta)-s_0\right) d \vartheta  \quad  \mbox{for} 
  \quad\vartheta \geq \vartheta_0^+,\, \vartheta\not\in \mathfrak{F};
 \end{equation} 
 the  involute is a convex curve with positive curvature a.e.
\begin{equation}\label{curvatureinv} 
 \frac{d\vartheta}{d \sigma}=\frac1{(s_{l+}(\vartheta)-s_0)}\quad  \mbox{for} \quad\vartheta > \vartheta_0^+,\,
 \vartheta\not\in \mathfrak{F},
\end{equation}
   $\sigma \to i_{l,x_0}(\vartheta(\sigma))$ is $C^1$ everywhere and 
  \begin{equation}\label{di/dsigma} 
  \frac{d}{d\sigma}i_{l,x_0} ={\bf n}_{\vartheta(\sigma)}.
 \end{equation} 
 
Moreover the following properties hold.
\begin{enumerate}
 \item[i)] For every $\sigma >0$ the right derivative
 $$(\frac{d \vartheta}{d \sigma})^+=\frac{1}{s_{l+}(\vartheta(\sigma))-s_0}$$
 exists everywhere and it is a decreasing  cadlag function;
\item[ii)] $\frac{d}{d \sigma}i_{l,x_0}$ has everywhere right  derivative given by
$$(\frac{d^2}{d \sigma^2}i_{l,x_0})^{+}=- \frac{1}{s_{l+}(\vartheta(\sigma))-s_0}{\bf t}_{\vartheta(\sigma)}.$$
\end{enumerate}
 \end{theorem}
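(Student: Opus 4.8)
The plan is to establish the parameterization formula for $\sigma$, then bootstrap the remaining regularity and curvature statements from Lemma~\ref{regi(theta)} together with the cadlag properties of $s_{l+}$ already recorded in \S 2.

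\emph{Step 1: the arc element.} By Lemma~\ref{regi(theta)} we have $\frac{d}{d\vartheta}i_{l,x_0}(\vartheta)=(s_{l+}(\vartheta)-s_0){\bf n}_\vartheta$ for $\vartheta>\vartheta_0^+$ outside the countable set $\mathfrak{F}$, and $|{\bf n}_\vartheta|=1$; hence $|\frac{d}{d\vartheta}i_{l,x_0}(\vartheta)|=s_{l+}(\vartheta)-s_0$, which is nonnegative and, by \eqref{remiv}, strictly positive for $\vartheta>\vartheta_0^+$ (since $s_{l+}(\vartheta)>s_{l+}(\vartheta_0^+)\ge s_0$ once $\vartheta$ exceeds $\vartheta_0^+$; if $s_0^+\ne s_0$ one uses the extra term in \eqref{parinvtheta+}). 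Because $i_{l,x_0}$ is rectifiable (Remark~\ref{i.ii,iii}(i)) the arc length $\sigma(\vartheta)=\int_{\vartheta_0^+}^{\vartheta}(s_{l+}(\tau)-s_0)\,d\tau$ is a well-defined, continuous, strictly increasing function of $\vartheta$ on $[\vartheta_0^+,\infty)$, which gives \eqref{dsigma} and the existence of a continuous inverse $\sigma\mapsto\vartheta(\sigma)$ on $[0,\infty)$. Strict monotonicity also forces the parameterization \eqref{parinvtheta+} to be injective on $[\vartheta_0^+,\vartheta_0^++2\pi)$ — indeed between any two parameter values with the same image the tangent ${\bf n}_\vartheta$ would have to point in two opposite directions, impossible on an arc of length less than $2\pi$ in the $\vartheta$-variable — as asserted at the start of Lemma~\ref{regi(theta)}.

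\emph{Step 2: the $C^1$ reparameterization and curvature.} Since $\vartheta(\sigma)$ is continuous, $\frac{d}{d\sigma}i_{l,x_0}=\frac{d}{d\vartheta}i_{l,x_0}\cdot\frac{d\vartheta}{d\sigma}={\bf n}_{\vartheta(\sigma)}$ at all $\sigma$ corresponding to $\vartheta\notin\mathfrak{F}$, and the right-hand side extends continuously in $\sigma$ because $\vartheta\mapsto{\bf n}_\vartheta$ is continuous; this proves \eqref{di/dsigma} everywhere and shows $\sigma\mapsto i_{l,x_0}(\vartheta(\sigma))$ is $C^1$. Inverting \eqref{dsigma} away from $\mathfrak{F}$ gives \eqref{curvatureinv}, $\frac{d\vartheta}{d\sigma}=1/(s_{l+}(\vartheta)-s_0)>0$; since $\arg({\bf n}_{\vartheta(\sigma)})=\vartheta(\sigma)$ is nondecreasing in $\sigma$, the curve is convex, and the curvature $\frac{d\vartheta}{d\sigma}$ is positive for a.e.\ $\sigma$ (the exceptional set $\mathfrak{F}$ being countable, and $\vartheta(\cdot)$ being Lipschitz so that it carries null sets to null sets, or more simply since the ``flat'' $\sigma$-intervals over $\mathfrak{F}$ are where the curve is a circular arc of positive radius, still positive curvature).

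\emph{Step 3: one-sided derivatives.} For (i), recall from \S 2 that $s_{l+}$ is increasing and cadlag, hence $\sigma\mapsto s_{l+}(\vartheta(\sigma))$ is increasing and, being a composition of an increasing cadlag function with a continuous increasing function, is again cadlag; therefore $\sigma\mapsto 1/(s_{l+}(\vartheta(\sigma))-s_0)$ is decreasing and cadlag, and it equals the right derivative $(\frac{d\vartheta}{d\sigma})^+$ everywhere because at jump points of $s_{l+}$ the involute follows a circular arc on which $\vartheta$ is affine in $\sigma$ with the stated right slope. For (ii), differentiate \eqref{di/dsigma}: ${\bf n}_{\vartheta(\sigma)}$ has right derivative ${\bf t}_{\vartheta(\sigma)}\cdot(\frac{d\vartheta}{d\sigma})^+$ up to sign; tracking the orientation (the involute turns so that the center of curvature lies in the direction $-{\bf t}_{\vartheta(\sigma)}$, consistent with $\frac{d}{d\sigma}i_{l,x_0}={\bf n}_{\vartheta(\sigma)}$ and the counterclockwise orientation) yields $(\frac{d^2}{d\sigma^2}i_{l,x_0})^+=-\frac{1}{s_{l+}(\vartheta(\sigma))-s_0}{\bf t}_{\vartheta(\sigma)}$.

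The main obstacle is Step 3: one must argue carefully that the right derivative in $\sigma$ really exists \emph{everywhere} — including at the (countably many) parameters sitting over a 1-face $F_{\theta_i}$, where $\sigma$ is strictly increasing but $\vartheta$ is constant, and over a vertex, where $\vartheta$ increases while the base point is frozen. In both degenerate regimes one checks by hand, using \eqref{parinvtheta+} and Remark~\ref{i.ii,iii}(iii), that the involute is locally a line segment or a circular arc and compute the one-sided derivatives directly; the cadlag bookkeeping of $s_{l+}$ from \S 2 is exactly what makes these one-sided limits match up globally.
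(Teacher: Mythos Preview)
The paper states this theorem without proof (it moves directly to Theorem~\ref{KntoK}), so there is no paper argument to compare against. Your approach---bootstrap everything from Lemma~\ref{regi(theta)} via the chain rule and the cadlag/monotonicity bookkeeping for $s_{l+}$---is the natural one, and the core of it (Steps~1--2 and the right-derivative computation in Step~3) is correct.

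Two points deserve cleaning up. First, you repeatedly conflate the two degenerate regimes. A value $\vartheta_i\in\mathfrak{F}$ corresponds to a $1$-\emph{face} of $\partial K$: there $s_{l+}$ \emph{jumps}, but the $\vartheta$-parameterization still assigns a single point of the involute to $\vartheta_i$, so there is no ``flat $\sigma$-interval'' and no circular arc there---only a jump in the \emph{magnitude} of $\frac{d}{d\vartheta}i_{l,x_0}$, the direction ${\bf n}_{\vartheta_i}$ being the same on both sides (Lemma~\ref{regi(theta)}). Conversely, a \emph{vertex} of $\partial K$ corresponds to an interval of $\vartheta$ on which $s_{l+}$ is constant; there the involute is indeed a circular arc (Remark~\ref{i.ii,iii}(iii)), but these $\vartheta$ are not in $\mathfrak{F}$. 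Your closing paragraph and the ``circular arc at jump points'' remark in Step~3 have these two pictures reversed, and the involute is never a line segment (its tangent ${\bf n}_\vartheta$ always rotates). Once this is straightened out, (i) follows simply because $\sigma(\vartheta)=\int_{\vartheta_0^+}^{\vartheta}(s_{l+}(\tau)-s_0)\,d\tau$ has everywhere a right derivative equal to $s_{l+}(\vartheta)-s_0$ (right continuity of $s_{l+}$), and one-sided inversion gives the claimed right derivative of $\vartheta(\sigma)$; no case analysis is needed.

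Second, in (ii) the honest chain-rule computation is $\frac{d}{d\vartheta}{\bf n}_\vartheta={\bf t}_\vartheta$, hence $(\frac{d^2}{d\sigma^2}i_{l,x_0})^{+}=+\,\frac{1}{s_{l+}(\vartheta(\sigma))-s_0}\,{\bf t}_{\vartheta(\sigma)}$. The minus sign in the displayed formula appears to be a misprint in the paper; your orientation discussion is a post-hoc attempt to match it rather than a derivation (the curve is convex with the tangent turning counterclockwise, so the curvature vector points to the \emph{left} of ${\bf n}_\vartheta$, i.e.\ along $+{\bf t}_\vartheta$).
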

\begin{theorem}\label{KntoK}
 Let  $K^{(n)} $ be a sequence of plane convex bodies which converges uniformly
to $K$, $x^{(n)}\in \pa K^{(n)}$, $x^{(n)}\to x_0$; then 
   the corresponding  sequences of 
 left  involutes $ i^{K^{(n)}}_{l,x^{(n)}}$ 
  converge  uniformly  to $ i_{l,x_0}$ in compact subsets  of $[\vartheta_0^+,+\infty]$; moreover  the corresponding sequence of
 their derivatives (with respect to the arc length) converges uniformly to  
 $\frac{d}{d\sigma}i_{l,x_0}$.
\end{theorem}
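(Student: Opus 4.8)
The plan is to use the explicit integral representation of the involutes given in \eqref{parinvtheta+}, together with the characterization of convergence of convex bodies via uniform convergence of support functions (Remark \ref{snconverge}). First I would set $h_n$ for the support function of $K^{(n)}$ and $h$ for that of $K$; by Remark \ref{snconverge}, $h_n\to h$ uniformly on $S^1$ (equivalently, uniformly in $\vartheta$ on $\RR$, since these are $2\pi$-quasiperiodic in the sense of \eqref{parallelself}). The point $x^{(n)}\to x_0$ forces the corresponding initial parameters to converge: writing $\vartheta_{0,n}^+$ for the initial angle of $i^{K^{(n)}}_{l,x^{(n)}}$, one checks from \eqref{supfunctionplane+} and the cadlag monotonicity of $s_{l+}$ that, up to the unavoidable ambiguity at a vertex of $\pa K$, one can choose $\vartheta_{0,n}^+\to\vartheta_0^+$; likewise the one-sided derivatives $\dot h_{n,+}(\vartheta_{0,n}^+)$ and the defect terms $|x^{(n)}-x^{(n)}_l(s^+_{0,n})|$ converge to $\dot h_+(\vartheta_0^+)$ and $0$ respectively. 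This handles the constant-in-$\vartheta$ data appearing in \eqref{parinvtheta+}.

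Next I would establish uniform convergence of the arc-length derivatives. By Lemma \ref{regi(theta)} and \eqref{di/dsigma}, the derivative of $i^{K^{(n)}}_{l,x^{(n)}}$ with respect to its own arc length is simply ${\bf n}_{\vartheta}$, reparametrized; more usefully, from \eqref{vaules+conh} the quantity $s_{l+}^{(n)}(\vartheta)-s_{0,n}$ equals $\int_{\vartheta_{0,n}^+}^{\vartheta}h_n(\tau)\,d\tau + \dot h_{n,+}(\vartheta)-\dot h_{n,+}(\vartheta_{0,n}^+)$. The integral term converges uniformly on compact $\vartheta$-intervals because $h_n\to h$ uniformly. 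The one-sided derivative term $\dot h_{n,+}(\vartheta)$ does not converge pointwise in general, but it only enters $i^{K^{(n)}}_{l,x^{(n)}}(\vartheta)$ through the coefficient of ${\bf t}_\vartheta$ in \eqref{parinvtheta+}, where it is cancelled: indeed \eqref{parinvtheta+} can be rewritten, using \eqref{supfunctionplane+}, as
\begin{equation*}
 i_{l,x_0}(\vartheta)=x_l(s_{l+}(\vartheta))-\Bigl(\int_{\vartheta_0^+}^{\vartheta}h(\tau)\,d\tau-\dot h_+(\vartheta_0^+)+|x_0-x_l(s_0^+)|\Bigr){\bf t}_\vartheta,
\end{equation*}
and the coefficient of ${\bf t}_\vartheta$ here is a continuous function of $\vartheta$ that converges uniformly on compacta. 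For the term $x^{(n)}_l(s^{(n)}_{l+}(\vartheta))$, I would invoke Remark \ref{snconverge}: as $K^{(n)}\to K$ and the relevant endpoints converge, the arcs of $\pa K^{(n)}$ converge to the corresponding arc of $\pa K$, so $x^{(n)}_l(s^{(n)}_{l+}(\vartheta))\to x_l(s_{l+}(\vartheta))$ at every continuity point, and uniformly on compacta by monotonicity plus a Dini-type argument (the limit is cadlag, and away from the at-most-countably-many jumps convergence is uniform; at jumps one uses that the jump sizes are controlled by $\dot h_+-\dot h_-$, which is small off a finite set). Combining these, $i^{K^{(n)}}_{l,x^{(n)}}(\vartheta)\to i_{l,x_0}(\vartheta)$ uniformly on compact subsets of $[\vartheta_0^+,\infty)$; convergence on $[\vartheta_0^+,+\infty]$ in the stated sense follows from the quasiperiodicity relation \eqref{parallelself}, which reduces the behaviour at $+\infty$ to that on one period.

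For the derivatives with respect to arc length, I would use \eqref{di/dsigma}: $\frac{d}{d\sigma}i_{l,x_0}={\bf n}_{\vartheta(\sigma)}$, so it suffices to show the reparametrization maps $\sigma\mapsto\vartheta^{(n)}(\sigma)$ converge uniformly on compacta to $\sigma\mapsto\vartheta(\sigma)$. By \eqref{dsigma}, $\sigma^{(n)}(\vartheta)=\int_{\vartheta_{0,n}^+}^{\vartheta}(s^{(n)}_{l+}(\tau)-s_{0,n})\,d\tau$, and the integrand converges uniformly on compacta (this is exactly the coefficient analyzed above, which is continuous and convergent); hence $\sigma^{(n)}\to\sigma$ uniformly on compacta, and passing to the continuous inverses (Theorem \ref{regi(sigma)}) gives $\vartheta^{(n)}(\sigma)\to\vartheta(\sigma)$ uniformly on compacta, whence ${\bf n}_{\vartheta^{(n)}(\sigma)}\to{\bf n}_{\vartheta(\sigma)}$ uniformly. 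I expect the main obstacle to be the bookkeeping around vertices and $1$-faces: the one-sided derivatives $\dot h_{n,\pm}$ genuinely fail to converge pointwise, and the initial parameter $\vartheta_{0,n}^+$ is only well defined up to the normal cone at $x_0$, so one must be careful to phrase everything in terms of the quantities that actually appear in \eqref{parinvtheta+} — the $x_l(s_{l+}(\cdot))$ values and the integral $\int h$ — all of which are continuous and well behaved, and to show the residual $\dot h_+$ terms cancel exactly rather than merely being small.
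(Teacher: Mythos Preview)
Your proposal is correct and follows essentially the same route as the paper: invoke Remark~\ref{snconverge} to get convergence of the $s_{l+}^{(n)}$, integrate via \eqref{dsigma} to obtain uniform convergence of the arc-length maps $\sigma^{(n)}(\vartheta)$, and then use \eqref{di/dsigma} to deduce convergence of the derivatives ${\bf n}_{\vartheta^{(n)}(\sigma)}$. The paper's own proof is in fact a three-line sketch that stops there, whereas you additionally work out the convergence of the involutes themselves from \eqref{parinvtheta+} and are careful about the cadlag issues around $\dot h_{\pm}$ and the initial parameters; this extra care is warranted but not something the paper spells out.
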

\begin{proof} By Remark \ref{snconverge} the sequence of functions $s_{l+}^n$ converge to $s_{l+}$.
 From \eqref{dsigma} 
 the arclengths of the left involutes $ i^{K^{(n)}}_{l,x^{(n)}}$ 
 $$\sigma^{(n)}(\vartheta)=\int_{\vartheta_0}^{\vartheta} \left(s_{l+}^{(n)}(\vartheta)-s^{(n)}_0\right) d \vartheta  $$
 converges uniformly in compact substes of $[\vartheta_0^+, +\infty)$ to  the arc length $\sigma(\vartheta)$
 of $i_{l,x_0}$;  from \eqref{di/dsigma} the same fact holds  for  their derivatives.
\end{proof}

\begin{figure}[htb]
\epsfig{file=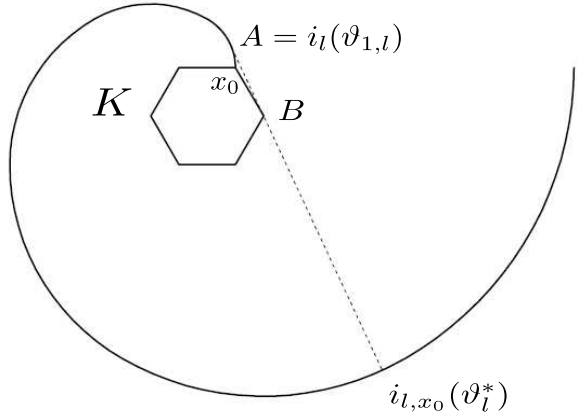, width=10cm} \caption{Left involute of an exagon } \label{exagon}
\end{figure}

Let us consider the  arc of the involute: 
$$\eta:=\{i_{l,x_0}(\vartheta):\vartheta^+_0 \leq \vartheta \leq \vartheta^+_0+3\pi/2 \}$$ 
and the set valued map $F$ (Definition \ref{defGF}).
Let
$$Q=\bigcup_{\vartheta^+_0 \leq \vartheta \leq \vartheta^+_0+3\pi/2 }\{\lambda F(\theta)
+(1-\lambda)i_{l,x_0}(\vartheta), \quad 0\leq \lambda \leq 1 \}, \, \theta=(\cos\vartheta, \sin \vartheta),$$

the union of segments joining the points of $\eta$ with the corresponding points on $\pa K$.
\begin{definition}\label{deftheta^*}
 If the tangent sector  $T(x_0)$ to $K$ has an opening less  or equal than $\pi/2$ as in Fig.\ref{square}, then $Q\cup K$ is convex; 
 let us define 
 $$\vartheta^*_l=\vartheta^+_0+3\pi/2.$$
 If $Q\cup K$ is not convex then let  us consider $co(Q\cup K)$. Let us notice that 
 $\pa co(Q\cup K)\setminus \pa (Q\cup K)$ is an open segment with end points $A,B$, with $A\in \eta$, $B\in \pa K$.
 Let us define $\vartheta^*_l$, with $\vartheta^+_0+3\pi/2\leq  \vartheta^*_l < \vartheta^+_0+2\pi$ 
 such that (see Fig.\ref{exagon})
 $\theta^*_l=(cos \vartheta^*_l, \sin \vartheta^*_l )$ is orthogonal to $AB$,  $B\in F(\theta^*_l)$.
 Let $\vartheta_{1,l}$ be the smallest $\theta > \theta_0^+$  satisfying $A=i_{l,x_0}(\vartheta_{1,l})$.
 Clearly $\vartheta^*_l=\vartheta_{1,l}+\frac32\pi$.

For the right involutes  a value
$\vartheta^*_r $  is defined similarly,
with
$  \vartheta_0^--2\pi <\vartheta^*_r \leq  \vartheta_0^--3\pi/2$, such that the line orthogonal to 
$\theta^*_r $ supporting $K$ at $F(\theta^*_r)$ is tangent to the right involute at
$i_{r,x_0}(\vartheta_{1,r})$, see Fig.\ref{circle} where  $F(\theta^*_r)$ is 
the point $x(s_{r-}(\vartheta^*_r))$, written as  $x(\vartheta^*_r)$ for short.

%\begin{definition}\label{deftheta^*r}
 %It is defined a value $\vartheta^*_r $,
%with
%$  \vartheta_0^--2\pi <\vartheta^*_r \leq  \vartheta_0^--3\pi/2$, such that the line orthogonal to 
%$\theta^*_r $ supporting $K$ at $F(\theta^*_r)$ is tangent to the right involute at
%$i_{r,x_0}(\vartheta_{1,r})$, see Fig.\ref{circle} where  $F(\theta^*_r)$ is the point $x(\vartheta^*_r)$.
%\end{definition}
\end{definition}

 \begin{figure}[htb]
\epsfig{file=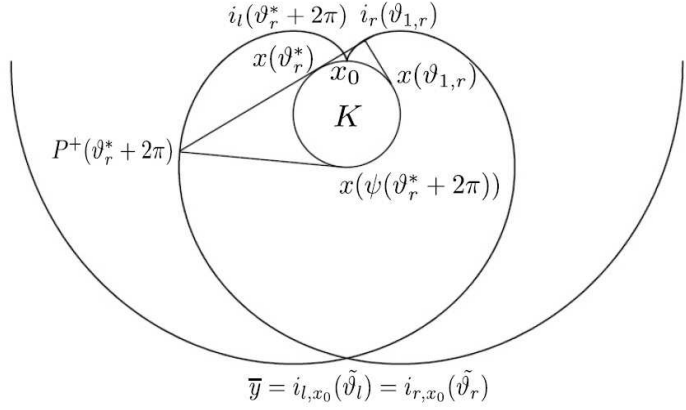, width=10cm} \caption{ Involutes of a circumpherence } \label{circle}
\end{figure}

\begin{theorem}\label{increasing sec} Let  $i_l:=i_{l,x_0}$ be the left involute starting at $x_0$ on the boundary of 
a plane convex body $K$, then:  
\begin{enumerate}
 \item[i)] the left involute  $\vartheta \to i_l(\vartheta)$  
has the  distancing from $K$  property for $\vartheta \geq \vartheta_0^+$,
but is not a SDC for  $\vartheta \geq \vartheta^*_l$;
\item[ii)] the curve $\vartheta \in [\vartheta^+_0, \vartheta^*_l]\to i(\vartheta)$ is 
a SDC; 
 \item[iii)] for  $y\in Int(K)$ the distance function $J_y(\vartheta)=|i_l(\vartheta)-y|$
 is strictly increasing for 
$\vartheta \geq \vartheta^+_0$;
\item[iv)] if $y\in \pa K$, then $J_y(\vartheta)$  is not decreasing for 
$\vartheta \geq \vartheta^+_0$ and $\frac{d}{d\vartheta}J> 0$ for $(\cos \vartheta, \sin \vartheta) \not \in  N_K(y)$.

\end{enumerate}
\end{theorem}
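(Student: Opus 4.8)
The plan is to exploit the explicit parameterization of the left involute together with the arc-length formulas from Theorems \ref{theoreminvtheta}--\ref{regi(sigma)}. Recall that by \eqref{di(theta)} (and Lemma \ref{regi(theta)} at the exceptional values) the derivative of $i_l$ is $\frac{d}{d\vartheta}i_l(\vartheta)=(s_{l+}(\vartheta)-s_0){\bf n}_\vartheta$, while $i_l(\vartheta)-x_l(s_{l+}(\vartheta))=-(s_{l+}(\vartheta)-s_0){\bf t}_\vartheta$ by \eqref{remiv}. So the vector from the foot point $x_l(s_{l+}(\vartheta))\in\pa K$ to $i_l(\vartheta)$ is orthogonal to the velocity of the involute, and the velocity points in the outer normal direction ${\bf n}_\vartheta$ of $K$ at that foot point. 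This is the geometric heart of everything.

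First I would prove (i), the distancing from $K$ property. Fix $y\in K$. By Definition \ref{defsecA} I must show $\langle \dot i_l(\vartheta), y - i_l(\vartheta)\rangle \le 0$. Writing $y - i_l(\vartheta) = (y - x_l(s_{l+}(\vartheta))) + (x_l(s_{l+}(\vartheta)) - i_l(\vartheta))$ and pairing with $\dot i_l(\vartheta)=(s_{l+}(\vartheta)-s_0){\bf n}_\vartheta$, the second summand contributes $0$ (it is parallel to ${\bf t}_\vartheta$), and the first contributes $(s_{l+}(\vartheta)-s_0)\langle {\bf n}_\vartheta, y - x_l(s_{l+}(\vartheta))\rangle$, which is $\le 0$ since ${\bf n}_\vartheta$ is an outer normal to $K$ at the boundary point $x_l(s_{l+}(\vartheta))$ and $s_{l+}(\vartheta)-s_0\ge 0$. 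This handles distancing from $K$. For the ``not a SDC for $\vartheta\ge\vartheta^*_l$'' part I would use the construction of $\vartheta^*_l$ in Definition \ref{deftheta^*}: the point $i_l(\vartheta_{1,l})=A$ lies on $\eta$ and the segment $AB\subset\pa\,\conv(Q\cup K)$ is the ``bridge'' chord; for $\vartheta$ slightly beyond $\vartheta^*_l$ the involute curves back toward $A$ so that $\langle \dot i_l(\vartheta), A - i_l(\vartheta)\rangle > 0$ while $A\in\ga_{i_l(\vartheta)}$, violating \eqref{defselexpwithder}. Concretely, using $d\sigma = (s_{l+}-s_0)\,d\vartheta$ and $\frac{d}{d\sigma}i_l = {\bf n}_{\vartheta(\sigma)}$, the chord from $i_l(\vartheta_{1,l})$ to $i_l(\vartheta)$ has direction that, once $\vartheta - \vartheta_{1,l}$ exceeds $\pi$ (equivalently $\vartheta>\vartheta^*_l$ since $\vartheta^*_l=\vartheta_{1,l}+\tfrac32\pi$ — actually the relevant threshold for self-distancing failure is $\pi$, and the extra $\tfrac\pi2$ accounts for reaching the supporting direction), makes an obtuse angle with ${\bf n}_\vartheta$; I would make this precise by integrating ${\bf n}_{\vartheta(\cdot)}$ and comparing arguments.

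Next, for (ii) I would show the curve restricted to $[\vartheta^+_0,\vartheta^*_l]$ is a SDC, i.e. satisfies \eqref{defselexpwithder}: for $\vartheta_0^+\le \tau \le \vartheta$ in this range, $\langle \dot i_l(\vartheta), i_l(\tau) - i_l(\vartheta)\rangle\le 0$. Using $\dot i_l(\vartheta)=(s_{l+}(\vartheta)-s_0){\bf n}_\vartheta$ it suffices to show $\langle {\bf n}_\vartheta, i_l(\tau)-i_l(\vartheta)\rangle\le 0$, i.e. that the whole traversed arc lies in the closed half-plane $\{z:\langle {\bf n}_\vartheta, z\rangle \le \langle {\bf n}_\vartheta, i_l(\vartheta)\rangle\}$. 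Here is where the set $Q\cup K$ and its convex hull enter: by construction of $\vartheta^*_l$, the line through $i_l(\vartheta)$ orthogonal to ${\bf n}_\vartheta$ supports $\conv(Q\cup K)$ for every $\vartheta\in[\vartheta^+_0,\vartheta^*_l]$ — this is precisely why $\vartheta^*_l$ was defined as the last direction for which the involute's tangent line still supports the body-plus-fan region. Since the arc $\{i_l(\tau):\vartheta^+_0\le\tau\le\vartheta\}\subset Q\cup K\subset\conv(Q\cup K)$, it lies in the correct half-plane, giving \eqref{defselexpwithder}. I would spell out the supporting-line claim by noting that $i_l(\vartheta)-x_l(s_{l+}(\vartheta))\perp{\bf n}_\vartheta$, that ${\bf n}_\vartheta$ supports $K$ at the foot point, and that the fan segments joining $\eta$ to $\pa K$ all lie on the inner side because they are ``shorter involute-parallel'' segments — this is the analogue for general convex bodies of the classical fact that an involute of a convex arc stays on one side of each of its tangent lines over an arc of total turning $\le\pi$ (and up to $\tfrac32\pi$ once one incorporates the convexity of $K$ itself near $x_0$).

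Finally (iii) and (iv) are strengthenings to strict monotonicity of $J_y(\vartheta)=|i_l(\vartheta)-y|$. Since $i_l$ is absolutely continuous and, by Theorem \ref{regi(sigma)}, $C^1$ in arc length with $\frac{d}{d\sigma}i_l={\bf n}_{\vartheta(\sigma)}$, I compute $\frac{d}{d\vartheta}J_y(\vartheta) = \frac{\langle \dot i_l(\vartheta), i_l(\vartheta)-y\rangle}{|i_l(\vartheta)-y|} = \frac{(s_{l+}(\vartheta)-s_0)\,\langle {\bf n}_\vartheta, i_l(\vartheta)-y\rangle}{|i_l(\vartheta)-y|}$ for $\vartheta>\vartheta_0^+$, $\vartheta\notin\mathfrak F$. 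By part (i)'s computation, $\langle {\bf n}_\vartheta, i_l(\vartheta)-y\rangle = \langle {\bf n}_\vartheta, x_l(s_{l+}(\vartheta)) - y\rangle \ge 0$ for $y\in K$; it is $>0$ exactly when the supporting line to $K$ in direction ${\bf n}_\vartheta$ does not pass through $y$, i.e. when $y\notin F(\theta)$, equivalently $(\cos\vartheta,\sin\vartheta)\notin N_K(y)$ — this gives (iv). If moreover $y\in\inte K$, then $\langle {\bf n}_\vartheta, x_l(s_{l+}(\vartheta)) - y\rangle>0$ for every $\vartheta$ (a point in the interior is strictly below every supporting line), and $s_{l+}(\vartheta)-s_0>0$ for $\vartheta>\vartheta_0^+$; integrating the a.e.-positive derivative over any subinterval (the finitely/countably many exceptional $\vartheta_i$ have measure zero and $J_y$ is absolutely continuous) yields strict increase, i.e. (iii). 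The one point needing care is that $J_y$ is genuinely differentiable a.e. with the stated derivative and is absolutely continuous so that ``a.e.\ positive derivative'' implies strict monotonicity — this follows from rectifiability of $i_l$ (Remark \ref{i.ii,iii}(i)) and the chain rule for the Lipschitz function $z\mapsto|z-y|$, valid since $i_l(\vartheta)\ne y$ throughout when $y\in K$ (the involute never re-enters $K$). The main obstacle, I expect, is the careful bookkeeping in (i)--(ii) of why the threshold is exactly $\vartheta^*_l$ — i.e., proving the supporting-line property of $i_l(\vartheta)$ with respect to $\conv(Q\cup K)$ on the whole range $[\vartheta_0^+,\vartheta^*_l]$ and its failure just beyond — which requires combining the turning of the involute (at most $\pi$ before a tangent line stops supporting the curve alone) with the extra angular room $\pi/2$ contributed by the tangent sector of $K$ at $x_0$, exactly as encoded in the two cases of Definition \ref{deftheta^*}.
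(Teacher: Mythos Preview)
Your proposal is correct and follows essentially the same approach as the paper: the key computation $\langle \dot i_l(\vartheta), i_l(\vartheta)-y\rangle = (s_{l+}(\vartheta)-s_0)\langle {\bf n}_\vartheta, x_l(s_{l+}(\vartheta))-y\rangle$ is identical (the paper differentiates $J_y^2$ rather than $J_y$, and proves (iii)--(iv) first, then derives the distancing part of (i) from them, but this is cosmetic), and for (ii) both you and the paper reduce the SDC condition to the tangent line at $i_l(\vartheta)$ supporting the earlier arc, then appeal to the construction in Definition~\ref{deftheta^*}. The only soft spot is your discussion of the ``not a SDC beyond $\vartheta^*_l$'' clause in (i), where your threshold bookkeeping is a bit muddled; the paper itself does not spell this out either, treating it as implicit in the supporting-line characterization from Definition~\ref{deftheta^*} (once $\vartheta>\vartheta^*_l$ the tangent half-line no longer supports the earlier arc, so \eqref{defselexpwithder} fails for some $\tau$).
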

\begin{proof} As $i_l$ is rectifiable, then the function 
  $J^2_y(\vartheta)=|i_l(\vartheta)-y|^2$ is an absolutely continuous function for $\vartheta\geq \vartheta^+_0$, 
  and from \eqref{di(theta)} for $\vartheta\not \in \mathfrak{F}$
  $$\frac12\frac{d}{d\vartheta}J^2_y=\langle \frac{d}{d\vartheta}i_l, i_l(\vartheta)-y\rangle=
  \langle (s_{l}(\vartheta)- s_0){\bf n}_\vartheta, x_l(s_{l+}(\vartheta))+
  (s_{l+}(\vartheta)- s_0){\bf t}_\vartheta-y\rangle=$$
  $$=(s_{l+}(\vartheta)- s_0)\langle {\bf n}_\vartheta, x_l(s_{l+}(\vartheta))-y\rangle \geq 0;$$
the last inequality holds since ${\bf n}_\vartheta$ is the outer normal to $\pa K $ at $x_l(s_{l+}(\vartheta))$. 
Moreover the previous inequality is strict for all $\vartheta$  if $y\in Int(K)$, 
it is also a strict inequality for $y\in \pa K$ and    $y \not \in F(\theta)$. This proves iii) and iv).
Then i) follows from iii) and  Definition \ref{defsecA}  of distancing from K property for a  curve.
To prove ii) let us recall that a SDC satisfies   \eqref{defselexpwithder}; then one has 
to prove that the angle at $i_l(\vartheta)$  between  the vector  
$i_l(\vartheta)-i_l(\tau)$, $\vartheta_0^+< \tau < \vartheta \leq \vartheta^*_l$,
and  ${\bf n}_{\vartheta}$, the tangent vector at $i_l(\vartheta)$,
is greater or equal than $\pi/2$; this is equivalent
to show that the half line $r_\vartheta$ through $i_l(\vartheta)$ and $x(s_{l+}(\vartheta))$
orthogonal to ${\bf n}_{\vartheta}$
supports at $i_l(\vartheta)$ the arc of  $i_l$ from $x_0$ to $i_l(\vartheta)$.
By Definition \ref{deftheta^*} this is the case for all $\vartheta$ between $\vartheta^+_0$ and $\vartheta^*_l$.
\end{proof}

\begin{cor}\label{invasseck} The left involute  $\vartheta \to i_{l,x_0}(\vartheta)$ of the boundary of a plane convex body $K$ 
is a self-distancing  curve from $K$  for $\vartheta \in [\vartheta^+_0, \vartheta^*_l]$; similarly
the right involute \eqref{parinvtheta-} is a self-contracting curve from $K$
for $\vartheta \in [\vartheta^*_r,\vartheta^-_0]$.
\end{cor}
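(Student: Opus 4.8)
The plan is to obtain the statement by unwinding Definition \ref{defSEC+} of $SDC_K$ and verifying its three clauses for the arc $\gamma:=\{\,i_{l,x_0}(\vartheta):\vartheta_0^+\le\vartheta\le\vartheta_l^*\,\}$, all of which are already contained in Theorem \ref{increasing sec}. Clause (i) — that $\gamma$ is a self-distancing curve — is precisely Theorem \ref{increasing sec}(ii) (and, since $\vartheta_l^*<\vartheta_0^++2\pi$, Lemma \ref{regi(theta)} guarantees the parameterization is $1$-$1$ there, so the ordering by increasing $\vartheta$ is linear). Clause (ii) — that $\gamma$ meets $\pa_{rel} K$ — follows from Remark \ref{i.ii,iii}(ii), since $i_{l,x_0}(\vartheta_0^+)=x_0\in\pa K$. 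Clause (iii) — the distancing from $K$ property — is the first half of Theorem \ref{increasing sec}(i), valid for every $\vartheta\ge\vartheta_0^+$ and hence on $[\vartheta_0^+,\vartheta_l^*]$; because $i_{l,x_0}$ is rectifiable (Remark \ref{i.ii,iii}(i)), hence absolutely continuous, Remark \ref{remga^K} lets me rewrite the differential inequality \eqref{defselexpwithderfromK} as the metric inequality \eqref{ineqdefEC+} demanded in Definition \ref{defSEC+}(iii).

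The one item in Definition \ref{defSEC+} not stated verbatim in Theorem \ref{increasing sec} is the inclusion $\gamma\subset\R^2\setminus\relint K$, so I would spell that out. From the representation \eqref{defleftinvolutegeneral}, $i_{l,x_0}(\vartheta)=x_l(s_{l+}(\vartheta))-(s_{l+}(\vartheta)-s_0)\,{\bf t}_\vartheta$ with $x_l(s_{l+}(\vartheta))\in F(\theta)$ carrying outer normal ${\bf n}_\vartheta$; hence $i_{l,x_0}(\vartheta)$ differs from $x_l(s_{l+}(\vartheta))$ by a multiple of ${\bf t}_\vartheta$ and so lies on the line through $x_l(s_{l+}(\vartheta))$ orthogonal to ${\bf n}_\vartheta$, which supports $K$; therefore $i_{l,x_0}(\vartheta)\notin\relint K$. (Alternatively, were $i_{l,x_0}(\bar\vartheta)\in\inte K$ for some $\bar\vartheta>\vartheta_0^+$, Theorem \ref{increasing sec}(iii) applied with $y=i_{l,x_0}(\bar\vartheta)$ would make $J_y$ strictly increasing on $[\vartheta_0^+,+\infty)$ yet zero at $\bar\vartheta$, forcing $J_y(\vartheta_0^+)<0$, impossible; and $\bar\vartheta=\vartheta_0^+$ gives $y=x_0\in\pa K$.) With (i), (ii), (iii) and this inclusion in hand, $\gamma$ is a $SDC_K$.

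For the right involute I would not repeat the argument but invoke the reflection identity $i^K_{r,x_0}=\rho(i^{\rho(K)}_{l,\rho(x_0)})$ recorded after the definition of the two involutes: the part just proved, applied to the convex body $\rho(K)$ and the point $\rho(x_0)$, makes the left-involute arc of $\rho(K)$ over its parameter window $[\vartheta_0^+,\vartheta_l^*]$ a $SDC_{\rho(K)}$; since $\rho$ is an isometry it transports every metric condition of Definition \ref{defSEC+} to $i^K_{r,x_0}$ with $K$ in place of $\rho(K)$, and since $\rho$ reverses the sense of rotation the admissible window becomes $[\vartheta_r^*,\vartheta_0^-]$ while the distancing-from-$K$ property along increasing parameter turns into a contracting-from-$K$ property along increasing parameter, which is exactly the self-contracting-from-$K$ claim. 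I expect no genuine obstacle here: the mathematical work was already carried out in Theorem \ref{increasing sec}, and the only point needing a line of its own is the verification that the involute never enters $\relint K$.
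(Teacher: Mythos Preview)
Your proposal is correct and follows essentially the same approach as the paper: invoke Theorem~\ref{increasing sec}(ii) for the self-distancing property and Theorem~\ref{increasing sec}(i) for the distancing-from-$K$ property. Your version is more thorough---the paper's two-line proof does not explicitly check $\gamma\cap\partial_{rel}K\neq\emptyset$ or $\gamma\subset\R^2\setminus\relint K$, and for the right involute the paper simply recalls that a self-contracting curve is a self-distancing curve with the opposite orientation rather than invoking the reflection identity---but the substance is the same.
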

\begin{proof}
From i) of Theorem \ref{increasing sec} the left involute is a curve such that the distance of its points from all
 $y\in K$ is not decreasing;  ii) of the same theorem proves that it is a  SDC. Let us recall that a self-contracting curve is a self-distancing 
 curve with opposite orientation.
\end{proof}

\begin{theorem}\label{thetaintersection}Let $K$ be a plane convex body not reduced to a single point and let 
$x_0,s_o,\theta_0,\vartheta_0$ be the initial parameters.
Let 
$[\vartheta^+_0, \vartheta^+_0+2\pi]\ni  \vartheta \to i_{l}(\vartheta)$ be an arc of the left  involute starting at $x_0$,
 $[\vartheta^-_0-2\pi,\vartheta^-_0]\ni  \vartheta \to i_{r}(\vartheta)$ be an arc of the right  involute ending at  $x_0$;
 then there exists only one point $\overline{y}\neq x_0$ which belongs to both arcs and
 \begin{equation}\label{xsegnato}
  \overline{y}= i_{l}(\tilde{\vartheta_l})=i_{r}(\tilde{\vartheta_r}), 
 \end{equation}
 with  
  $$\vartheta^-_0 \leq \vartheta^*_r+2\pi <\tilde{\vartheta_l}< \vartheta_0^+ +3\pi/2 \leq \vartheta^*_l, $$
  $$\vartheta^*_r \leq \vartheta_0^- -3\pi/2 < \tilde{\vartheta_r}< \vartheta^*_l-2\pi\leq \vartheta^+_0. $$
 
\end{theorem}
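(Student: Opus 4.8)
The plan is to show existence and uniqueness separately, exploiting the complementary monotonicity of the two involutes together with the fact that each is a SDC on a suitable $\vartheta$-range (Theorem \ref{increasing sec}, Corollary \ref{invasseck}). First I would set up a one-parameter comparison. Both involutes can be described through the parameter $\vartheta$ governing the outer normal ${\bf n}_\vartheta$; the left involute $i_l$ sweeps $\vartheta$ increasing from $\vartheta_0^+$, and the right involute $i_r$ sweeps $\vartheta$ decreasing from $\vartheta_0^-$. Because $i_l$ starts at $x_0$ moving so that $\arg$ of its tangent increases (Lemma \ref{regi(theta)}, \eqref{di(theta)}: the derivative direction is ${\bf n}_\vartheta$), and $i_r$ starts at $x_0$ with the opposite turning sense, near $x_0$ the two arcs leave $x_0$ into (generically) opposite half-planes determined by the tangent line to $\pa K$ at $x_0$. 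I would track the two arcs until they must cross.

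For \textbf{existence} of a common point $\overline y\neq x_0$, I would use a continuity/winding argument. Consider the arc $i_l$ on $[\vartheta_0^+,\vartheta_0^+ + 3\pi/2]$ and the arc $i_r$ on $[\vartheta_0^- - 3\pi/2,\vartheta_0^-]$; by Remark \ref{i.ii,iii}(iv) and \eqref{parallelself} each turns its normal through a total angle $3\pi/2$, so together they ``wrap around'' $K$. A clean way is to close up a Jordan curve: follow $i_l$ from $x_0$ outward, then an arc of $\pa K$ back, and similarly for $i_r$, and observe that the region bounded out by $i_l$ (the SDC property on $[\vartheta_0^+,\vartheta_l^*]$ says the arc together with $K$ bounds a convex-ish region — indeed $Q\cup K$ or $\co(Q\cup K)$ of Definition \ref{deftheta^*}) forces $i_r$, which must distance itself from $K$ monotonically, to enter that region and hence to meet $\partial$ of it along $i_l$. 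Concretely: the endpoint $i_l(\vartheta_0^+ + 3\pi/2)$ lies on the ray from $x(\vartheta_0^+ + 3\pi/2)$ in direction $-{\bf t}$, at distance $\geq |\pa K|\cdot(3/4)$ roughly, on the ``far side'' of $K$ relative to $x_0$; symmetrically for $i_r$. A comparison of the two continuous paths, one parameterized on $[\vartheta_0^+,\vartheta_0^++3\pi/2]$ and the other (reversed) on a matching interval, via the intermediate value theorem applied to the signed distance between $i_l(\vartheta)$ and the $i_r$-arc, yields an intersection point; the quoted inequalities $\vartheta_r^*+2\pi < \tilde\vartheta_l < \vartheta_0^+ + 3\pi/2$ and $\vartheta_r^* \le \vartheta_0^- - 3\pi/2 < \tilde\vartheta_r < \vartheta_l^* - 2\pi$ just record where this crossing is forced to occur — strictly before the $3\pi/2$ endpoints (hence within the SDC range of both) and strictly after $x_0$.

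For \textbf{uniqueness}, I would argue by contradiction using the self-distancing property. Suppose $\overline y$ and $\overline z$, both $\neq x_0$, lie on both arcs, say $\overline y = i_l(\vartheta_1) = i_r(\vartheta_1')$ and $\overline z = i_l(\vartheta_2) = i_r(\vartheta_2')$ with $\vartheta_1 < \vartheta_2$. Along $i_l$ (a SDC on this range) the three ordered points $x_0 \prec \overline y \prec \overline z$ satisfy $|\overline y - x_0| \le |\overline z - x_0|$ by \eqref{expandingproperty}. But along $i_r$, reversing orientation gives a SDC on which $x_0$ is the \emph{last} point, so the same three points appear in order $\overline z \prec \overline y \prec x_0$ (since $i_r$ is traversed with decreasing $\vartheta$ toward $x_0$), whence by \eqref{expandingproperty} applied to $i_r$ we get $|\overline y - x_0| \ge |\overline z - x_0|$. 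Hence $|\overline y - x_0| = |\overline z - x_0|$, and equality in the self-distancing inequality for a strictly-turning convex arc (the involute has positive curvature a.e., Theorem \ref{regi(sigma)}) forces the intermediate arc to be degenerate, i.e. $\overline y = \overline z$. One must check the equality case carefully: for a convex arc with a.e. positive curvature, if a chord from a fixed point has constant length along a sub-arc then that sub-arc is a circular arc centered at the point with zero turning, contradicting positive curvature unless it is a single point. I would also need to rule out a second intersection occurring \emph{before} $x_0$ in the sense of the orderings, but $x_0$ is by construction the common starting point of $i_l$ and common endpoint of $i_r$, so any shared point other than $x_0$ lies strictly inside the relevant SDC ranges, and the inequalities in the statement confirm $\tilde\vartheta_l, \tilde\vartheta_r$ sit in the open sub-intervals where both curves are genuine SDCs.

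The \textbf{main obstacle} is the existence step: making the winding/Jordan-curve argument rigorous for an \emph{arbitrary} (non-smooth, possibly polygonal) convex body, where the involutes contain circular sub-arcs at vertices (Remark \ref{i.ii,iii}(iii)) and the parameter $\vartheta \to i_l(\vartheta)$ is merely cadlag in its derivative. I expect to handle this by the approximation Theorem \ref{KntoK}: prove the intersection statement first for $K \in C^2_+$, where $i_l, i_r$ are $C^1$ convex curves and the crossing is transversal and easy to locate, then pass to the limit — uniform convergence of the involutes on compact $\vartheta$-intervals (Theorem \ref{KntoK}) and the quantitative two-sided bounds on $\tilde\vartheta_l$ keep the intersection point away from the endpoints, so it survives the limit; uniqueness passes to the limit likewise since the SDC inequalities are closed conditions and the positive-curvature equality analysis is stable under the convergence in Remark \ref{snconverge}.
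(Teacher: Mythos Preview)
Your overall plan (IVT for existence, distancing properties for uniqueness, $C^2_+$ approximation via Theorem~\ref{KntoK} for the general case) is the same architecture the paper uses. But there is a genuine gap in each of the two main steps.

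\medskip
\textbf{Existence.} Your argument remains at the level of a heuristic (``close up a Jordan curve'', ``IVT on the signed distance between $i_l(\vartheta)$ and the $i_r$-arc''). The paper's proof is built on a concrete scalar function to which the IVT is applied, and this function is not the one you describe. The paper fixes $\vartheta$ and looks at the half-line $\{x(\vartheta)+\lambda{\bf t}_\vartheta:\lambda>0\}$; it defines $P^+(\vartheta)$ as the first point where this half-line meets $i_l$, and sets $\phi(\vartheta):=|P^+(\vartheta)-i_l(\vartheta)|$. The key observation (Claim~1 in the paper) is that $P^+(\overline\vartheta)\in i_r$ \emph{iff} $\phi(\overline\vartheta)=L=|\partial K|$, because the tangent segment from a point of $i_r$ back to $\partial K$ has length equal to the \emph{complementary} arc length. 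One then shows $\phi$ is strictly increasing (Claim~2) and that $\phi(\vartheta_r^*+2\pi)<L<\phi(\vartheta_0+3\pi/2)$ (Claim~3), so IVT gives the intersection with the stated bounds on $\tilde\vartheta_l$. Your signed-distance idea is a different function, and it is not clear how you would localize the crossing to the interval $(\vartheta_r^*+2\pi,\vartheta_0^+ +3\pi/2)$ without something like the $\phi=L$ criterion.

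\medskip
\textbf{Uniqueness.} Your argument follows the paper's up to the point where both distancing inequalities force $|\overline y-x_0|=|\overline z-x_0|$, and hence the sub-arc of the involute between $\overline y$ and $\overline z$ lies on a circle centred at $x_0$. But your conclusion is wrong: you say this circular arc has ``zero turning, contradicting positive curvature''. A circular arc has positive curvature and nonzero turning; there is no contradiction there. The paper's correct finish is different: if an arc of the involute of $K$ is a circular arc centred at $x_0\in\partial K$, then differentiating $|i_l(\vartheta)-x_0|^2$ and using \eqref{di(theta)} gives $\langle{\bf n}_\vartheta,\,x_l(s_{l+}(\vartheta))-x_0\rangle=0$ on an interval of $\vartheta$, i.e.\ the supporting line with outer normal ${\bf n}_\vartheta$ passes through $x_0$ for a whole interval of normals. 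This forces $K$ to degenerate to the single point $x_0$, contradicting the hypothesis. So the contradiction comes from the structure of $K$, not from curvature of the involute.
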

\begin{proof} For simplicity, first let us prove the existence of $\overline{y}$ assuming that $K\in C^2_+$.  
With the assumed conditions , $\RR \ni \vartheta  \to x(\vartheta):=x(\theta)$ defined by 
\eqref {supfunctionplane} is a parameterization  of $\pa K$.

  Let $\vartheta \in [\vartheta_0, \vartheta_0+2\pi]$ and let
   $P^+(\vartheta)$ be the first common point of the half line
      $\{x(\vartheta)+\la{\bf t}_\vartheta, \la >0 \}$ and of $i_{l}$.
      Moreover, let  $[\vartheta_0, \vartheta^*_l]\ni \vartheta \to \psi(\vartheta)$ 
      be the function satisfying 
      \begin{equation}\label{p^+^i_l} 
   P^+(\vartheta)=i_l(\psi(\vartheta)).
       \end{equation}
     Let
       $$\phi(\vartheta):= |P^+(\vartheta)-i_l(\vartheta)|.$$
       First the following sentence will be  proved:\\
    {\em    Claim  1}:.
    $P^+(\overline{\vartheta_l})$ belongs to $i_{r}$
 iff  the equality
\begin{equation}\label{phi(thetabar)}   
   \phi(\overline{\vartheta_l})=L
  \end{equation}
 holds for some $\overline{\vartheta_l}\in [\vartheta_0, \vartheta_0+2\pi]$, $L=|\pa K|$.

 {\em Proof of Claim 1}.\\
 If  \eqref{phi(thetabar)}  holds, then
 $$|P^+(\overline{\vartheta_l})-x(\overline{\vartheta_l})|=
 |P^+(\overline{\vartheta_l})-i_{l}(\overline{\vartheta_l})|-
 |i_{l}(\overline{\vartheta_l})-x(\overline{\vartheta_l})|=$$
 $$=
  L-|arc^+(x_0,x(\overline{\vartheta_l}))|=
  |arc^+(x(\overline{\vartheta_l}),x_0)|=|arc^-(x_0,x(\overline{\vartheta_l}-2\pi)  )|.$$
 Thus 
  $$P^+(\overline{\vartheta_l})=
  x(\overline{\vartheta_l})+|arc^+(x(\overline{\vartheta_l}),x_0)|{\bf t}_{\overline{\vartheta}_l}=
  x(\overline{\vartheta_l}-2\pi)+|arc^-(x_0,x(\overline{\vartheta_l}-2\pi))|{\bf t}_{\overline{\vartheta_l}-2\pi}=
  i_{r}(\overline{\vartheta_l}-2\pi).$$
 Thus  $P^+(\overline{\vartheta_l})$ is on both arcs of involutes and the other way around.
 
       Our aim is to prove that there exists 
       $\overline{\vartheta_l}\in [\vartheta_0, \vartheta_0+3\pi/2]$ such that \eqref {phi(thetabar)} holds.
       For this goal we prove next Claim 2 and Claim 3.

{\em Claim 2}: The following facts hold in  $[\vartheta_0, \vartheta^*_l]$:
   \begin{enumerate}
   \item[i)] $\psi$ is continuously differentiable and $\psi'>0$,
   \item[ii)]   $\phi'>0$.
   \end{enumerate}
     {\em Proof of Claim 2.} \\
     Let us prove that ${\bf n}_\vartheta $ and ${\bf n}_{\psi(\vartheta) }$ satisfy
     \begin{equation}\label{anglefranthetanapsi}
      \langle {\bf n}_\vartheta,{\bf n}_{\psi(\vartheta)} \rangle < 0.     
     \end{equation}
Let us consider the triangle with vertices $x(\vartheta), i_l(\psi(\vartheta)), x(\psi(\vartheta))$. As
$$|i_l(\psi(\vartheta))-x(\psi(\vartheta))|=|arc^+(x_0,x(\psi(\vartheta)))| \geq 
|arc^+(x(\vartheta),x(\psi(\vartheta))| \geq |x(\psi(\vartheta))-x(\vartheta)|,$$
the angle between   $x(\psi(\vartheta))-P^+(\vartheta)$ and $x(\vartheta)- P^+(\vartheta)$ is acute  and 
the angle between ${\bf n}_\vartheta $ and ${\bf n}_{\psi(\vartheta) }$ is obtuse.
Thus \eqref{anglefranthetanapsi} follows. By definition, $\psi(\vartheta)$ solves \eqref{p^+^i_l}, 
thus $\psi(\vartheta)$
is the implicit solution to
\begin{equation}\label{dinipsi}
 \langle i_l(\psi(\vartheta))-x(\vartheta), {\bf n}_\vartheta \rangle= 0.
\end{equation}
As
$$\langle \frac{d}{d \psi}i_l(\psi), {\bf n}_\vartheta \rangle =(s(\psi)-s_0)\langle {\bf n}_\psi, {\bf n}_\vartheta \rangle$$
is negative by \eqref{anglefranthetanapsi}, then by Dini's Theorem  
equation \eqref{dinipsi} 
has a solution $\psi(\theta)$ satisfying
$$(s(\psi)-s_0)\langle {\bf n}_\psi,{\bf n}_\vartheta \rangle\psi'(\vartheta)+ 
\langle i_l(\psi(\vartheta))-x(\vartheta), {\bf t}_\vartheta \rangle=0.$$
As $i_l(\psi(\vartheta))-x(\vartheta)=\lambda {\bf t}_\vartheta (\lambda > 0)$ 
and $\eqref{anglefranthetanapsi}$ holds, then  $\psi'>0$, $\psi$ is strictly increasing and continuously differentiable.

Let us prove (ii). The formula
\begin{equation}\label{dphdtheta^2}
 \frac{d}{d \vartheta}|i_l(\vartheta)-i_l(\psi(\vartheta))|^2=
2\langle i_l(\vartheta)-i_l(\psi(\vartheta)),
\frac{d}{d \vartheta}i_l(\vartheta)-\frac{d}{d \vartheta}i_l(\psi(\vartheta))\rangle
\end{equation}
holds. Let us notice that $i_l(\vartheta)-i_l(\psi(\vartheta)$ is parallel to ${\bf t}_\vartheta$; thus by
\eqref{di(theta)} 
$$\langle i_l(\vartheta)-i_l(\psi(\vartheta)), \frac{d}{d \vartheta}i_l(\vartheta)\rangle=0.$$
On the other hand  
$$-\langle i_l(\vartheta)-i_l(\psi(\vartheta)),
\frac{d}{d \vartheta}i_l(\psi(\vartheta))\rangle=-\langle-s(\vartheta){\bf t}_\vartheta-\lambda {\bf t}_\vartheta,
(s(\psi(\vartheta))-s_0){\bf n}_{\psi(\vartheta)}\rangle\psi'$$
$$=(s(\vartheta)+\lambda)(s(\psi(\vartheta))-s_0)\langle {\bf t}_\vartheta,{\bf n}_{\psi(\vartheta)}\rangle \psi'.$$
As the angle between ${\bf t}_\vartheta$ and ${\bf n}_{\psi(\vartheta)}$ is acute, then 
last term in the above equalities 
is positive; thus the derivative in the left hand side of \eqref{dphdtheta^2} is positive
and (ii) of Claim 2 follows.

{\em Claim 3}:
In the interval $[\vartheta_0,\vartheta^*_l]$ the function  $\phi$ 
has values smaller than $L$ and greater than $L$.
  
 {\em Proof of Claim 3}.\\
 The angles  $\vartheta^*_r$, and $\vartheta_{1,r}$ has been introduced in Definition  \ref{deftheta^*}. For simplicity
 $x(s_{r-}(\vartheta_{1,r}))$ will be denoted with $x(\vartheta_{1,r})$.
 Let us consider the convex set bounded by $arc^+(  x(\psi(\vartheta^*_r+2\pi)), x(\vartheta_{1,r}))$ and by 
 the polygonal line with vertices 
    $x(\vartheta_{1,r}),i_r(\vartheta_{1,r}),P^+(\vartheta^*_r+2\pi), x(\psi(\vartheta^*_r+2\pi))$, 
    see Fig.\ref{circle}.
     
 Clearly the inequalities
  $$|i_r(\vartheta_{1,r})-P^+(\vartheta^*_r+2\pi)|< |i_r(\vartheta_{1r})-x(\vartheta_{1,r})|+
  |arc^-(x(\vartheta_{1,r}),x(\psi(\vartheta^*_r+2\pi))|+|x(\psi(\vartheta^*_r+2\pi)-P^+(\vartheta^*_r+2\pi)|=$$
  $$=|arc^-(x_0,x(\vartheta_{1,r}))|+|arc^-(x(\vartheta_{1,r}),x(\psi(\vartheta^*_r+2\pi))|+
  |arc^-(x(\psi(\vartheta^*_r+2\pi),x_0)|=L$$
  hold.
  As 
  $$\phi(\vartheta^*_r+2\pi)=|i_l(\vartheta^*_r+2\pi)-P^+(\vartheta^*_r+2\pi)|
  <|i_r(\vartheta_{1,r})-P^+(\vartheta^*_r+2\pi)|,$$
  using the previous inequalities, one obtains
 $$    \phi(\vartheta_r^*+2\pi) <L.$$
 Let us show now that
\begin{equation}\label{phivartheta0)+3pi/2>L}
   \phi(\vartheta_0+3\pi/2) > L
 \end{equation}
 holds.
 
Let $\mathnormal{\rho}$ be the half line with origin  $x_0$ and direction  
$-{\bf t}_{\vartheta_0}$; $\mathnormal{\rho}-\{x_0\}$
crosses the arc $i_{r}$ in a first point  $y_1=i_r(\alpha_1)$, with $ \alpha_1 < \vartheta_0-\pi/2$.
Then 
    $$r:=|x_0-y_1|< |y_1-x(\alpha_1)|+|arc^-(x(\alpha_1),x_0)|=L.$$
    The half line  $\mathnormal{\rho}$ meets the arc
    $i_{l}$ in a point $y_2$ and
     $|y_2-x_0|=L.$

     Property (iii) of Theorem \ref{increasing sec} implies that  the arc 
     $\mathnormal{D}$ of the left involute after $y_2$ 
     lies outside of the circle centered in $x_0$ and with radius $L$. The similar property for the 
        right involute implies that  the arc $\mathnormal{C}$ 
    of the right involute joining $x_0$ to $y_1$ lies in the circle with center
     $x_0$ and radius $r$; thus the straight line tangent to $K$ at  $x(\vartheta_0+3\pi/2)$  
 meets the arc $\mathnormal{C}$  in $i_r(\vartheta_0-\pi/2)$ and 
 $\mathnormal{D}$ in $P^+(\vartheta_0+3\pi/2)$.
 Therefore 
 $$\phi(\vartheta_0+3/2\pi)=|i_l(\vartheta_0+3\pi/2)-P^+(\vartheta_0+3\pi/2)|=$$
 $$=
 |i_l(\vartheta_0+3\pi/2)-x(\vartheta_0+3\pi/2)|+|x(\vartheta_0+3\pi/2)-P^+(\vartheta_0+3\pi/2)|> $$
$$>|i_l(\vartheta_0+3\pi/2)-x(\vartheta_0+3\pi/2)|+|x(\vartheta_0+3\pi/2)-i_r(\vartheta_0-\pi/2)|= $$
$$=|arc^+(x_0,x(\vartheta_0+3\pi/2)|+|arc^-(x_0,x(\vartheta_0+3\pi/2))|=L.$$
\eqref{phivartheta0)+3pi/2>L} is proved.
  
  The intermediate values  theorem  implies  that there exists 
  $\overline{\vartheta_l}\in [2\pi+\vartheta^*_r, \vartheta_0+3\pi/2]$
  such  that \eqref{phi(thetabar)} holds.  
  Claim 1 implies that
  $$P^+(\overline{\vartheta_l})=i_l(\psi( \overline{\vartheta_l}))=i_r(\overline{\vartheta_l}-2\pi),$$
    so the right involute and the left involute 
  meet each other in one point and \eqref{xsegnato}  is proved with 
  $\tilde{\vartheta_l}=\psi( \overline{\vartheta_l})$, $\tilde{\vartheta_r}=\overline{\vartheta_l}-2\pi$.

  By approximation argument the same result holds for an arbitray convex body K.
   
 Let us prove now that the point $\overline{y}$ is unique. Let us argue by contradiction.
  Let  $P,Q$ be two distinct points on  $i_{l}\cap i_{r}$, with $P \prec Q$ on $i_l$ and $i_r$;
  then since $i_l$ is a distancing curve from $x_0$:
  $$|P-x_0| \leq |Q-x_0|,$$
  and since $i_r$ is a contracting curve to $x_0$:
  $$|P-x_0| \geq |Q-x_0|.$$
  Therefore all the points on the arc of $i_l$ and of $i_r$ between $P$ and $Q$ have the same distance 
  from $x_0$; thus, between $P$ and $Q$, $i_l$ and $i_r$ 
  (arc of involutes of a same convex body $K$) coincide with the same  arc of circle centered at $x_0$, this implies that $K$ reduce 
  to the point $x_0$, which is not possible for the assumption. 
  \end{proof}
     \begin{definition}\label{defzl}
  Let $z \not \in K$. Let $z_l (z_r) \in \pa K $ on the contact set on the ``left'' (right) support line to $K$
   through $z$.
 If the contact set is a 1-face on these support lines, then $z_l$ and $z_r$ are identified as the closest ones to $z$.
 The triangle $zz_lz_r$ is  counterclockwise oriented.
  \end{definition}
  
  \begin{theorem}\label{family of involutes} 
 For every $\xi\in \pa K$ let us consider 
  the left involutes $i_{l,\xi}$ and the right involutes $i_{r,\xi}$
  parameterized by 
  their arc length $\sigma$.
   The maps 
   $$ \pa K\times (0,+\infty)\ni (\xi, \sigma) \to i_{l,\xi}(\theta(\sigma)) \in \RR^2\setminus K, $$
   $$ \pa K\times (0,+\infty)\ni (\xi, \sigma) \to i_{r,\xi}(\theta(\sigma)) \in \RR^2\setminus K $$
  are  1-1 maps. 
  \end{theorem}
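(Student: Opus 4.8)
\medskip
\noindent\emph{Proof plan.} I will treat the left involutes; the right ones follow from the reflection identity stated after Theorem~\ref{theoreminvtheta}, or by repeating the argument with the obvious sign changes. The idea is to \emph{invert the map geometrically}: from the single point $z:=i_{l,\xi}(\vartheta(\sigma))$ I will recover $\xi\in\pa K$ and the parameter $\vartheta(\sigma)$ (hence $\sigma$, by Theorem~\ref{regi(sigma)}) using $z$ and $K$ only; this is exactly injectivity. Along the way one also checks that the image really lies in $\RR^2\setminus K$.

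\smallskip
First set, as in \eqref{defleftinvolutegeneral} and \eqref{remiv}, $p:=x_l(s_{l+}(\vartheta))\in\pa K$, so that $z=p-\lambda\,{\bf t}_\vartheta$ with $\lambda:=s_{l+}(\vartheta)-s_0=|z-p|$; since $\sigma(\vartheta)=\int_{\vartheta_0^+}^{\vartheta}(s_{l+}-s_0)\,d\tau>0$ forces $s_{l+}(\vartheta)>s_0$, we get $\lambda>0$. Then $z\notin K$: if $z\in\inte K$, the function $\tau\mapsto|i_{l,\xi}(\tau)-z|$ would vanish at $\tau=\vartheta$ although it equals $|\xi-z|>0$ at $\tau=\vartheta_0^+$ and is nondecreasing (the proof of Theorem~\ref{increasing sec} shows this for every $y\in K$); and $z\notin\pa K$ because on its support line $z$ sits at distance $\lambda$ from the counterclockwise endpoint $p$ of the contact face $F(\theta_\vartheta)$, so $\lambda\le|F(\theta_\vartheta)|$ would place $z$ in that $1$-face, which (checking the cases where $\xi$ is a relative-interior point or an endpoint of a face) forces $z=\xi$, i.e.\ $\sigma=0$. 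Hence $\lambda>|F(\theta_\vartheta)|$.

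\smallskip
\noindent\emph{Core step.} As ${\bf n}_\vartheta$ is the outer normal to $K$ at $p$, the line $\ell_\vartheta$ through $z$ in the direction ${\bf t}_\vartheta$ supports $K$; since $z\notin K$ the support lines of $K$ through $z$ are exactly the left and right tangent lines $\ell_{\mu_l},\ell_{\mu_r}$ of Definition~\ref{defzl}, with contact points $z_l\in F(\theta_{\mu_l})$, $z_r\in F(\theta_{\mu_r})$ and $z\,z_l\,z_r$ counterclockwise; thus ${\bf n}_\vartheta\in\{{\bf n}_{\mu_l},{\bf n}_{\mu_r}\}$. I claim ${\bf n}_\vartheta={\bf n}_{\mu_l}$: writing $z_l-z=a\,{\bf t}_{\mu_l}$ and $z_r-z=b\,{\bf t}_{\mu_l}+c\,{\bf n}_{\mu_l}$, the inclusion $z_r\in K$ with $z_r\notin\ell_{\mu_l}$ gives $c<0$, and the counterclockwise orientation of $z\,z_l\,z_r$ gives $-ac>0$, hence $a>0$; so on $\ell_{\mu_l}$ the point $z$ lies on the $-{\bf t}_{\mu_l}$ side of $z_l$ (and of every point of $F(\theta_{\mu_l})$), whereas the symmetric computation on $\ell_{\mu_r}$ puts $z$ on the $+{\bf t}_{\mu_r}$ side of $z_r$. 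Since $z=p-\lambda\,{\bf t}_\vartheta$ with $\lambda>0$ places $z$ on the $-{\bf t}_\vartheta$ side of $p\in F(\theta_\vartheta)$, we must have $\ell_\vartheta=\ell_{\mu_l}$; so ${\bf n}_\vartheta={\bf n}_{\mu_l}$, and $p$ is the counterclockwise endpoint $q^{*}$ of $F(\theta_{\mu_l})$, a point determined by $z$ and $K$ only. Consequently $\lambda=|z-q^{*}|$ is determined, and then so is $\xi=x_l\!\left(s_{l+}(\mu_l)-\lambda\right)$, the value being independent of the $2\pi$-representative of $\vartheta$ because $s_{l+}$ increases by $L$ over each period and $x_l$ is $L$-periodic. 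Finally $\vartheta$ itself is pinned down: taking $s_0:=s_{l+}(\mu_l)-\lambda$ one has $\mu_l\ge\vartheta_{l+}(s_0)=\vartheta_0^+$ (since $s_{l+}(\mu_l)=s_0+\lambda>s_0$), while \eqref{parallelself} gives $i_{l,\xi}(\mu_l+2\pi k)=z-kL\,{\bf t}_{\mu_l}\neq z$ for $k\neq0$; thus $\vartheta=\mu_l$ and $\sigma=\sigma(\mu_l)$. This shows the maps are one-to-one.

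\smallskip
\noindent\emph{Main obstacle.} When $K\in C^2_+$ everything is transparent: through each exterior point $z$ there is a unique left tangent line, it touches $\pa K$ at a single point, and $z$ lies on its clockwise side, so $\xi$ and $\vartheta$ are read off at once. The real work is the degenerate bookkeeping when $\pa K$ has $1$-faces or vertices: one must keep track that the involute's contact point $p$ is the \emph{counterclockwise endpoint} of $F(\theta_\vartheta)$ --- which can differ from the nearest contact point $z_l$ of Definition~\ref{defzl} by $|F(\theta_\vartheta)|$ --- and confirm that the orientation computation, the inequality $\lambda>|F(\theta_\vartheta)|$, and the reconstruction of $\lambda$ and $\xi$ remain valid (they do, precisely because $z$ lies on the $-{\bf t}_\vartheta$ side of \emph{all} of $F(\theta_\vartheta)$). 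Once that is in place the reconstruction is purely formal, and the same scheme, with $-{\bf t}$ replaced by $+{\bf t}$ throughout and the roles of $z_l,z_r$ interchanged, handles the right involutes.
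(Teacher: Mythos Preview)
Your argument is correct and rests on exactly the same geometric idea as the paper's: the support line to $K$ through $z$ in direction ${\bf t}_\vartheta$ must be the \emph{left} tangent line from $z$, and the starting point $\xi$ is then obtained by unwrapping the arclength $\lambda=|z-q^*|$ back along $\pa K$ from the contact point. The paper runs this construction in the other direction --- starting from an arbitrary $z\notin K$ it produces $(\xi_l,\vartheta_l)$ with $i_{l,\xi_l}(\vartheta_l)=z$, concluding \emph{surjectivity}; injectivity is then dispatched in one line via Remark~\ref{parallel} (parallel left involutes do not cross). You instead package the same reconstruction as a proof of uniqueness, which gives injectivity directly; you are also more careful than the paper about the case where the contact set $F(\theta_{\mu_l})$ is a $1$-face, correctly distinguishing the counterclockwise endpoint $q^*=x_l(s_{l+}(\mu_l))$ from the nearest contact point $z_l$ of Definition~\ref{defzl}.

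Two remarks. First, the paper reads ``1--1 maps'' as \emph{bijections} onto $\RR^2\setminus K$: its proof explicitly establishes surjectivity, and this is used immediately afterwards (the left and right involutes \emph{through} an arbitrary $z\notin K$ are needed in Definition~\ref{defdefT_l(K,z)} for the $\mathfrak{G}$-fences). Your write-up proves injectivity only; however, your reconstruction already gives surjectivity for free --- given $z\notin K$, define $\mu_l,q^*,\lambda,\xi$ exactly as you do and observe that $i_{l,\xi}(\mu_l)=q^*-\lambda\,{\bf t}_{\mu_l}=z$ --- so one sentence would close the gap. Second, your argument that $z\notin\pa K$ (``forces $z=\xi$, i.e.\ $\sigma=0$'') can be shortened: since $J_z(\tau)=|i_{l,\xi}(\tau)-z|$ is nondecreasing on $[\vartheta_0^+,\vartheta]$ by Theorem~\ref{increasing sec}(iv) and vanishes at $\tau=\vartheta$, it vanishes identically, hence $\xi=z$; but then for $\tau$ slightly larger than $\vartheta_0^+$ one has ${\bf n}_\tau\notin N_K(\xi)$ and $s_{l+}(\tau)>s_0$, so $\frac{d}{d\tau}J_z^2>0$, a contradiction.
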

\begin{proof} Assume, in the proof, that $x_0\in \pa K, \theta_0\in G(x_0), \vartheta_0, s_0$ are fixed.
Let $z \not \in K$. The tangent sector to the cap body $K^z$ with vertex z has two maximal segments
$zz_l$, $zz_r$ on the sides that do not meet $K$ (except  at the end points  $z_l$,$z_r$).
Let $\vartheta_l $ such that $z_l=x_l(s_{l+}(\vartheta_l))$, and let $\overline{s}$ such that
$$|z-z_l|=s_{l+}(\vartheta_l)-\overline{s}.$$
Let $\xi_l=x_l(\overline{s})$, let $\overline{\vartheta}=\vartheta_l^+(\overline{s})$. From \eqref{remiv} 
 and from the definition of left involute \eqref{defleftinvolutegeneral} (with $\xi_l$ in place of $x_0$, 
$\overline{\vartheta}$ in place of $\vartheta_0^+$, $\overline{s}$ in place $s_0$)
$$z=i_{l,\xi_l}(\vartheta_l)$$
holds; thus the map $(\xi, \sigma) \to i_{l,\xi}(\sigma)$ is surjective. Moreover the map  it is also injective,
since the left involutes don't cross each other since they are parallel (see Remark \ref{parallel}). Similar proof holds 
fir the  right involutes.
\end{proof}
Let $\xi_l=x_l(\overline{s})$ be the starting point of  the  left involute
$i_{l,\xi_l}$  through $z$, defined in the previous theorem; similarly let   $\xi_r$ be the starting point of the
 right involute $i_{r,\xi_r}$ through $z$.
Let us notice that $i_{l,\xi_l}$ and $i_{r,\xi_r}$ meet each other in a countable ordered set of  points.
%  \begin{cor}\label{casesT(K,z)} Let $z\not \in K$. Then the following facts hold:
% \begin{enumerate}
%  \item[i)] if $|z-z_l|+|z-z_r|-|arc^+(z_r,z_l)|< |\pa K|$, then $z$ is the first point where the involutes
%    $i_{l,\xi_l}$ and $i_{r,\xi_r}$ cross each other;
%  \item[ii)] if $|z-z_l|+|z-z_r|-|arc^+(z_r,z_l)|= |\pa K|$, then $\xi_l=\xi_r$ and 
%    $i_{l,\xi_l}$ and $i_{r,\xi_r}$ have the same starting point  with  $z$ the first crossing point outside $K$;
%    \item[iii)] if $|z-z_l|+|z-z_r|-|arc^+(z_r,z_l)|> |\pa K|$, then the two involutes have crossing points
%    before $z$. 
%     \end{enumerate}  
%\end{cor}

\subsection{$\mathfrak{J}$-fence and $\mathfrak{G}$-fence}\label{fencesintro}
\begin{definition}\label{jfence} Let $ K $ be a convex body in $\R^2,  |\partial K | >0, $
$x_0 \in \pa K, \theta_0\in G(x_0), \theta_0=(\cos \vartheta_0, \sin \vartheta_0), s_0\in \RR$.  Let $i_l:=i_{l,x_0}, i_r:=i_{r,x_0}$. Let
$$\overline{y}= i_{l}(\tilde{\vartheta_l})=i_{r}(\tilde{\vartheta_r})\in \RR^2\setminus K,$$
be the first point where the two involutes cross each other (see Theorem \ref{thetaintersection}).
Let us define
$$
\mathfrak{J}_l(K, x_0 ) := \{ y \in \R^2 : y = t x_0 +
(1-t)i_{l}(\vartheta), \quad 0\leq t \leq 1,  \vartheta_0^+ \leq \vartheta \leq \tilde{\vartheta_l}\},
$$
$$\mathfrak{J}_r(K, x_0 ) := \{ y \in \R^2 : y = t x_0 +
(1-t)i_{r}(\vartheta), \quad 0\leq t \leq 1,   \tilde{\vartheta_r}\leq  \vartheta \leq  \vartheta_0^- \},$$
$$\mathfrak{J}(K, x_0 ):=(\mathfrak{J}_l (K, x_0 ) \cup \mathfrak{J}_r(K, x_0 ))\setminus Int(K) .$$
$\mathfrak{J}(K, x_0 )$ will be called the $\mathfrak{J}$-fence of $K$ at $x_0$.
\end{definition}
Let us notice that  $\mathfrak{J}_l (K, x_0 )$ and  $\mathfrak{J}_r(K, x_0 )$ are 
two convex bodies with in common the segment
$x_0 \overline{y}$ only.

From Theorem \ref{family of involutes} the starting point $\xi_l$ ($\xi_r$) 
of a left(right) involute is uniquely determined from  any point $z\not \in K$   of  the involute.
The arc of the points on the left (right) involute between the starting point and $z$ will be denoted by
$i_{l,\xi_l}^z $ ($i_{r,\xi_r}^z$ ), or $i_{l}^z$ ($i_{r}^z$) for short.
 For $y\preceq w$ let us denote with $i_l^{y,w} (i_
r^{y,w})$ the oriented arc of the left (right) involute between $y$ and $w$. 

Let us introduce now other regions which are bounded by left and right involutes. 

Let us fix the initial parameters $x_0,s_0,\theta_0,\vartheta_0$.
 
\begin{definition}\label{defdefT_l(K,z)} Given $z\in \RR^2\setminus K$, let $i_l=i_{l,\xi_l}$ ($i_r=i_{r,\xi_r}$) be
the left (right)
involute through $z$ with starting point
$\xi_l$ ($\xi_r$) and let $z_l(z_r)\in \pa K$ be as in Definition \ref{defzl}.  
Let  $\vartheta_{\xi_l}^+$ satisfying $x_l(s_{l+}(\vartheta_{\xi_l}^+))=\xi_l$. 
Let  $\vartheta_l > \vartheta_{\xi_l}^+$ be the smallest angle for which $x_l(s_{l+}(\vartheta_l))=z_l$.
Let us consider 
the parameterization \eqref{defleftinvolutegeneral}; let us define
\begin{equation}\label{defT_l(K,z)}
 \mathfrak{G}_l(K, z ):=\{tx_l(s_{l+}(\vartheta))+(1-t)i_l(\vartheta), \quad 0 < t < 1, \quad 
\vartheta_{\xi_l}^+<  \vartheta < \vartheta_l \}.
\end{equation}
If $i_l^{z}$ does not cross the open segment $zz_l$,  the  region $\mathfrak{G}_l(K, z )$  is an
open set bounded by the convex arc of left involute $i_l^{z}$, 
the segment  $zz_l$ and the convex arc of $\pa K$: $arc^+(\xi_l,z_l)$; otherwise let $w$ be the nearest point to $z$ where 
 $i_l^{z}$ crosses the open  segment $zz_l$; the region $\mathfrak{G}_l(K, z )$  is an
open set bounded by the  arc  $i_l^{w,z}$, the segment $wz$ and $\pa K$. Similarly let us define  $\mathfrak{G}_r(K, z )$.
\end{definition}
$\mathfrak{G}_l(K, z ), \mathfrak{G}_r(K, z )$ are  open and  bounded sets. 
Let us define:
\begin{equation}\label{defmathfrakG(K, z)}
\mathfrak{G}(K, z ):=Int(cl(\mathfrak{G}_l(K, z )\cup\mathfrak{G}_r(K, z ))).
\end{equation}
$\mathfrak{G}(K, z )$ is an open, bounded, connected set. 
$\mathfrak{G}(K, z )$ will be called the $\mathfrak{G}$-fence of $K$ at $z$.

\begin{rem} If $z$ is the first crossing point of $i_l$ and $i_r$ and $\xi_l=\xi_r$,  then
 $\mathfrak{G}(K,z)=Int (\mathfrak{J}(K,\xi_l))$. 
\end{rem}
  Let us conclude this section with the following result, which follows from Theorem \ref{KntoK}.
\begin{theorem}\label{stability j-fences} 
Let  $K$ be limit of a sequence of  convex bodies $K^{(n)}$,
 $x_0=\lim x_0^{(n)}$, $ x_0^{(n)}\in \pa K^{(n)}$. Then
$$\mathfrak{J}(K^{(n)},x_0^{(n)})\to \mathfrak{J}(K,x_0).$$
Moreover if $z\not \in K, z=\lim z^{(n)}, z^{(n)}\not \in K^{(n)}$, 
then
$$cl(\mathfrak{G}(K^{(n)},z^{(n)})) \to cl(\mathfrak{G}(K,z)).$$
 \end{theorem}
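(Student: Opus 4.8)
The plan is to derive everything from the convergence statement in Theorem~\ref{KntoK}, since the fences are built entirely out of involutes and of arcs of $\pa K$, and both of these are known to be stable under uniform convergence of convex bodies. First I would recall (Remark~\ref{snconverge}) that uniform convergence $K^{(n)}\to K$ is equivalent to uniform convergence of the support functions $h^{(n)}\to h$, and that convergence of the endpoints of a connected boundary arc forces convergence of the arcs themselves together with their lengths. From $x_0^{(n)}\to x_0$ and the continuity of the maps $s_{l+},\vartheta_{l+}$ (via the weak form \eqref{weakds=dthaeta} and the integral formulas \eqref{vaules+conh}, \eqref{vaules-conh}) one gets $\vartheta_0^{+,(n)}\to\vartheta_0^+$ and $\vartheta_0^{-,(n)}\to\vartheta_0^-$ for the initial parameters of the left and right involutes.

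Next I would invoke Theorem~\ref{KntoK} directly: the left involutes $i^{K^{(n)}}_{l,x_0^{(n)}}$ converge uniformly to $i_{l,x_0}$ on compact subsets of $[\vartheta_0^+,+\infty)$, and similarly (by the reflection remark after Definition~\ref{defGF}, or by the analogous right-hand version of Theorem~\ref{KntoK}) the right involutes $i^{K^{(n)}}_{r,x_0^{(n)}}$ converge uniformly to $i_{r,x_0}$. Then I would show that the crossing parameters converge: $\tilde\vartheta_l^{(n)}\to\tilde\vartheta_l$ and $\tilde\vartheta_r^{(n)}\to\tilde\vartheta_r$. For this I would use the characterization from the proof of Theorem~\ref{thetaintersection} — the crossing point is detected by the equation $\phi^{(n)}(\overline{\vartheta_l})=L^{(n)}$ — together with the strict monotonicity of $\phi^{(n)}$ (Claim~2 there), which gives a unique root depending continuously on the data; uniform convergence of $i_l^{(n)}, i_r^{(n)}$ and of $L^{(n)}=|\pa K^{(n)}|$ then yields convergence of the roots. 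With the involute arcs and their endpoint parameters converging, the sets $\mathfrak{J}_l(K^{(n)},x_0^{(n)})$ and $\mathfrak{J}_r(K^{(n)},x_0^{(n)})$, being convex bodies described as unions of segments joining $x_0^{(n)}$ to points of the involute arc, converge in the Hausdorff sense to $\mathfrak{J}_l(K,x_0)$ and $\mathfrak{J}_r(K,x_0)$; since $\mathfrak{J}=(\mathfrak{J}_l\cup\mathfrak{J}_r)\setminus Int(K)$ and $K^{(n)}\to K$, taking the union and removing the (converging) interiors gives $\mathfrak{J}(K^{(n)},x_0^{(n)})\to\mathfrak{J}(K,x_0)$.

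For the $\mathfrak{G}$-fence I would proceed analogously but with a little more care. Given $z^{(n)}\to z$ with $z^{(n)}\notin K^{(n)}$, the starting point $\xi_l^{(n)}$ of the left involute through $z^{(n)}$ is, by Theorem~\ref{family of involutes}, uniquely determined, and the construction in that proof (choose $\vartheta_l^{(n)}$ with $z_l^{(n)}=x_l^{(n)}(s_{l+}^{(n)}(\vartheta_l^{(n)}))$ and $\overline s^{(n)}$ with $|z^{(n)}-z_l^{(n)}|=s_{l+}^{(n)}(\vartheta_l^{(n)})-\overline s^{(n)}$) depends continuously on $z^{(n)}$ and on $h^{(n)}$, so $\xi_l^{(n)}\to\xi_l$ and similarly $\xi_r^{(n)}\to\xi_r$, and the left/right support lines to $K^{(n)}$ through $z^{(n)}$ (hence $z_l^{(n)}\to z_l$, $z_r^{(n)}\to z_r$) converge. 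Then $\mathfrak{G}_l(K^{(n)},z^{(n)})$ is the open region bounded by the converging involute arc $i_l^{z^{(n)}}$, the converging segment $z^{(n)}z_l^{(n)}$ (or its truncation at the first crossing point $w^{(n)}$, which also converges by the uniform convergence of the involute), and the converging arc $arc^+(\xi_l^{(n)},z_l^{(n)})$ of $\pa K^{(n)}$; so $cl(\mathfrak{G}_l(K^{(n)},z^{(n)}))\to cl(\mathfrak{G}_l(K,z))$ in the Hausdorff metric, and likewise on the right. Finally, taking $\mathfrak{G}=Int(cl(\mathfrak{G}_l\cup\mathfrak{G}_r))$, Hausdorff convergence of the two closed pieces gives $cl(\mathfrak{G}(K^{(n)},z^{(n)}))\to cl(\mathfrak{G}(K,z))$.

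The main obstacle I anticipate is not any single hard estimate but the bookkeeping needed to pass from "support functions converge uniformly" to "all the auxiliary parameters ($\vartheta_0^\pm$, the crossing angles $\tilde\vartheta_l,\tilde\vartheta_r$, the starting points $\xi_l,\xi_r$, the truncation points $w$) converge." The delicate points are: (i) the functions $s_{l+},\vartheta_{l+}$ are only cadlag, so one must be careful at directions where $\pa K$ has a $1$-face — here I would use the right-continuity and the integral representations \eqref{vaules+conh}, \eqref{vaules-conh}, and, if the limit set has a genuine $1$-face at the relevant direction, argue that the construction still picks out the unique geometrically correct point; (ii) showing the crossing point of the two involutes is a \emph{transversal} intersection in a quantitative enough sense to guarantee stability — this is exactly what the strict monotonicity of $\phi$ in Claim~2 of Theorem~\ref{thetaintersection} provides, so I would lean on that rather than re-prove it. Everything else is a routine application of Remark~\ref{snconverge} and Theorem~\ref{KntoK}.
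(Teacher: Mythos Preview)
Your proposal is correct and follows exactly the route the paper takes: the paper simply states that the theorem ``follows from Theorem~\ref{KntoK}'' and gives no further argument, so your detailed unpacking of how the convergence of involutes (Theorem~\ref{KntoK}) and of boundary arcs (Remark~\ref{snconverge}) forces convergence of the auxiliary parameters and hence of the fences is precisely the work the paper leaves implicit. The technical caveats you flag (cadlag behaviour of $s_{l+}$, transversality of the crossing via the strict monotonicity of $\phi$) are the right places to be careful, and the paper does not address them either.
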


\section{ Bounding regions for SDC in the plane}\label{fences}
Let us assume that $x_0$ is the end point of one of the following sets
\begin{enumerate}
 \item[a)] a steepest descent curve  $\ga$, satisfying  \eqref{defselexpwithder} and \eqref{expandingproperty};
 \item[b)] $\ga^K$: a self-distancing curve from a convex body $K$, see Definition \ref{defSEC+}.
\end{enumerate}
The following questions arise: can one extend   $\ga$, $\ga^K$ beyond $x_0$?
Which regions delimit that extension? Which regions are allowed and which are forbidden?

\begin{lemma}\label{leftinvincludedinleft}
 Let $z\in \RR^2\setminus K$. 
  If $u\in \mathfrak{G}_l(K, z )$
 then  the arc $i_{l}^{u}$  of the left involute to $K$ ending at $u$,  is contained  in
 $\mathfrak{G}_l(K, z )$. Similarly if $u\in \mathfrak{G}_r(K, z )$, 
  then $i_r^{u}\subset \mathfrak{G}_r(K, z )$.
\end{lemma}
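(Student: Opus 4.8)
The plan is to exploit the ``parallel'' structure of the family of left involutes (Remark \ref{parallel} and Theorem \ref{family of involutes}) together with the description of $\mathfrak{G}_l(K,z)$ as the region swept by the segments joining $i_l(\vartheta)$ to $x_l(s_{l+}(\vartheta))$ for $\vartheta_{\xi_l}^+<\vartheta<\vartheta_l$. Since every point $z\not\in K$ lies on exactly one left involute $i_{l,\xi_l}$ (Theorem \ref{family of involutes}), given $u\in\mathfrak{G}_l(K,z)$ there is a unique left involute $i_{l,\xi_l'}$ through $u$, and $i_l^u$ is the arc of that involute from its starting point $\xi_l'$ up to $u$. I would first show that the starting point $\xi_l'$ of the involute through $u$ lies on $arc^+(\xi_l,z_l)\subset\pa K$, i.e.\ $\overline{s}'$ lies between $\overline{s}$ (the parameter of $\xi_l$) and $s_{l+}(\vartheta_l)-|z-z_l|$-type bounds; this is where the fact that $u$ is ``inside'' the region, hence on a segment $tx_l(s_{l+}(\vartheta))+(1-t)i_l(\vartheta)$ with $0<t<1$ and $\vartheta$ in the relevant range, is used.

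\textbf{Key steps.} First, fix $u\in\mathfrak{G}_l(K,z)$; by definition of $\mathfrak{G}_l(K,z)$ (Definition \ref{defdefT_l(K,z)}) there are $t\in(0,1)$ and $\vartheta\in(\vartheta_{\xi_l}^+,\vartheta_l)$ with $u=tx_l(s_{l+}(\vartheta))+(1-t)i_l(\vartheta)$. Using the defining formula \eqref{defleftinvolutegeneral}, $u=x_l(s_{l+}(\vartheta))+(1-t)\bigl(s_{l+}(\vartheta)-s_0\bigr){\bf t}_\vartheta$, so $u$ lies on the left involute of $K$ starting at the point $x_l(\overline{s}')$ with $\overline{s}'=s_0+t(s_{l+}(\vartheta)-s_0)$, i.e.\ $u=i_{l,\xi_l'}(\vartheta)$ with $\xi_l'=x_l(\overline{s}')$ and $s_0<\overline{s}'<s_{l+}(\vartheta)$. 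Second, the arc $i_l^u=i_{l,\xi_l'}^u$ is precisely $\{i_{l,\xi_l'}(\tau):\vartheta_{l+}(\overline{s}')\le\tau\le\vartheta\}$, and by the parallelism relation $i_{l,\xi_l'}(\tau)-i_{l,x_0}(\tau)=(\overline{s}'-s_0){\bf t}_\tau$ (Remark \ref{parallel}) each such point can be written as $i_{l,x_0}(\tau)+(\overline{s}'-s_0){\bf t}_\tau=x_l(s_{l+}(\tau))+\bigl((s_{l+}(\tau)-s_0)-(\overline{s}'-s_0)\bigr)^{\!}$-shifted combination; more simply, writing $i_{l,\xi_l'}(\tau)=x_l(s_{l+}(\tau))+(s_{l+}(\tau)-\overline{s}'){\bf t}_\tau$ and noting $0\le s_{l+}(\tau)-\overline{s}'\le s_{l+}(\tau)-s_0$ for $\tau$ in the arc, each point of $i_l^u$ is a convex combination $t(\tau)x_l(s_{l+}(\tau))+(1-t(\tau))i_{l,x_0}(\tau)$ with $t(\tau)\in(0,1]$ and $\vartheta_{\xi_l}^+\le\tau\le\vartheta<\vartheta_l$. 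Third, I must check these parameters stay in the admissible range defining $\mathfrak{G}_l(K,z)$; the upper bound $\tau\le\vartheta<\vartheta_l$ is immediate, and the lower bound $\tau\ge\vartheta_{\xi_l}^+$ holds because the involute through $u$ starts at a point of $arc^+(\xi_l,\cdot)$, never before $\xi_l$. This places the whole arc $i_l^u$ inside $\overline{\mathfrak{G}_l(K,z)}$; a short argument using $t(\tau)<1$ on the relative interior (or the openness of $\mathfrak{G}_l(K,z)$ together with the fact that $i_l^u$ cannot touch the boundary arcs $zz_l$, $arc^+(\xi_l,z_l)$, or the bounding involute $i_l^z$ without contradicting the parallelism/non-crossing of involutes) upgrades this to $i_l^u\subset\mathfrak{G}_l(K,z)$. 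The statement for the right involute follows by the reflection remark after Definition of $i_{r,x_0}$.

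\textbf{Main obstacle.} The delicate point is the case distinction already flagged in Definition \ref{defdefT_l(K,z)}: when the involute arc $i_l^z$ itself crosses the segment $zz_l$, the region $\mathfrak{G}_l(K,z)$ is instead bounded by $i_l^{w,z}$, the segment $wz$, and $\pa K$, and one must verify that the arc $i_l^u$ for $u$ in this truncated region still does not escape — here one uses that distinct left involutes of the same $K$ are parallel and hence non-crossing (Remark \ref{parallel}), so the involute through $u$ cannot cross $i_l^z$, and that $i_l^u$ cannot cross $\pa K$ since by Remark \ref{i.ii,iii}(ii) its distance from the corresponding boundary point is the (nonnegative) arc length. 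Handling the boundary segment $wz$ (resp.\ $zz_l$) requires the distancing-from-$K$ monotonicity of Theorem \ref{increasing sec} to rule out $i_l^u$ re-entering across it. I expect the bookkeeping of which $\vartheta$-range and which starting points $\xi_l'$ occur, and the verification in the truncated-region case, to be the bulk of the work; the core geometric fact — that $\mathfrak{G}_l(K,z)$ is a union of initial arcs of parallel involutes — makes the inclusion essentially structural once the parameters are pinned down.
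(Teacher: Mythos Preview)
Your approach is correct and essentially the same as the paper's: both write $u$ as a point on a tangent segment $\tau x_l(s_{l+}(\overline{\vartheta_l}))+(1-\tau)i_l(\overline{\vartheta_l})$, identify the left involute through $u$ as parallel to $i_l^z$ (Remark \ref{parallel}), and conclude that each tangent segment from a point of $i_l^u$ is contained in the corresponding tangent segment from $i_l^z$, hence $i_l^u\subset\mathfrak{G}_l(K,z)$. The paper compresses this into three sentences and does not separately treat the truncated-region case you flag, since the ``tangent segment from $i_l^u$ is a subsegment of the tangent segment from $i_l^z$'' observation covers both cases uniformly.
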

\begin{proof} Since $u\in \mathfrak{G}_l(K, z )$,
 by  \eqref{defT_l(K,z)} there exist $\overline{\vartheta_l}\in (\vartheta_{\xi_l}^+ ,\vartheta_l )$, $\tau \in (0,1)$ such that
 $$u=\tau x_l(s_{l+}(\overline{\vartheta_l}))+(1-\tau)i_l(\overline{\vartheta_l}).$$
 Then the arc $i_l^{u}$ is parallel to an arc of the left involute $i_l$ (through $z$) for 
 $\vartheta \in (\vartheta_{\xi_l}^+ ,\overline{\vartheta_l} )$. Then any 
 left tangent segment to $K$ from
  a point of $i_l^{u}$ is contained in the left tangent segment  from  the corresponding point 
 of
  $i_{l}^{z}$.  \end{proof}
\begin{lemma}\label{rightinvincludedinleft}
 Let $z\in \RR^2\setminus K$ and let $u\in \mathfrak{G}_l(K, z )$. 
 There are two possible cases:
 \begin{enumerate}
  \item[i)] if the right involute ending at $u$ does not cross 
  the tangent segment $z_lz$ or it crosses $z_lz$ at a point $q\in \mathfrak{G}_l(K, z )$, then in both cases
  $i_r^{u}\subset \mathfrak{G}_l(K, z )$;
  \item[ii)] if the right involute ending at $u$ crosses 
  the tangent segment $z_lz$ at a  point $q\in z_lz\cap \pa \mathfrak{G}_l(K, z )$, then
    $i_r^{q,u}\setminus \{q\}\subset \mathfrak{G}_l(K, z )$.
 \end{enumerate}
\end{lemma}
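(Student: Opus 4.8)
The statement concerns a point $u \in \mathfrak{G}_l(K,z)$ and the behavior of the right involute $i_r^u$ ending at $u$, relative to the tangent segment $z_l z$ and the region $\mathfrak{G}_l(K,z)$. The plan is to exploit the geometry of $\mathfrak{G}_l(K,z)$ as described in Definition \ref{defdefT_l(K,z)}: its boundary consists of the convex arc $i_l^z$ of the left involute, the segment $z z_l$ (or a shortened version $wz$), and the convex arc $arc^+(\xi_l, z_l)$ of $\pa K$. Since $u$ lies in the interior of this region, I would track the right involute $i_r^u$ backwards from $u$ and argue, using the distancing-from-$K$ property and the fact that involutes do not cross (Remark \ref{parallel} and Theorem \ref{family of involutes}), that it cannot escape through the involute arc $i_l^z$ nor through the boundary arc of $\pa K$, so the only possible exit is through the tangent segment $z_l z$.

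First I would write $u = \tau x_l(s_{l+}(\overline{\vartheta}_l)) + (1-\tau)i_l(\overline{\vartheta}_l)$ for suitable $\overline{\vartheta}_l \in (\vartheta_{\xi_l}^+, \vartheta_l)$ and $\tau \in (0,1)$, exactly as in the proof of Lemma \ref{leftinvincludedinleft}. The point $u$ lies on the left tangent segment from $x_l(s_{l+}(\overline{\vartheta}_l)) \in \pa K$. The right involute $i_r^u$ ending at $u$ approaches $u$ from the other side; by Corollary \ref{invasseck} it is a self-contracting curve from $K$, so the distance of its points to $z_l$ (which lies in $K$) is monotone along $i_r^u$. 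Next I would observe that $i_r^u$, being a right involute of $\pa K$, cannot cross the arc $arc^+(\xi_l, z_l) \subset \pa K$ transversally (involutes meet $\pa K$ only at their starting point), and it cannot cross the left involute arc $i_l^z$ more than in the countable ordered crossing set guaranteed by Theorem \ref{thetaintersection}; a careful orientation argument (left involute counterclockwise, right involute clockwise) shows that near $u$ the right involute lies on the $\mathfrak{G}_l$ side. Hence the two cases of the lemma correspond exactly to whether $i_r^u$ reaches the segment $z_l z$ before reaching $u$ and, if so, at which point of $z_l z$.

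In case (i), if $i_r^u$ does not meet $z_l z$ at all, or meets it only at an interior point $q$ of the segment that already lies in $\mathfrak{G}_l(K,z)$, then the whole arc stays trapped in the region bounded by $i_l^z$, $z_lz$, and $arc^+(\xi_l, z_l)$, so $i_r^u \subset \mathfrak{G}_l(K,z)$. In case (ii), if $i_r^u$ hits $z_l z$ precisely at a boundary point $q \in z_l z \cap \pa\mathfrak{G}_l(K,z)$, then the portion of the arc strictly after $q$, namely $i_r^{q,u}\setminus\{q\}$, lies in the open region, while the portion before $q$ may exit; this gives the claimed $i_r^{q,u}\setminus\{q\} \subset \mathfrak{G}_l(K,z)$. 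I would make the trapping rigorous by a connectedness argument: $i_r^u\setminus(\text{exit point})$ is connected, starts inside $\mathfrak{G}_l(K,z)$ near $u$, and cannot touch $\pa\mathfrak{G}_l(K,z)$ except along $z_lz$, so it stays in the open set until it first hits $z_lz$.

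The main obstacle I anticipate is the delicate bookkeeping of \emph{which} segment of $z_l z$ the right involute can cross and with what transversality, since the arc $i_r^u$ could in principle wind around and the segment $z_l z$ is itself part of $\pa\mathfrak{G}_l(K,z)$. The key technical point will be to show that once $i_r^u$ is inside $\mathfrak{G}_l(K,z)$ near $u$, the distancing/contracting monotonicity (Proposition \ref{lemmaecissec} applied to the convex hull generated by $\pa K$ and the relevant arc, together with Theorem \ref{increasing sec}(iii)--(iv)) forbids it from returning to $\pa K$ or from crossing the involute arc $i_l^z$ a second time, so that $z_l z$ is genuinely the only possible escape route. Everything else is a matter of carefully invoking the parallelism of involutes and the convexity of the bounding arcs established in Sections 3 and \ref{fencesintro}.
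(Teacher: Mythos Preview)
Your overall strategy matches the paper's: show that $i_r^u$ can exit $\mathfrak{G}_l(K,z)$ only through the segment $z_l z$, and then conclude by connectedness. However, the crucial step --- ruling out that $i_r^u$ crosses the bounding left-involute arc $i_l^z$ --- is not properly justified.

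You invoke Theorem \ref{thetaintersection} for a ``countable ordered crossing set,'' but that theorem concerns left and right involutes with a \emph{common} starting point $x_0$ and actually proves \emph{uniqueness} of the crossing. Here $i_l^z$ starts at $\xi_l$ and $i_r^u$ starts at $\xi_r(u)$, generically different points of $\pa K$, so the theorem does not apply. Your ``careful orientation argument'' near $u$ is only announced, not carried out, and the monotonicity you set up is with respect to $z_l$, for $i_r^u$ alone; you never establish the companion monotonicity along $i_l^z$ that a double-crossing argument would need.

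The paper closes this gap with one sharp choice of reference point: take $\xi_r(u)\in\pa K$, the starting point of the right involute through $u$. By Theorem \ref{increasing sec}(iv), the distance $J_{\xi_r(u)}$ is nondecreasing along \emph{both} $i_l^z$ and $i_r^u$. Since the two endpoints of $i_r^u$ lie in (the closure of) $\mathfrak{G}_l(K,z)$, these two simultaneous monotonicities force $i_r^u$ not to cross $i_l^z$ twice, hence not to exit through the left-involute portion of $\pa\mathfrak{G}_l(K,z)$. If you replace your reference point $z_l$ by $\xi_r(u)$ and apply Theorem \ref{increasing sec}(iv) to both involutes, your connectedness argument becomes the paper's proof.
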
 
\begin{proof} Since the starting point $\xi_r(u)$ of the right involute ending at $u$ is on $\pa K$,
the distance from $\xi_r(u)$ to  a point of  the left involute $i_l^z$ is not decreasing, see iv) of Theorem \ref{increasing sec};
similarly
 the distance from $\xi_r(u)$ to  a  point of  $i_r^{u}$ is not decreasing. In the case i) the arc $i_r^{u}$ has
its end points in $\mathfrak{G}_l(K, z )$
and by the above distance property it can not cross two times the left involute, then it can not cross the boundary of 
$\mathfrak{G}_l(K, z )$, therefore $i_r^{u}\subset \mathfrak{G}_l(K, z )$; similarly in the case ii) 
 the arc $i_r^{q,u}$ can not cross the boundary of $\mathfrak{G}_l(K, z )$ at most than in $q$; therefore  
all the points of this  arc, except than $q$,  belong to $\mathfrak{G}_l(K, z )$.
\end{proof}

From the previous lemma it follows that
\begin{theorem}\label{lefttrapinlefttrap} Let $z \not \in K$. The following inclusions  hold:
\begin{enumerate}
 \item[a)]if  $u\in \mathfrak{G}_l(K,z) $, then
\begin{equation}\label{cllefttrapinlefttrap}
 cl(\mathfrak{G}_l(K,u)) \setminus \pa K \subset \mathfrak{G}_l(K,z);
 \end{equation}
 \item[b)] if  $u\in \mathfrak{G}_r(K,z) $, then
\begin{equation}\label{clrighttrapinrighttrap}
 cl(\mathfrak{G}_r(K,u)) \setminus \pa K \subset \mathfrak{G}_r(K,z);
 \end{equation}
 \item[c)] if $u \in \mathfrak{G}(K,z)$,  then
 \begin{equation}
  \label{cltrapinclusion}
   cl(\mathfrak{G}(K,u))\setminus \pa K  \subset \mathfrak{G}(K,z).
 \end{equation}
\end{enumerate}
 \end{theorem}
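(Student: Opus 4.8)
The plan is to read off all three inclusions from Lemmas~\ref{leftinvincludedinleft} and~\ref{rightinvincludedinleft} (and their mirror images) together with the fact, built into Definition~\ref{defdefT_l(K,z)}, that each fence is the union of tangent segments issued from an involute arc. First I would record the picture: $\mathfrak{G}_l(K,z)$ is the union of the open left tangent segments to $K$ that run from the points of the left involute arc $i_l^{z}$ to their tangency points on $arc^+(\xi_l,z_l)\subset\pa K$, and $\pa\mathfrak{G}_l(K,z)=i_l^{z}\cup(zz_l)\cup arc^+(\xi_l,z_l)$, with $zz_l$ replaced by the truncated segment $wz$ in the re-crossing case; the same holds for $\mathfrak{G}_r$ and for the fences based at $u$.

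For (a), let $u\in\mathfrak{G}_l(K,z)$. Then $u$ sits at an interior parameter $t=\tau\in(0,1)$ on the generating segment of $\mathfrak{G}_l(K,z)$ at some angle $\overline\vartheta\in(\vartheta_{\xi_l}^{+},\vartheta_l)$; the starting point $\xi_l(u)$ of the left involute through $u$ lies on $arc^+(\xi_l,z_l)$; and $i_l^{u}$ and $i_l^{z}$ are parallel (Remark~\ref{parallel}). From this, every generating segment of $\mathfrak{G}_l(K,u)$ is a sub-segment — with the same $\pa K$-endpoint — of a generating segment of $\mathfrak{G}_l(K,z)$, which is precisely the observation ending the proof of Lemma~\ref{leftinvincludedinleft}; this gives $\mathfrak{G}_l(K,u)\subset\mathfrak{G}_l(K,z)$ outright. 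Passing to the closure adds only the arc $i_l^{u}$, which lies in $\mathfrak{G}_l(K,z)$ off $\pa K$ by Lemma~\ref{leftinvincludedinleft} (its far endpoint being $u$), the segment $uu_l$, a closed portion $t\in[\tau,1]$ of one generating segment and hence contained in $\mathfrak{G}_l(K,z)$ off $\pa K$, and a sub-arc of $\pa K$; so \eqref{cllefttrapinlefttrap} holds, and the re-crossing case is identical with $w$ in place of $z$. For (b) I would apply (a) to $\rho(K),\rho(z),\rho(u)$ and use the reflection identity $i^{K}_{r,x_0}=\rho\big(i^{\rho(K)}_{l,\rho(x_0)}\big)$, which swaps left and right fences.

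For (c) I would write $\mathfrak{G}(K,v)=Int\big(cl(\mathfrak{G}_l(K,v)\cup\mathfrak{G}_r(K,v))\big)$ for $v=z,u$ and reduce everything to
$$\mathfrak{G}_l(K,u)\cup\mathfrak{G}_r(K,u)\subset cl\big(\mathfrak{G}_l(K,z)\cup\mathfrak{G}_r(K,z)\big),$$
with $i_l^{u}$ and $i_r^{u}$ landing, off $\pa K$, in the open set $\mathfrak{G}(K,z)$: then $Int\circ cl$ gives $\mathfrak{G}(K,u)\subset\mathfrak{G}(K,z)$ and the boundary bookkeeping of (a) gives \eqref{cltrapinclusion}. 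If $u\in\mathfrak{G}_l(K,z)$, part (a) handles $\mathfrak{G}_l(K,u)$, and Lemma~\ref{rightinvincludedinleft} controls $i_r^{u}$: it either stays in $\mathfrak{G}_l(K,z)$ or crosses the tangent segment $z_lz$ exactly once, the part past the crossing point remaining in $\mathfrak{G}_l(K,z)$ while the part before it is pushed into $\mathfrak{G}_r(K,z)$; since $\mathfrak{G}_r(K,u)$ is swept by tangent segments from $i_r^{u}$, it lies in $cl(\mathfrak{G}_l(K,z)\cup\mathfrak{G}_r(K,z))$. The case $u\in\mathfrak{G}_r(K,z)$ is symmetric (using the mirror of Lemma~\ref{rightinvincludedinleft}), and the residual case where $u$ lies on the internal seam $\mathfrak{G}(K,z)\setminus(\mathfrak{G}_l(K,z)\cup\mathfrak{G}_r(K,z))$ I would settle by approximating $u$ from inside $\mathfrak{G}_l(K,z)$ or $\mathfrak{G}_r(K,z)$ and appealing to the continuity of fences (Theorem~\ref{stability j-fences}).

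The step I expect to fight hardest with is exactly this mixed interaction in (c): verifying that a right involute issued from a point of the \emph{left} fence of $z$ — together with the right sub-fence it spans — cannot escape $cl(\mathfrak{G}(K,z))$, and carefully tracking the re-crossing sub-cases in the definitions of $\mathfrak{G}_l$ and $\mathfrak{G}$. This is what Lemma~\ref{rightinvincludedinleft} is tailored for; granting it, each of (a)–(c) collapses to the single geometric fact that a generating segment of the smaller fence is a sub-segment of one of the larger, possibly after switching between its left and right halves.
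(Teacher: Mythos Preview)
Your proposal is correct and follows essentially the same route as the paper: parts (a) and (b) are read off from Lemma~\ref{leftinvincludedinleft} and its mirror, and part (c) is handled by the case analysis of Lemma~\ref{rightinvincludedinleft}, tracking whether $i_r^{u}$ stays in $\mathfrak{G}_l(K,z)$ or crosses the tangent segment $zz_l$ into $\mathfrak{G}_r(K,z)$. Your treatment is in fact a bit more scrupulous than the paper's in two respects: you spell out the closure bookkeeping (identifying which boundary pieces of $cl(\mathfrak{G}_l(K,u))$ land where), and you explicitly address the residual seam case $u\in\mathfrak{G}(K,z)\setminus(\mathfrak{G}_l(K,z)\cup\mathfrak{G}_r(K,z))$ via approximation and Theorem~\ref{stability j-fences}, a case the paper's proof passes over in silence.
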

 \begin{proof}
  By Lemma \ref{leftinvincludedinleft} the left involute that bounds $\mathfrak{G}_l(K,u)$ is inside $\mathfrak{G}_l(K,z))$,
  then \eqref{cllefttrapinlefttrap} is proved. Inclusion \eqref{clrighttrapinrighttrap} is proved similarly. 
  Let $u \in \mathfrak{G}(K,z)=Int(cl(\mathfrak{G}_l(K, z)\cup\mathfrak{G}_r(K, z )))$ and let us consider 
  $u \in \mathfrak{G}_l(K, z)$, then 
  in case i) of Lemma \ref{rightinvincludedinleft}  also the open arc of the right involute $i_r^{u}$ is inside 
  $\mathfrak{G}_l(K,z))\subset \mathfrak{G}(K,z))$. Besides $i_l^u \subset \pa \mathfrak{G}_l(K,u) $, then
  \eqref{cltrapinclusion} is trivial.  
  In case ii) of Lemma \ref{rightinvincludedinleft} 
  the open arc $i_r^{q,u}$  is inside $\mathfrak{G}_l(K,z))$. On the other hand $q$
  is inside $\mathfrak{G}_r(K,z))$
  and by \eqref{clrighttrapinrighttrap} the arc $i_r^q\subset i_r^u$ is in $\mathfrak{G}_r(K,z))\subset
  \mathfrak{G}(K,z))$. Similar arguments  hold if $u\in  \mathfrak{G}_r(K,z))$.
  Then
  \eqref{cltrapinclusion} holds in this case too. 
 \end{proof}

 \begin{lemma} \label{*polygonal}Let $w\not \in K$. Let $\eta$ be polygonal deleted $SDC_K$ 
  with end point 
 $y\in  \mathfrak{G}(K,w)$.
Then 
 \begin{equation}\label{eta_wintrapG}
  \eta \subset \mathfrak{G}(K,w),
 \end{equation}
 and
 \begin{equation}\label{eta_wincltrapG}
  \eta \subset cl(\mathfrak{G}(K,y)).
 \end{equation}
\end{lemma}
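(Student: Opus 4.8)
The plan is to induct on the number of vertices of the polygonal deleted $SDC_K$ curve $\eta$, using the previously established nested-fence inclusions as the inductive engine. Write $\eta$ as a concatenation of segments with successive vertices $y_0, y_1, \dots, y_m = y$ where $y_0$ is the initial point; since $\eta$ is a deleted $SDC_K$ its points are ordered so that $y_0 \preceq y_1 \preceq \cdots \preceq y_m$, and the distancing-from-$K$ property holds along each segment. The base case is a single segment $\eta = [y_{m-1}, y_m]$ with $y_m \in \mathfrak{G}(K,w)$; the induction step will be to show that if the sub-curve $\eta' = [y_0,\dots,y_{m-1}]$ lies in $\mathfrak{G}(K, y_{m-1})$ then adding the last segment and using $y_{m-1} \in \overline{\mathfrak{G}(K,w)}$ (or its interior) keeps everything inside $\mathfrak{G}(K,w)$ via Theorem~\ref{lefttrapinlefttrap}(c).

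First I would prove the single-segment case carefully: let $y = y_m \in \mathfrak{G}(K,w)$ and $y_{m-1}$ its predecessor. Because the segment $[y_{m-1}, y]$ has the distancing-from-$K$ property, the vector $y - y_{m-1}$ points "away" from $K$ in the precise sense of \eqref{defselexpwithderfromK}, and I claim this forces $y_{m-1}$ to lie on the correct side; concretely $y_{m-1}$ should lie in the region swept between the two tangent segments and the involute arcs bounding $\mathfrak{G}_l(K,y)$ or $\mathfrak{G}_r(K,y)$. The key geometric fact is that a segment through $y$ satisfying the distancing condition, traversed backwards from $y$, cannot exit $\overline{\mathfrak{G}(K,y)}$ — because the left and right involutes $i_l^y$, $i_r^y$ that bound $\mathfrak{G}(K,y)$ are themselves $SDC_K$ (Corollary~\ref{invasseck}), and the distancing-from-$K$ inequality controls on which side of these bounding involutes the segment must stay. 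So $[y_{m-1},y] \subset \overline{\mathfrak{G}(K,y)}$, which gives the analogue of \eqref{eta_wincltrapG} in the base case; combined with $y \in \mathfrak{G}(K,w)$ and Theorem~\ref{lefttrapinlefttrap}(c), $\overline{\mathfrak{G}(K,y)} \setminus \pa K \subset \mathfrak{G}(K,w)$, and one checks the points on $\pa K$ cause no trouble since $\eta \subset \R^2 \setminus \relint K$ meets $\pa K$ only in the allowed way, giving $[y_{m-1},y]\subset\mathfrak{G}(K,w)$.

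For the inductive step: assume the result for polygonal deleted $SDC_K$ with $m-1$ segments. Given $\eta$ with $m$ segments and end point $y \in \mathfrak{G}(K,w)$, apply the base-case argument to the last segment to get $y_{m-1} \in \overline{\mathfrak{G}(K,y)}$, hence (again by Theorem~\ref{lefttrapinlefttrap}(c) and handling $\pa K$) $y_{m-1} \in \mathfrak{G}(K,w)$, unless $y_{m-1}$ happens to land exactly on $\pa K$ — a boundary case I would dispatch by noting that then the curve restarts from $\pa K$ and one can re-apply the argument, or by a small perturbation/limiting argument using Theorem~\ref{stability j-fences}. Now $\eta' = [y_0,\dots,y_{m-1}]$ is itself a polygonal deleted $SDC_K$ with $m-1$ segments and end point $y_{m-1} \in \mathfrak{G}(K,w)$, so the inductive hypothesis gives $\eta' \subset \mathfrak{G}(K,w)$, proving \eqref{eta_wintrapG}. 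For \eqref{eta_wincltrapG}, apply the inductive hypothesis instead with $w$ replaced by $y_{m-1}$ and then use $\overline{\mathfrak{G}(K,y_{m-1})} \subset \overline{\mathfrak{G}(K,y)}$, which follows from part (c) of Theorem~\ref{lefttrapinlefttrap} since $y_{m-1} \in \overline{\mathfrak{G}(K,y)}$ — together with the base-case inclusion $[y_{m-1},y] \subset \overline{\mathfrak{G}(K,y)}$, this shows all of $\eta$ lies in $\overline{\mathfrak{G}(K,y)}$.

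The main obstacle I expect is the base case, specifically proving that a single segment ending at $y$ with the distancing-from-$K$ property stays in $\overline{\mathfrak{G}(K,y)}$: this requires understanding precisely how the distancing condition \eqref{defselexpwithderfromK} at the point $y$ pins the backward direction relative to the two involute boundary arcs $i_l^y$ and $i_r^y$, i.e.\ that the tangent segments $yz_l$, $yz_r$ and the involutes genuinely enclose every admissible backward segment. The secondary nuisance is the bookkeeping for points of $\eta$ that touch $\pa K$ (where $\mathfrak{G}$ is open but the $\setminus \pa K$ exclusions in Theorem~\ref{lefttrapinlefttrap} must be tracked), which I would handle either by the stability Theorem~\ref{stability j-fences} or by observing such contact points only occur at $\eta$'s start, outside the inductive recursion.
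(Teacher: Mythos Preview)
Your inductive scaffolding is not wrong, but it obscures where the actual content lies and leaves the key step unproven. The paper does not induct: it argues by contradiction on the \emph{whole} polygonal at once. Assuming $\eta\not\subset\mathfrak{G}(K,w)$, take the last (with respect to $\preceq$) point $z\in\partial\mathfrak{G}(K,w)$ so that $\eta\setminus\eta_z\subset\mathfrak{G}(K,w)$; then $z$ lies on one of the bounding involutes, say $i_l$, and sits on some segment $zw_i$ of $\eta$ with $w_i\in\mathfrak{G}(K,w)$, $z\prec w_i$. The decisive geometric fact is that the tangent direction $\mathbf{t}_z$ to $i_l$ at $z$ is orthogonal to $z-z_l$ for a point $z_l\in\partial K$ (this is precisely \eqref{di(theta)}/\eqref{remiv}), and the region $\mathfrak{G}_l(K,w)$ near $z$ sits in the right angle at $z$ with sides $\mathbf{t}_z$ and $z_l-z$. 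Hence $\langle w_i-z,\,z-z_l\rangle<0$, and evaluating the distancing inequality \eqref{defselexpwithderfromK} at $z_\varepsilon=z+\varepsilon(w_i-z)$ with $y=z_l\in K$ gives an immediate contradiction. This single local computation handles every segment of the polygonal simultaneously, which is why no induction is needed.

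Your base case is exactly this computation, but you have not carried it out: the remark that the involutes are themselves $SDC_K$ (Corollary~\ref{invasseck}) does not by itself force a backward-going $SDC_K$ segment to stay on the correct side of them. What is needed is the orthogonality ``involute tangent $\perp$ tangent-to-$K$ segment through a point of $\partial K$'', and then the distancing inequality applied with that specific point of $\partial K$. Once you write this down, the induction dissolves --- the same argument applies at whatever point $\eta$ first meets $\partial\mathfrak{G}(K,w)$, regardless of which segment that is.

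For \eqref{eta_wincltrapG} the paper's route is also different and cleaner than yours: rather than chaining nested-fence inclusions (where you would need Theorem~\ref{lefttrapinlefttrap}(c) for $y_{m-1}\in\overline{\mathfrak{G}(K,y)}$, but that theorem requires $u$ in the \emph{open} fence), the paper simply takes $w_n\to y$ with $y\in\mathfrak{G}(K,w_n)$, applies \eqref{eta_wintrapG} for each $n$, and passes to the limit via Theorem~\ref{stability j-fences}.
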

\begin{proof} To prove \eqref{eta_wintrapG}, let us 
 assume, by contradiction, that $\eta$ has a point $z\not \in \mathfrak{G}(K,w)$. With no loss of generality 
 it can
 be assumed that $z \in \pa \mathfrak{G}(K,w)$ and 
 $$\eta\setminus \eta_z \subset \mathfrak{G}(K,w).$$ 
 Then, $z$ is 
  the end point of a segment $zw_i$, where 
 $w_i\in \mathfrak{G}(K,w)\cap \eta$  and $z\prec w_i$ on $\eta$.
 As $z\in \pa \mathfrak{G}(K,w)$, then there exists an involute 
 through $z$ which is a piece of the boundary of $\mathfrak{G}(K,w)$ 
 (to fix the ideas it is assumes that it is the left involute
 $i_l$).
 Let us consider $z_l\in \pa K$ so that the tangent vector ${\bf t}_z$ to $i_l$ at $z$ satisfies
 $$\langle {\bf t}_z, z-z_l\rangle =0.$$
 As $w_i$ is inside the orthogonal  angle centered in $z$ with sides ${\bf t}_z$ and $z_l-z$, then
 $$\langle w_i-z, z-z_l\rangle < 0.$$
 Then as for $\varepsilon > 0 $ sufficiently small,
 $z_\varepsilon:=z+\varepsilon (w_i -z) \in \eta_{w_i}$ and at $z_\varepsilon$  the curve $\eta$ has tangent vector 
 $w_i-z$ that satisfies
 $$\langle w_i-z, z_\varepsilon-z_l\rangle < 0,$$
 contradicting the fact that $\eta_w$ has the distancing from $K$ property \eqref{defselexpwithderfromK}. 
 This proves \eqref{eta_wintrapG}. 
 
 If $w_n \to y$, with $y\in \mathfrak{G}(K,w_n)$, also the inclusions
 $$\eta \subset cl(\mathfrak{G}(K,w_n))$$
 hold. Then \eqref{eta_wincltrapG} is obtained by the approximation Theorem \ref{stability j-fences}. 
\end{proof}

\begin{theorem}\label{*}
 Let $K$ be  a  convex body and let $\ga^K$ be $SDC_K$,  $w\in \ga, w\not \in K$. Then
 \begin{equation}\label{incJKz}
  \ga^K_w \subset cl(\mathfrak{G}(K,w)). 
 \end{equation}
\end{theorem}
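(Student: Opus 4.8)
The goal is to show that for a $SDC_K$ curve $\ga^K$ and a point $w\in\ga^K\setminus K$, the initial arc $\ga^K_w$ lies in $cl(\mathfrak{G}(K,w))$. The natural strategy is to reduce the continuous statement to the polygonal case already handled in Lemma \ref{*polygonal}, and then pass to the limit using the stability of the $\mathfrak{G}$-fences (Theorem \ref{stability j-fences}). First I would recall that, by Remark \ref{remga^K}, $\ga^K$ has an absolutely continuous parameterization $t\to x(t)$ satisfying the differential form \eqref{defselexpwithderfromK} of the distancing-from-$K$ property, and that \eqref{ineqdefEC+} holds for all pairs of ordered points. The aim is to inscribe in $\ga^K_w$ a sequence of polygonal curves that are deleted $SDC_K$ (i.e. satisfy the distancing-from-$K$ property) with endpoints tending to $w$, apply Lemma \ref{*polygonal} to each, and take the limit.

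The construction of the inscribed polygonals is the crux. Given a finite ordered sample of points $y_0\preceq y_1\preceq\cdots\preceq y_N=w$ on $\ga^K_w$, I would form the polygonal line $\eta$ with these vertices. For $\eta$ to be a deleted $SDC_K$ I need each edge direction $y_{i+1}-y_i$ to satisfy $\langle y_{i+1}-y_i,\,\zeta-y_i\rangle\le 0$ for all $\zeta$ in the relevant convex set, and more precisely I need $\eta$ to have the distancing-from-$K$ property at each point. By Proposition \ref{lemmaecissec} it suffices to check the inequality against the vertices of $K$ together with the earlier sample points; and the key quantitative fact is that along $\ga^K$ one has $\langle x(t_2)-x(t_1),\,y-x(t_1)\rangle\le 0$ for $t_1\le t_2$ and $y\in K$ (this is the integrated form of \eqref{defselexpwithderfromK}, using that $K$ is fixed), hence the chord direction from an earlier sample point to a later one already makes a non-acute angle with the vector from the earlier point toward any point of $K$. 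Since $\Om_{t_1}:=co(\ga_{x(t_1)}\cup K)$ is a quasi convex family (see the discussion after Definition \ref{defstratifications}), the chord also distances from the convex hull of the earlier sample together with $K$; this is exactly the statement that the polygonal through consecutive samples is a deleted $SDC_K$. So $\eta$ is a polygonal deleted $SDC_K$ with endpoint $w\in\mathfrak{G}(K,w)$ (note $w\in\mathfrak{G}(K,w)$ holds since $w$ lies on the bounding involutes — more carefully one should observe $w\in cl(\mathfrak{G}(K,w))$ and perturb, replacing $w$ by a slightly later point if $\ga^K$ extends, or arguing directly; in the pure $SDC_K$ case $w$ is the endpoint and $w\in\partial\mathfrak{G}(K,w)\subset cl(\mathfrak{G}(K,w))$, which is what \eqref{eta_wincltrapG} needs anyway).

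Applying Lemma \ref{*polygonal}, specifically the inclusion \eqref{eta_wincltrapG}, gives $\eta\subset cl(\mathfrak{G}(K,w))$. Now I would take a sequence of ever-finer samples, so that the corresponding inscribed polygonals $\eta^{(n)}$ converge (uniformly, as curves) to $\ga^K_w$: this is standard for rectifiable curves with an absolutely continuous parameterization. Since each $\eta^{(n)}\subset cl(\mathfrak{G}(K,w))$ and $cl(\mathfrak{G}(K,w))$ is closed, the limit set $\ga^K_w$ is contained in $cl(\mathfrak{G}(K,w))$, which is \eqref{incJKz}.

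**Main obstacle.** The delicate point is verifying that the inscribed polygonals are genuinely deleted $SDC_K$ curves — i.e. that they satisfy the distancing-from-$K$ property \emph{at every point of every edge}, not merely at the vertices. The edge direction is constant, so the condition at a point $z_\varepsilon=y_i+\varepsilon(y_{i+1}-y_i)$ reads $\langle y_{i+1}-y_i,\,\zeta-z_\varepsilon\rangle\le 0$ for $\zeta\in K$ (and for earlier vertices); since $z_\varepsilon-y_i$ is a positive multiple of $y_{i+1}-y_i$, the inequality at $z_\varepsilon$ is implied by the inequality at $y_i$ (the extra term $-\varepsilon|y_{i+1}-y_i|^2$ only helps). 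So the whole matter reduces to the vertex inequalities, and those follow from the integrated monotonicity along $\ga^K$ plus Proposition \ref{lemmaecissec}. The remaining care is the convergence of inscribed polygonals to the curve, which is routine for rectifiable curves, and the observation that $cl(\mathfrak{G}(K,w))$ is indeed closed so that the set inclusion survives the limit; Theorem \ref{stability j-fences} is available if one instead wants to let the fence vary, but here the fence $\mathfrak{G}(K,w)$ is fixed, so only elementary closedness is needed.
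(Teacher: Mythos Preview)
Your overall strategy --- reduce to the polygonal case handled by Lemma~\ref{*polygonal} and pass to the limit --- is exactly the paper's strategy. The gap is in your construction of the approximating polygonals.

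You claim that the polygonal inscribed in $\ga^K_w$ with vertices $y_0\preceq y_1\preceq\cdots\preceq y_N=w$ is itself a deleted $SDC_K$, and you justify this with the ``integrated form''
\[
\langle x(t_2)-x(t_1),\,y-x(t_1)\rangle\le 0\quad\text{for }t_1\le t_2,\ y\in K.
\]
This inequality does \emph{not} follow from \eqref{defselexpwithderfromK} or \eqref{ineqdefEC+}. The monotonicity $|x(t_1)-y|\le|x(t_2)-y|$ only gives
\[
\langle x(t_2)-x(t_1),\,y-x(t_1)\rangle\le\tfrac12|x(t_2)-x(t_1)|^2,
\]
which can be strictly positive. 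Concretely: let $K$ contain the origin and let a portion of $\ga^K$ be an arc of the unit circle, say from $y_i=(1,0)$ to $y_{i+1}=(0,1)$ (such arcs do occur in $SDC_K$ curves). Then $\langle y_{i+1}-y_i,\,0-y_i\rangle=\langle(-1,1),(-1,0)\rangle=1>0$, and the midpoint of the chord is at distance $1/\sqrt{2}<1$ from the origin. So the chord is not distancing from $K$, and the inscribed polygonal is not a deleted $SDC_K$. Your reduction to vertex inequalities in the ``Main obstacle'' paragraph is correct, but the vertex inequalities themselves fail.

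The paper closes this gap by invoking \cite[Theorem 6.16]{MLV}, which supplies genuine $SDC_K$ polygonal approximants to $\ga^K_{w_n}$ (constructed more carefully than mere inscription). It also takes $w_n\prec w$, $w_n\to w$, applies Lemma~\ref{*polygonal} to get $\ga^K_{w_n}\subset cl(\mathfrak{G}(K,w_n))$, and then uses the stability Theorem~\ref{stability j-fences} to let the fence vary with $n$. So the stability result is not optional here; it is what lets one pass from $cl(\mathfrak{G}(K,w_n))$ to $cl(\mathfrak{G}(K,w))$.
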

\begin{proof} Let us choose a sequence $\{w_n\}, w_n\in \ga^K, w_n \preceq w, w_n \to w $.
Let us fix the arc  $\ga^K_{w_n}$. By \cite[Theorem 6.16]{MLV},  $\ga^K_{w_n}$ is limit of  $SDC_K$ polygonals with
end point $w_n$.
From Lemma \ref{*polygonal}, these polygonals are enclosed in $cl(\mathfrak{G}(K,w_n))$; then 
$$\ga^K_{w_n} \subset cl(\mathfrak{G}(K,w_n))$$
holds too.
The inclusion \eqref{incJKz} is now
obtained from the limit of the previous inclusions and by the approximation Theorem \ref{stability j-fences}.
\end{proof}
 \begin{theorem}\label{bounding regionsJ(K.x_0)} Let $K$ be a convex body not reduced to a point. 
 If $\ga^K$ is a self-distancing curve from $K$ with 
starting point $x_0\in \pa K$, then
\begin{equation}\label{gaouterJ} 
   \ga^K \subset cl(\RR^2\setminus (\mathfrak{J}(K,x_0)\cup K)).
\end{equation}
\end{theorem}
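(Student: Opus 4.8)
The plan is to reduce the claim to what has already been proved about $\mathfrak{G}$-fences. Recall that $\mathfrak{J}(K,x_0)$ is bounded by the two involutes $i_l:=i_{l,x_0}$ and $i_r:=i_{r,x_0}$ issued from $x_0$, together with arcs of $\pa K$, up to their first crossing point $\overline{y}$. By Corollary \ref{invasseck}, the left involute $i_l$ on $[\vartheta_0^+,\vartheta_l^*]$ and the right involute $i_r$ on $[\vartheta_r^*,\vartheta_0^-]$ are themselves $SDC_K$'s starting at $x_0$; and by Theorem \ref{thetaintersection} their first crossing $\overline{y}$ occurs while both are still in their $SDC_K$ range. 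So the boundary arcs $i_l^{x_0,\overline{y}}$ and $i_r^{x_0,\overline{y}}$ of $\mathfrak{J}(K,x_0)$ are extremal $SDC_K$'s, and $\ga^K$ is "squeezed" between them.

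\textbf{Main step.}
First I would show that a $SDC_K$ $\ga^K$ starting at $x_0\in\pa K$ cannot enter $\inte\,\mathfrak{J}_l(K,x_0)$ nor $\inte\,\mathfrak{J}_r(K,x_0)$. Suppose for contradiction that $\ga^K$ has a point in $\inte\,\mathfrak{J}_l(K,x_0)$. Since $\ga^K$ starts on $\pa K$ and $\mathfrak{J}_l$ is the convex region bounded by the segment $x_0\overline{y}$, the arc $i_l^{x_0,\overline y}$, and $arc^+$ of $\pa K$, the curve must cross the boundary of $\mathfrak{J}_l$. It cannot cross $arc^+(\cdot)\subset\pa K$ (it stays in $\R^2\setminus\relint K$) except tangentially at the start, and it cannot cross the segment $x_0\overline y$ twice while remaining self-distancing from $K$ (the same orthogonal-angle argument as in Lemma \ref{*polygonal}, applied to the relevant support direction, forbids re-entering). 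So it must cross the involute arc $i_l^{x_0,\overline y}$. But here is the key: at the crossing point $z\in i_l^{x_0,\overline y}$, the involute has tangent direction ${\bf n}_\vartheta$ and the outward normal to $\mathfrak{J}_l$ at $z$ points "away" in the direction of $-{\bf t}_\vartheta$, i.e. toward $x_l(s_{l+}(\vartheta))\in\pa K$. Thus a curve leaving $\mathfrak{J}_l$ through $z$ must have, near $z$, a velocity $v$ with $\langle v, x_l(s_{l+}(\vartheta)) - z\rangle > 0$ — which directly violates the distancing-from-$K$ property \eqref{defselexpwithderfromK} with $y = x_l(s_{l+}(\vartheta))\in K$. (Equivalently: think of $z$ as a point of the involute $i_{l,\xi}$ for $\xi=\xi_l(z)$ on $\pa K$; by Theorem \ref{increasing sec} iv) the distance from $\xi$ along the involute is nondecreasing, and any escape route from $\mathfrak{J}_l$ at $z$ decreases distance to $\xi$.) This contradiction, together with the symmetric argument for $\mathfrak{J}_r$, shows $\ga^K$ never meets the interior of $\mathfrak{J}_l\cup\mathfrak{J}_r$.

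\textbf{Conclusion.}
Since $\mathfrak{J}(K,x_0) = (\mathfrak{J}_l(K,x_0)\cup\mathfrak{J}_r(K,x_0))\setminus\inte K$ and $\ga^K\subset\R^2\setminus\relint K$ already avoids $\inte K$, avoiding $\inte(\mathfrak{J}_l\cup\mathfrak{J}_r)$ gives $\ga^K\subset\R^2\setminus(\inte\,\mathfrak{J}(K,x_0)\cup\inte K)$. Points of $\ga^K$ on $\pa\mathfrak{J}(K,x_0)$ or on $\pa K$ (e.g. $x_0$ itself) are permitted, so passing to the closure yields exactly \eqref{gaouterJ}. The one point requiring care — and the main obstacle — is making the "escape forces a distance-decreasing velocity" step rigorous at \emph{every} boundary point of $\mathfrak{J}_l$, including the corner $\overline{y}$ and any vertices of $\pa K$ where the involute is only one-sidedly differentiable; there one uses Lemma \ref{regi(theta)} and Theorem \ref{regi(sigma)}, which guarantee well-defined left/right tangent directions equal to ${\bf n}_\vartheta$, so the orthogonality argument still applies with one-sided derivatives, exactly as in the proof of Lemma \ref{*polygonal}. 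Alternatively, one can bypass the boundary subtleties entirely by a limiting argument: approximate $\ga^K$ by $SDC_K$ polygonals (as in Theorem \ref{*}), apply the above to polygonals where the crossing analysis is elementary, and pass to the limit using the stability Theorem \ref{stability j-fences}.
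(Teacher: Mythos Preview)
Your approach is genuinely different from the paper's, and the central geometric idea is sound, but the write-up contains real errors that obscure it.

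\textbf{What the paper does.} The paper does not argue about crossings of $\pa\mathfrak{J}_l$ directly. It first observes that $Int(\mathfrak{J}(K,x_0))=\mathfrak{G}(K,\overline y)$, then argues by contradiction: if $w\in\ga^K\cap\mathfrak{G}(K,\overline y)$, Theorem~\ref{*} gives $\ga^K_w\subset cl(\mathfrak{G}(K,w))$; Theorem~\ref{lefttrapinlefttrap} (the strict nesting $cl(\mathfrak{G}(K,w))\setminus\pa K\subset\mathfrak{G}(K,\overline y)$) then forces $\ga^K_w\setminus\{x_0\}$ to stay at positive distance from $\pa\mathfrak{J}(K,x_0)$, contradicting $x_0\in\pa\mathfrak{J}(K,x_0)$. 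So the paper spends the machinery of Theorems~\ref{lefttrapinlefttrap} and~\ref{*}; your route would bypass both.

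\textbf{What is right in your argument.} At $z=i_l(\vartheta)$ the tangent to $i_l$ is ${\bf n}_\vartheta$ and the convex hull $\mathfrak{J}_l$ lies on the ${\bf t}_\vartheta$ side of that tangent line (the concave side of the involute), which is exactly the side of $x_l(s_{l+}(\vartheta))\in\pa K$. Hence a $SDC_K$ that \emph{enters} $Int(\mathfrak{J}_l\cup\mathfrak{J}_r)$ through $z$ has velocity with $\langle \dot x, x_l(s_{l+}(\vartheta))-z\rangle>0$, contradicting \eqref{defselexpwithderfromK}. This is precisely the mechanism of Lemma~\ref{*polygonal}, applied with $w=\overline y$.

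\textbf{Where your write-up goes wrong.} First, the boundary of $\mathfrak{J}_l$ is \emph{not} the involute arc together with an arc of $\pa K$: by Definition~\ref{jfence} it is the involute arc $i_l^{x_0,\overline y}$ together with the chord $x_0\overline y$, nothing else. Second, you write ``leaving $\mathfrak{J}_l$'' when the inequality you state is the one for \emph{entering}; as written, the sentence is false. Third, the chord $x_0\overline y$ is a genuine part of $\pa\mathfrak{J}_l$, and your remark about ``cannot cross twice'' does not dispose of it; the clean fix is to argue about $Int(\mathfrak{J}_l\cup\mathfrak{J}_r)$, whose boundary is only the two involute arcs, so only the involute-crossing case arises. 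Finally, one must check separately that $\ga^K$ does not enter the interior immediately from $x_0$ (where both involutes meet); this follows because any initial tangent of a $SDC_K$ at $x_0$ lies in $N_K(x_0)$, hence in the closed normal sector $N$ between ${\bf n}_{\vartheta_0^+}$ and ${\bf n}_{\vartheta_0^-}$, which is locally complementary to $Int(\mathfrak{J}_l\cup\mathfrak{J}_r)$. With these repairs your direct argument goes through and is arguably more economical than the paper's, since it uses only the Lemma~\ref{*polygonal} mechanism and avoids Theorem~\ref{lefttrapinlefttrap}.
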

 \begin{proof}Let $z$ be the first crossing point of the left and right involutes of $K$ starting at $x_0$. Then
$$Int(\mathfrak{J}(K,x_0))=\mathfrak{G}(K,z).$$
 By contradiction, if $\ga^K$ has a point $w\in \mathfrak{G}(K,z)$, then, by Theorem \ref{*},  the following inclusion holds
$$\ga^K_w \subset cl(\mathfrak{G}(K,w));$$
since, by the distancing from $K$ property, $\ga^K$ has in common with $K$  only the starting point $x_0$ then, 
the following inclusion 
$$\ga^K_w \setminus\{x_0\}\subset cl(\mathfrak{G}(K,w))\setminus \pa K$$
holds too.
Moreover by \eqref{cltrapinclusion} the set $cl(\mathfrak{G}(K,w))\setminus \pa K$ has positive distance from the $\RR^2\setminus 
\mathfrak{G}(K,z)$; then $\ga^K_w \setminus\{x_0\}$ has a positive distance from 
$\RR^2\setminus\mathfrak{G}(K,z)=\RR^2\setminus Int(\mathfrak{J}(K,x_0))$.
This is in contradiction with
$x_0\in \pa \mathfrak{J}(K,x_0)$.\end{proof}

\begin{cor}\label{corjcogax_0}
 Let $\ga$ be a $SDC$ and $z_1\in \ga$ then
 $$\ga\setminus \ga_ {z_1} \subset cl(\RR^2\setminus \mathfrak{J}(co(\ga_{z_1}), z_1)).$$
\end{cor}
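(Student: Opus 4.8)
The corollary is a direct application of Theorem~\ref{bounding regionsJ(K.x_0)} to the convex body $K:=\co(\ga_{z_1})$. First I would check that the hypotheses of the theorem are met. The tail $\eta:=(\ga\setminus\ga_{z_1})\cup\{z_1\}$ is, by Remark~\ref{sdcofprevious} (equivalently Remark~\ref{remga^K}), a self-distancing curve from the convex hull of $\ga_{z_1}$, i.e.\ a $SDC_K$ with $K=\co(\ga_{z_1})$; here one uses that $z_1\in\ga$ so $\ga_{z_1}$ is a genuine earlier piece and $\co(\ga_{z_1})$ is a convex body (not a single point, provided $\ga_{z_1}$ is not a single point — the degenerate case where $z_1$ is the initial point of $\ga$ is trivial since then $\ga\setminus\ga_{z_1}=\ga$ and the $\mathfrak{J}$-fence of a point is empty, so the inclusion is vacuous). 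The starting point of this $SDC_K$ is $z_1$, and $z_1\in\pa K=\pa\co(\ga_{z_1})$ because $z_1$ is the last point of $\ga_{z_1}$ in the chosen ordering, hence an extreme-type boundary point of its convex hull.

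Granting this, Theorem~\ref{bounding regionsJ(K.x_0)} applied with this $K$ and starting point $x_0=z_1$ yields immediately
\begin{equation*}
\eta\subset cl\bigl(\RR^2\setminus(\mathfrak{J}(\co(\ga_{z_1}),z_1)\cup\co(\ga_{z_1}))\bigr)\subset cl\bigl(\RR^2\setminus\mathfrak{J}(\co(\ga_{z_1}),z_1)\bigr),
\end{equation*}
and since $\ga\setminus\ga_{z_1}\subset\eta$ the claimed inclusion follows. So the proof is essentially two lines once the reduction is in place.

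**The main point to be careful about.** The only real content is the verification that $z_1\in\pa\co(\ga_{z_1})$ and that $\eta$ satisfies Definition~\ref{defSEC+} with $K=\co(\ga_{z_1})$ — in particular the distancing-from-$K$ property \eqref{ineqdefEC+}. This is exactly the assertion already recorded in Remark~\ref{sdcofprevious}, whose justification is Proposition~\ref{lemmaecissec}: for $x,x_1\in\eta$ with $x\preceq x_1$ and any $y\in\ga_{z_1}$, the self-distancing inequality \eqref{IBDCmonlinearly} along $\ga$ gives $|x-y|\le|x_1-y|$ (taking $y\preceq z_1\preceq x\preceq x_1$), and then Proposition~\ref{lemmaecissec} propagates this to all $y\in\co(\ga_{z_1})$. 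Thus no new estimate is needed; the corollary is obtained by specializing an already-established theorem, and I would present it in precisely that compressed form.

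**Remark on edge cases.** One should state explicitly that if $z_1$ is the starting point of $\ga$ (so $\ga_{z_1}=\{z_1\}$), the statement is interpreted with $\mathfrak{J}(\{z_1\},z_1)=\emptyset$ and holds trivially; otherwise $\co(\ga_{z_1})$ is a convex body not reduced to a point and the argument above applies verbatim.
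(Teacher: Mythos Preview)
Your proposal is correct and follows essentially the same route as the paper: both reduce to Theorem~\ref{bounding regionsJ(K.x_0)} by taking $K=\co(\ga_{z_1})$, noting that the tail of $\ga$ beyond $z_1$ is a $SDC_K$ and that $z_1\in\pa K$ (the paper cites \cite[(i) of Lemma 4.6]{MLV} for the latter fact). Your version is slightly more detailed in spelling out the justification via Proposition~\ref{lemmaecissec} and in handling the degenerate case where $\ga_{z_1}$ is a single point, but the argument is the same.
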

\begin{proof} Since $\ga\setminus \ga_{z_1}$ is a self-distancing curve from $co(\ga_{z_1})$ and $z_1\in \pa co(\ga_{z_1})$ (see 
\cite[(i) of Lemma 4.6]{MLV}, then
 Theorem \ref{bounding regionsJ(K.x_0)} applies to $\ga^K=\ga\setminus \ga_ {z_ 1}$ with $K=co(\ga_{z_1})$.
\end{proof}

 \begin{definition} Let $\ga$ be a SCD.
 If  $z_1, z \in \ga $, with $z_1 \preceq z$ let
 $$\ga_{z_1,z}:=\ga_{z}\setminus \ga_{z_1}.$$
 \end{definition}
 For $z \not \in K$, let $K^z$  be the cap body, introduced in  \eqref{capbody}.
 Next theorem shows the principal result  on bounding regions  for arcs of a SDC $\ga$. 
\begin{theorem}\label{ga-ga_zincludedinJ(capbody)}
  Let $K$ be  a  convex body and let $\ga$ be  a $SDC_K$. If $z_1, z \in \ga $, with $z_1 \preceq z$
then \begin{equation}\label{princboundingregions}
      \ga_{z_1,z}\subset cl(\mathfrak{G}(K,z)\setminus \mathfrak{J}(K^{z_1},z_1)). 
     \end{equation}
 \end{theorem}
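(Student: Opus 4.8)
The plan is to prove the two-sided inclusion \eqref{princboundingregions} by combining the outer bound provided by Theorem \ref{*} with the inner exclusion provided by Theorem \ref{bounding regionsJ(K.x_0)}, applied to a suitable cap body. First I would establish the containment $\ga_{z_1,z}\subset cl(\mathfrak{G}(K,z))$. Since $\ga$ is a $SDC_K$ and $z\in\ga$, $z\not\in K$, the arc $\ga_z = \ga^K_z$ lies in $cl(\mathfrak{G}(K,z))$ by Theorem \ref{*}; as $\ga_{z_1,z}\subset\ga_z$, the first half of the inclusion follows immediately.

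The second half is to show $\ga_{z_1,z}\cap \mathfrak{J}(K^{z_1},z_1)=\emptyset$ (more precisely, that $\ga_{z_1,z}$ avoids $Int(\mathfrak{J}(K^{z_1},z_1))\cup K^{z_1}$, which together with the first half gives the stated set difference). The key observation is that the sub-arc $\eta:=(\ga\setminus\ga_{z_1})\cup\{z_1\}$ is, by Remark \ref{sdcofprevious} / Remark \ref{remga^K}, a self-distancing curve from $co(\ga_{z_1}\cup K)$, and in particular it has the distancing property from the smaller convex body $K^{z_1}=co(K\cup\{z_1\})$ — note $K\subset co(\ga_{z_1}\cup K)$ and $z_1\in\ga_{z_1}$, so $K^{z_1}\subset co(\ga_{z_1}\cup K)$, hence by Proposition \ref{lemmaecissec} the distancing inequality \eqref{defselexpwithderfromK} holds for all $y\in K^{z_1}$. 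Also $z_1\in\pa K^{z_1}$. Therefore $\eta$ is a self-distancing curve from $K^{z_1}$ with starting point $z_1\in\pa K^{z_1}$, so Theorem \ref{bounding regionsJ(K.x_0)} applies with $K$ replaced by $K^{z_1}$ and $x_0$ by $z_1$, giving
$$\eta\subset cl\bigl(\RR^2\setminus(\mathfrak{J}(K^{z_1},z_1)\cup K^{z_1})\bigr).$$
Intersecting with the first inclusion and discarding the point $z_1$ (which lies on $\pa K^{z_1}$, hence on the boundary of the excluded region) yields \eqref{princboundingregions}; I would be slightly careful that $\ga_{z_1,z}=\ga_z\setminus\ga_{z_1}$ does not contain $z_1$, so no boundary point needs to be added back, and that the closures combine correctly so the right-hand side is exactly $cl(\mathfrak{G}(K,z)\setminus\mathfrak{J}(K^{z_1},z_1))$.

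The main obstacle I anticipate is the bookkeeping around closures and the precise meaning of the set difference on the right-hand side: Theorem \ref{*} gives a closed outer region $cl(\mathfrak{G}(K,z))$ while Theorem \ref{bounding regionsJ(K.x_0)} excludes the closed set $\mathfrak{J}(K^{z_1},z_1)$ (it is stated as $cl(\RR^2\setminus(\mathfrak{J}\cup K))$), so the intersection is $cl(\mathfrak{G}(K,z))\cap cl(\RR^2\setminus(\mathfrak{J}(K^{z_1},z_1)\cup K^{z_1}))$, and one must check this equals $cl(\mathfrak{G}(K,z)\setminus\mathfrak{J}(K^{z_1},z_1))$ up to the already-handled boundary identifications — this is a routine but slightly delicate point-set argument, using that $\mathfrak{G}(K,z)$ is open, that $z_1\in\ga\subset cl(\mathfrak{G}(K,z))$, and that $K\subset K^{z_1}$ so the ``$\setminus K^{z_1}$'' term is subsumed. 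A secondary point to verify cleanly is that $K^{z_1}$ is indeed a convex body not reduced to a point (immediate since $K$ is one) so that the cited theorems are applicable without degeneracy.
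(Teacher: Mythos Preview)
Your proposal is correct and follows essentially the same approach as the paper: apply Theorem \ref{*} for the containment in $cl(\mathfrak{G}(K,z))$ and Theorem \ref{bounding regionsJ(K.x_0)} with $K^{z_1}$ in place of $K$ for the exclusion from $\mathfrak{J}(K^{z_1},z_1)$. The paper's proof is slightly more direct --- it observes that $\ga_{z_1,z}$ has the distancing property from $K$ and from $\{z_1\}$ separately, then invokes Proposition \ref{lemmaecissec} to get distancing from $K^{z_1}=co(K\cup\{z_1\})$ --- rather than passing through Remark \ref{sdcofprevious} and the larger set $co(\ga_{z_1}\cup K)$, but the logic is the same and your closure bookkeeping, which the paper omits, is a welcome addition.
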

\begin{proof} First let us notice that $\ga_{z_1,z}$ has the distancing from $K$ and from the set point $\{z_1\}$ property,
thus by Proposition \ref{lemmaecissec} it has the distancing from $K^{z_1}$ property. Then  the inclusion \eqref{princboundingregions}
follows from 
 Theorems \ref{*} and \ref{bounding regionsJ(K.x_0)}.
\end{proof}

%\begin{theorem}\label{bounding regionsJ(K.x_0)ii} Let $K$ be a convex body not reduced to a point. Let $x_0\in \pa K$. 
%If  $\eta$ is a self-distancing curve from $K$ starting from $y_0\in \pa K, y_0\neq x_0$ then
%either  is completely contained in 
 %$\mathfrak{J}(K,x_0)$ or crosses
  %$\pa \mathfrak{J}(K,x_0)$ at a connected set $\eta_{y_1,y_2}$ (possibly reduced to a single point) such that   
 % \begin{equation}\label{x in J(K,x_0)}
  %\eta \ni y \prec y_1 \Rightarrow y \in Int(\mathfrak{J}(K,x_0)); \quad
  % y_2 \prec y\in \eta  \Rightarrow y \in \RR^2\setminus \mathfrak{J}(K,x_0).
  %\end{equation}
 % \end{theorem}
%\begin{proof} 
 %Let $\eta$ be a $SDC_K$ starting at $y_0\neq x_0$ and not completely contained in $\mathfrak{J}(K,x_0)$.
%If $y\in \eta$ , 
%$y\in Int(\mathfrak{J}(K,x_0))$, then, arguing as in the proof of Theorem \ref{bounding regionsJ(K.x_0)},
%it is proved that 
%$$\eta_y \subset cl(\mathfrak{G}(K,y))\subset Int(\mathfrak{J}(K,x_0)).$$
% Let now $y\in \eta$, $y\not\in \mathfrak{J}(K,x_0)$ similarly  
%$\eta \setminus \eta_y$ does not have points in $ \mathfrak{J}(K,x_0)$.
%Then $\eta \cap\pa\mathfrak{J}(K,x_0)$
% is a closed, connected, non empty set and \eqref{x in J(K,x_0)} is proved.  \end{proof}

Let us conclude the section with the following inclusion result for $\mathfrak{J}$-fences.

 \begin{theorem}\label{incj}
 Let $K,H$ be two convex bodies not reduced to a point, $K \subset H $. Let $x_0\in \pa K \cap \pa H$. Then
 $$\mathfrak{J}(K,x_0)\subset \mathfrak{J}(H,x_0).$$
\end{theorem}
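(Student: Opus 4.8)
The plan is to reduce the inclusion of $\mathfrak{J}$-fences to a pointwise comparison of the bounding involutes, using the support-function formulas from \S 3. First I would fix a common orientation: since $x_0\in\pa K\cap\pa H$ and $K\subset H$, every outer normal direction $\theta_0\in G_K(x_0)$ is also an outer normal to $H$ at $x_0$, so I may choose the same initial parameters $x_0,s_0,\theta_0,\vartheta_0$ for both bodies. Write $h_K,h_H$ for the two support functions; from $K\subset H$ we get $h_K\le h_H$ on all of $\RR$, and from $x_0\in\pa K\cap\pa H$ with common supporting line we get $h_K(\vartheta_0)=h_H(\vartheta_0)$ together with agreement of the one-sided derivatives at $\vartheta_0$ in the chosen normal direction (this is where $x_0$ being a common boundary point with a common normal is used). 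The key computation is then to feed these facts into the explicit involute formula \eqref{parinvtheta+}: the left involute of $\pa K$ at $x_0$ and that of $\pa H$ at $x_0$ have the same ${\bf n}_\vartheta$-component difference $h_H(\vartheta)-h_K(\vartheta)\ge 0$ and a ${\bf t}_\vartheta$-component whose difference is $-\bigl(\int_{\vartheta_0}^{\vartheta}(h_H-h_K)(\tau)\,d\tau\bigr)$ plus matching boundary terms that cancel.

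Next I would argue geometrically that these sign relations force $\mathfrak{J}_l(K,x_0)\subset\mathfrak{J}_l(H,x_0)$ and likewise for the right pieces. The cleanest route is probably to show that for each admissible $\vartheta$ the full segment from $x_0$ to $i^K_{l,x_0}(\vartheta)$ lies inside $\mathfrak{J}_l(H,x_0)$: the endpoint $i^K_{l,x_0}(\vartheta)$ lies on or "behind" (toward $K$, hence toward $H$) the corresponding left involute ray of $H$, because the ${\bf n}_\vartheta$-shift only moves it further into $H$ while the ${\bf t}_\vartheta$-shift is controlled by the nonnegative integral above; and since $\mathfrak{J}_l(H,x_0)$ is a convex body (as noted right after Definition \ref{jfence}) containing $x_0$ and that involute arc, it contains every such segment. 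One also has to check that the relevant $\vartheta$-range for $K$, namely $[\vartheta_0^+,\tilde\vartheta_l^K]$, is contained in or compatible with that for $H$; this follows because $\tilde\vartheta_l$ is determined (Theorem \ref{thetaintersection}) as a crossing value of the left and right involutes, and monotonicity of the data $h_K\le h_H$ propagates to the functions $s_{l+}$, hence to where the involutes of $K$ versus $H$ first meet — larger body means the involutes "spiral out" faster and meet sooner in the appropriate sense, so the $K$-arc used to build $\mathfrak{J}_l(K,x_0)$ stays within the region already enclosed by the $H$-involutes.

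A complementary, and perhaps more robust, way to organize the same idea is via the stability/approximation machinery of Theorem \ref{stability j-fences} together with Theorem \ref{lefttrapinlefttrap}: approximate $H$ as a limit of cap bodies or of finite intersections built over $K$, or directly invoke that any point $z\in\pa K$ reached by a left involute of $K$ produces nested $\mathfrak{G}_l$-regions; but I expect the direct support-function argument to be shortest. Removing $\inte(K)\subset\inte(H)$ (which is how $\mathfrak{J}$ is defined in both cases) only helps the inclusion, so it requires no extra care beyond noting $\mathbb{R}^2\setminus\inte(H)\subset\mathbb{R}^2\setminus\inte(K)$ is the wrong direction — rather, one checks $\mathfrak{J}_l(K,x_0)\setminus\inte K\subset\mathfrak{J}_l(H,x_0)$ by the above, and then $\mathfrak{J}_l(K,x_0)\setminus\inte K\subset\mathfrak{J}_l(K,x_0)\setminus\inte H$ need not hold, so in fact one shows directly $\mathfrak{J}_l(K,x_0)\setminus\inte K\subset\mathfrak{J}_l(H,x_0)\setminus\inte H$ using that $\mathfrak{J}_l(K,x_0)\cap\inte H$ is contained in $K$ near $x_0$ by convexity.

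The main obstacle I anticipate is the range comparison — making precise that $\tilde\vartheta_l^K\le$ (or otherwise compatible with) the parameter window defining $\mathfrak{J}_l(H,x_0)$, and handling the possibility that the left involute of $K$ exits the region bounded by the involutes of $H$ before reaching $\overline y^K$. I would handle this by invoking Lemmas \ref{leftinvincludedinleft} and \ref{rightinvincludedinleft} applied with the ambient body $H$: once a point of $i^K_{l,x_0}$ lies in $\mathfrak{G}_l(H,\cdot)$ the whole preceding arc does, and the first crossing point $\overline y^K$ of the $K$-involutes cannot escape $\mathfrak{J}(H,x_0)$ without contradicting that the left and right $K$-involutes are themselves $SDC_K\subset SDC_H$ curves and hence obey the bounding region Theorem \ref{bounding regionsJ(K.x_0)} with $K$ replaced by $H$. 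That last observation — that the $\mathfrak{J}$-fence boundary involutes of the smaller body are themselves curves to which the bounding-region theorem for the larger body applies — is, I think, the conceptual crux that makes the whole argument close cleanly.
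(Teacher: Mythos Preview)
Your closing ``conceptual crux'' is almost exactly the paper's argument, but with the roles of $K$ and $H$ swapped in a way that breaks it. You assert that the $K$-involutes are ``$SDC_K\subset SDC_H$ curves,'' but the inclusion of classes goes the other way: when $K\subset H$, a curve with the distancing-from-$H$ property \eqref{ineqdefEC+} automatically distances from every $y$ in the subset $K$, so $SDC_H\subset SDC_K$, not conversely. (The same reversal appears in your opening paragraph: it is $N_H(x_0)\subset N_K(x_0)$, not the other inclusion.) Moreover, Theorem~\ref{bounding regionsJ(K.x_0)} places $SDC$ curves \emph{outside} the fence, so even if your inclusion held it would force the $K$-involutes out of $\mathfrak{J}(H,x_0)$ and deliver the opposite containment from the one you want.

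The paper's proof is the corrected three-line version of your observation: the boundary of $\mathfrak{J}(H,x_0)$ consists of arcs of the left and right involutes of $H$ starting at $x_0$; by Corollary~\ref{invasseck} these arcs are $SDC_H$, hence $SDC_K$ since $K\subset H$; by Theorem~\ref{bounding regionsJ(K.x_0)} they therefore lie in $cl\bigl(\RR^2\setminus(\mathfrak{J}(K,x_0)\cup K)\bigr)$ and cannot cross into $Int(\mathfrak{J}(K,x_0))$, which (both fences meeting at $x_0$) gives $\mathfrak{J}(K,x_0)\subset\mathfrak{J}(H,x_0)$. Your direct support-function route may well be completable, but the range comparison you yourself flag as the main obstacle, together with the fact that $\vartheta_0^+$ and the boundary terms $\dot h_{\pm}(\vartheta_0^+)$ need not coincide for $K$ and $H$, make it substantially more work than simply swapping $K$ and $H$ in your final sentence.
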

\begin{proof}
 The boundary of $\mathfrak{J}(H,x_0)$ consists of two arcs of the left and right  involutes of $H$ starting at $x_0$.
By Corollary \ref{invasseck} they are $SDC_H$, then they are $SDC_K$; therefore by Theorem   \ref{bounding regionsJ(K.x_0)}
they cannot intersect the boundary of $\mathfrak{J}(K,x_0)$.
\end{proof}

\subsection{Miniminally connecting plane steepest descent curves }
\begin{figure}[htb]
\epsfig{file=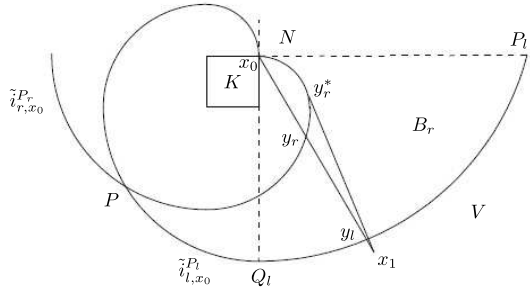, width=10cm} \caption{The regions $N$, ${B}_r$, $V$  when $K$ is a square.} \label{ultimissima}
\end{figure}

Given a point $x_1 \not \in K$, the segment joining it with its projection $x_0$ on $\pa K$ is a $SDC_K$ which {\em 
minimally }
connects the two points.

This subsection is devoted to consider when it would be  possible to connect a given point $x_0$
on the boundary of a plane convex
body $K$, with an  arbitrarily given  point $x_1\not \in K$,  by using a steepest descent 
curve $\ga\in SDC_K$.
Let us denote with $\Ga^K_{x_0,x_1}$ the class of the curves $\ga\in SDC_K$ starting at $x_0$ and ending at $x_1$.

 \begin{definition}
 Let  $\ga$ be a SDC  with end point  $y$ and $\eta$ be a SDC with starting point
 $y$; let us denote by $\ga\star\eta$, the curve joining $\ga$ with $\eta$ in the natural order, if it is a SDC curve.
  \end{definition}
 \begin{theorem}\label{connectingsdc}
  Let $x_0\in \pa K, x_1\not \in K$. Then $\Ga^K_{x_0,x_1}\neq \emptyset$ iff 
  \begin{equation}\label{neccessaryx1outerJ}
   x_1 \in cl(\RR^2\setminus (\mathfrak{J}(K,x_0))\cup K).
  \end{equation}
 If \eqref{neccessaryx1outerJ} holds,  there exist at most two   $\eta_i \in \Ga^K_{x_0,x_1}, i=1,2$ such that the 
 following   properties are true
   \begin{equation}\label{coeta1,2subsetcoga}
    \forall \ga \in \Ga^K_{x_0,x_1}\Rightarrow     
    co(\eta_1) \subset co (\ga) \mbox{\quad or \quad} co(\eta_2)\subset co(\ga)\mbox{\quad (or both)}
   \end{equation}
   and
  \begin{equation}\label{minimallength}
  \forall \ga \in \Ga^K_{x_0,x_1}\Rightarrow  |\ga| \geq \min_{i=1,2}\{|\eta_i|\}.
  \end{equation}
 \end{theorem}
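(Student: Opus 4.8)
The plan is to prove the three assertions in order, using the bounding-region machinery already in hand. For the necessity of \eqref{neccessaryx1outerJ}: if $\ga \in \Ga^K_{x_0,x_1}$ then $\ga$ is a $SDC_K$ with starting point $x_0 \in \pa K$, so Theorem \ref{bounding regionsJ(K.x_0)} gives $\ga \subset cl(\RR^2 \setminus (\mathfrak{J}(K,x_0) \cup K))$; since $x_1 \in \ga$, \eqref{neccessaryx1outerJ} follows immediately. For sufficiency, assume \eqref{neccessaryx1outerJ}. First I would dispose of the case $x_1 \in \pa K$ (then the arc of $\pa K$ from $x_0$ to $x_1$ in the appropriate direction, or a boundary arc followed by a segment, works — here one must check it is genuinely a $SDC_K$, which follows from Corollary \ref{invasseck}-type reasoning applied to boundary arcs, or one reduces to the generic case by a limiting argument via Theorem \ref{stability j-fences}). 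In the generic case $x_1 \notin K$, the idea is to build the connecting curve out of two pieces: an arc of an involute of $K$ starting at $x_0$, followed by a curve that joins the involute to $x_1$. Concretely, by Theorem \ref{family of involutes} there is a unique left involute $i_{l,\xi_l}$ through $x_1$ with starting point $\xi_l \in \pa K$, and likewise a unique right involute $i_{r,\xi_r}$ through $x_1$. The candidate $\eta_1$ is: the arc $arc^+(x_0,\xi_l)$ of $\pa K$, followed by the arc $i_{l,\xi_l}^{x_1}$ of the left involute from $\xi_l$ to $x_1$; the candidate $\eta_2$ is the mirror construction with $arc^-(x_0,\xi_r)$ and $i_{r,\xi_r}^{x_1}$. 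One shows each $\eta_i$ is a $SDC_K$: the boundary arc has the distancing-from-$K$ property trivially, the involute arc has it by (i) of Theorem \ref{increasing sec}, and the concatenation is self-distancing because the distance from any point of the earlier (boundary) piece to any later point is controlled — this is where one invokes Proposition \ref{lemmaecissec} together with Remark \ref{remga^K}, checking the angle condition \eqref{defselexpwithder} at the junction $\xi_l$ and along the involute. The constraint $\vartheta \le \vartheta^*_l$ from Theorem \ref{increasing sec}(ii) must be verified to hold on the relevant range, and this is exactly guaranteed by $x_1 \notin \mathfrak{J}(K,x_0)$, i.e.\ by \eqref{neccessaryx1outerJ}: if $x_1$ were reached only beyond $\vartheta^*_l$ the involute would already have entered $\mathfrak{J}(K,x_0)$ before escaping, contradicting \eqref{neccessaryx1outerJ} via the nesting Theorem \ref{lefttrapinlefttrap}.

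Next, the minimality property \eqref{coeta1,2subsetcoga}. Let $\ga \in \Ga^K_{x_0,x_1}$ be arbitrary. The key structural fact is that $\ga$ lies in a bounding region; by Theorem \ref{*}, $\ga = \ga_{x_1} \subset cl(\mathfrak{G}(K,x_1))$, and $cl(\mathfrak{G}(K,x_1)) = cl(\mathfrak{G}_l(K,x_1) \cup \mathfrak{G}_r(K,x_1))$. The region $\mathfrak{G}_l(K,x_1)$ is bounded precisely by $\pa K$, the segment $x_1 (x_1)_l$, and the left-involute arc $i_{l,\xi_l}^{x_1}$ that is the ``spine'' of $\eta_1$; similarly for the right side. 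Since $\ga$ is a $SDC_K$ ending at $x_1$ and $\ga$ must emanate from $x_0 \in \pa K$, its convex hull $co(\ga)$ contains $x_0$ and $x_1$ and is contained in the appropriate fence; I would argue that $co(\ga)$ must contain one of the two involute arcs bounding $\mathfrak{G}(K,x_1)$ — intuitively, a self-distancing curve from $K$ that gets from $x_0$ to $x_1$ inside the fence cannot ``cut the corner'' past the involute boundary, because the involute is itself the extremal $SDC_K$ and $co(\ga_{x_1})$, being a convex body with $x_1$ on its boundary and $\ga$ a $SDC_{co(\ga_{x_1})}$ up to that point, forces the relevant involute of $co(\ga_{x_1})$ (which contains the involute arc of $K$ by Theorem \ref{incj}) to lie in $co(\ga)$. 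Threading this correctly — identifying which of $\eta_1,\eta_2$ gets absorbed depending on which side of the fence $\ga$ ``exits'' through — is the delicate bookkeeping step.

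Finally \eqref{minimallength} follows from \eqref{coeta1,2subsetcoga} once one knows that for $SDC$'s, $co(\eta_i) \subset co(\ga)$ implies $|\eta_i| \le |\ga|$. This monotonicity of length under convex-hull inclusion for steepest descent curves is a known feature of the theory (it is implicit in the sharp length bounds of \cite{Manselli-Pucci}, \cite{Mainik}, \cite{Langetepe} and in \cite{MLV}); I would either cite it directly or give the short argument that the total turning / arc-length of a $SDC$ equals the perimeter-type functional of its nested convex family, which is monotone under inclusion. Then $|\ga| \ge |\eta_i| \ge \min_{i=1,2}|\eta_i|$. \textbf{Main obstacle:} the heart of the proof is establishing \eqref{coeta1,2subsetcoga} — showing that \emph{every} competitor $\ga$ has its convex hull swallowing one of the two canonical involute-based curves. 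This requires combining the fence-nesting (Theorem \ref{lefttrapinlefttrap}), the inclusion $\mathfrak{J}(K,\cdot) \subset \mathfrak{J}(co(\ga_{z}),\cdot)$ type statements (Theorem \ref{incj}), and a careful topological argument that a connected self-distancing curve inside $cl(\mathfrak{G}(K,x_1))$ running from the ``$\pa K$ side'' to $x_1$ separates the region in a way that traps one of the two bounding involutes inside $co(\ga)$; the case analysis for which of $\eta_1$, $\eta_2$ (or both) is forced is where care is needed, including degenerate configurations where $\xi_l = \xi_r$ or where an involute arc crosses the tangent segment (the two cases already distinguished in Definition \ref{defdefT_l(K,z)} and Lemma \ref{rightinvincludedinleft}).
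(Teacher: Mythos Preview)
Your necessity argument is fine and matches the paper. The sufficiency construction, however, is wrong. Your candidate $\eta_1$ runs along $arc^+(x_0,\xi_l)\subset\pa K$ and then up the left involute $i_{l,\xi_l}$ to $x_1$. An arc of $\pa K$ is \emph{not} a $SDC_K$ in general: take $K$ a disk and $y\in\inte K$; walking along $\pa K$ past the point of $\pa K$ nearest to $y$ strictly decreases $|x-y|$, violating the distancing-from-$K$ property. Even the self-distancing property \eqref{expandingproperty} fails on a boundary arc of angular extent $>\pi$. So your $\eta_i$ are not in $\Ga^K_{x_0,x_1}$ and the construction collapses. The paper builds the $\eta_i$ quite differently: it partitions $cl(\RR^2\setminus(\mathfrak{J}(K,x_0)\cup K))$ into four regions $N,B_l,B_r,V$ and, for $x_1\in V$ say, defines $\eta_r$ as an arc of the (modified) \emph{right involute starting at $x_0$} up to the first point $y^*_r$ whose tangent line passes through $x_1$, followed by the straight segment $y^*_r x_1$. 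The involute piece is a $SDC_K$ by Theorem~\ref{increasing sec}, and the tangent-segment piece preserves the normal-support condition, so the concatenation stays a $SDC_K$. In the regions $B_l,B_r$ only one of the two candidates can be built, and showing this requires a separate argument using Theorem~\ref{ga-ga_zincludedinJ(capbody)}.

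Your arguments for \eqref{coeta1,2subsetcoga} and \eqref{minimallength} also have gaps. For the convex-hull inclusion the paper does not use fence nesting or Theorem~\ref{incj}; it projects $\ga$ radially from $x_0$ onto an auxiliary closed curve $c_{x_1}$ (built from the two modified involutes and a segment through $x_1$) and argues by connectedness that the image must cover at least one of the two involute arcs, whence $co(\eta_i)\subset co(\ga)$. Your proposed route via ``$\ga$ separates the region so one involute is trapped in $co(\ga)$'' is plausible but not what is done, and you have not supplied the argument. More seriously, your derivation of \eqref{minimallength} rests on the claim that $co(\eta)\subset co(\ga)$ implies $|\eta|\le|\ga|$ for steepest descent curves. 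This is not a theorem in the cited literature and is not how the paper proceeds: the paper instead observes that the planar region bounded by $\ga\cup x_0x_1$ contains the \emph{convex} region bounded by $\eta_r\cup x_0x_1$, and then invokes the elementary fact that a curve bounding a superset of a convex region is at least as long as the convex boundary, yielding $|\ga|\ge|\eta_r|$ directly.
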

 \begin{proof}Let $\ga \in \Ga^K_{x_0,x_1}$.
  From 
   \eqref{gaouterJ} of Theorem \ref{bounding regionsJ(K.x_0)}, since $x_1 \in \ga$, then  
   \eqref{neccessaryx1outerJ}  follows.
   
   Let us prove now that \eqref{neccessaryx1outerJ} is sufficient. 
   Let us notice that $\RR^2\setminus (\mathfrak{J}(K,x_0)\cup K)$ can be divided 
   in four regions $N, {B}_l,{B}_r, V$, see Fig. \ref{ultimissima}, defined as follows 
   \begin{enumerate}
 \item[i)] the closed normal sector $N:=x_0+N_{K}(x_0)$ is the   angle bounded by the two  half lines 
  $t_l,t_r$ tangent at $x_0\in \pa K$ to the left and right
 involute $i_l:=i_{l,x_0}, i_r:=i_{r,x_0}$  respectively; this angle can be reduced to an half line, starting at $x_0$;
 \item[ii)] let $P$ be the first crossing  point between  $i_l$ and $ i_r$, see Theorem \ref{thetaintersection}; 
  $i_l$ is a SDC  until to $i_l(\vartheta^*_l)$, which will be  a point $Q_l$
 following $P$;  after $Q_l$ the involute $i_l$ is no more a SDC, see i) of Theorem \ref{increasing sec}.

 Let us  change $i_l$ after $Q_l$ with $j_{l,Q_l}$, the  left involute of $co(K\cup i_l^{Q_l})$ at $Q_l$.
  Let us define ${P}_l$ be the first intersection point of $j_{l,Q_l}$ with $\pa N$, and let 
  $$\tilde{i}_{l,x_0}^{{P}_l}:={i}_{l,x_0}^{{Q}_l}\star j_{l,Q_l}^{{P}_l}.$$
 It is not diffcult to see that $\tilde{i}_{l,x_0}^{{P}_l}\in \Ga^K_{x_0,P_l}$.
 Changing the left with the right,  $\tilde{i}_{r,x_0}$ and the point ${P}_r$ 
   can be constructed.  
 Let ${B}_r$ the union of the arc $i_{r,x_0}^P\setminus \{P\}$ with the 
 plane open region bounded by the segment $x_0{P}_l$, the arc $i_{r,x_0}^P$ and
 the arc $\tilde{i}_{l}^{P,{P}_l}$; let ${B}_l$ the union of the arc $i_{l,x_0}^P\setminus \{P\}$ with the 
 plane open region bounded by the segment $x_0{P}_r$,
 the arc $i_{l,x_0}^{P}$ and
 the arc $\tilde{i}_r^{P,{P}_r}$;
 \item[iii)] let $V$ the remaining region, i.e. $V=\RR^2\setminus (K\cup\mathfrak{J}(K,x_0)\cup N
 \cup {B}_l\cup {B}_r)$.
 \end{enumerate} 
   
  Let $x_1\in {B}_r\cup V$. On the oriented curve $\tilde{i}_{r,x_0}^{{P}_r}$
   there are two points so that their  tangent lines contain $x_1$. Let  $y^*_r$ be the first tangency point.
 
 Let $y_l, y_r$ be the intersection points of the half line $m$ starting at $x_0$ and containing $x_1$, with 
  $\tilde{i}_{l,x_0}^{{P}_l}$ and with $\tilde{i}_{r,x_0}^{{P}_r}$ respectively, see Fig. \ref{ultimissima}.
 
  Under the assumption \eqref{neccessaryx1outerJ}, $x_1$ belongs to one of the four regions $N, {B}_l,{B}_r, V$;
  let us prove now \eqref{coeta1,2subsetcoga}, \eqref{minimallength} in the four corresponding cases.
   \begin{enumerate}
  \item If $x_1 \in N$,
 then let  $\eta_1=\eta_2\in \Ga^K_{x_0,x_1}$  be the segment $x_0x_1$.
 Then \eqref{coeta1,2subsetcoga}, \eqref{minimallength}
 are trivial.
 
 \item  Let $x_1\in V$. Let
   \begin{equation}\label{eta_r}
    \eta_r:= \tilde{i}_{r,x_0}^{y^*_r}\star y^*_rx_1.
   \end{equation}
The curve  $\eta_r$ is a $SDC_K$, since the normal lines at all the points on the segment $y^*_rx_1$ have the same 
directions  and support
$\tilde{i}_{r,x_0}^{y^*_r}$ up to 
 $y^*_r$; then  $\eta_r$ is a $SDC_K$ and joins $x_0$ with $x_1$. Similarly is defined the $SDC_K$ 
 \begin{equation}\label{eta_l}
   \eta_l= \tilde{i}_{l,x_0}^{y^*_l}\star y^*_lx_1.
 \end{equation}
 Thus 
 $ \Ga^K_{x_0,x_1}$ is not empty and contains at least the two elements $\eta_l, \eta_r$. 
 
  Let us consider the connected closed curve 
 $$c_{x_1}:=\tilde{i}_{r,x_0}^{y_r}\cup y_ry_l\cup\tilde{i}_{l,x_0}^{y_l}. $$
 
 Let $\ga \in \Ga^K_{x_0,x_1}$, let $T\ni t \to x(t)\in\ga$ be a continuous parameterization of 
  $\ga$. Let us project from $x_0$ the curve  $\ga$ on $c_{x_1}$ and let $D$ be this projection.
  That is, for $ t\in T$, let $\la_t:=\{x_0+\la x(t), 0\leq \la \} $ and let
  \begin{equation}\label{C,D4} 
    D=\cup_{t\in T}\left(c_{x_1}\cap \la_t\right).   
  \end{equation}
  Clearly $D$ is a closed connected subset of $c_{x_1}$ containing $x_0$ and  the segment $y_ry_l$.
Thus $D$ contains
 at least one of the two connected components of $c_{x_1}$ joining $x_0$ with the $y_r,y_l$. Therefore the inclusions
 \begin{equation}\label{casersubsetD}
  (\tilde{i}_{r,x_0}^{y_r}\cup y_ry_l) \subset D,
 \end{equation}
or
\begin{equation}\label{caselsubsetD}
 (\tilde{i}_{l,x_0}^{y_l}\cup y_ry_l) \subset D,
\end{equation}
(or both) hold.

 Assume that \eqref{casersubsetD} holds and let  $\eta_r$ be defined as in  \eqref{eta_r}.
 Since, by construction of $D$, the set $co(D\cup \{x_1\}) $ is contained in 
  $ co(\ga)$, 
  then 
 $$co(\eta_r) \subset co(\ga).$$
  Similarly if  \eqref{caselsubsetD} holds, then 
 $$co(\eta_l) \subset co(\ga),$$
  with $\eta_l$ define by \eqref{eta_l}.
 Then \eqref{coeta1,2subsetcoga} is proved. It is not difficult to see that the region bounded by $\ga\cup x_0x_1$ 
 contains the convex region bounded by $i_{r,x_0}^{y^*_r}\cup y^*_rx_1\cup x_0x_1$. Thus 
 the bound
 $$|\ga| \geq |i_{r,x_0}^{y^*_r}|+|y^*_rx_1|$$
 holds;  similary procedure can be used for  left case.
 This proves \eqref{minimallength}.
   \item Let $x_1\in {B}_r$; the same argument as in the case 2. can be carried on up to  the  inclusions \eqref{casersubsetD},
   \eqref{caselsubsetD}.
   As in the step 2., when  case \eqref{casersubsetD}  holds, 
   the curve $\eta_r$ can be constructed and  $\eta_r$  is a $SDC_K$. 
   
Let us show that if $x_1\in {B}_r$ then 
    \eqref{caselsubsetD} cannot occur, so the curve $\eta_l$ can not to be constructed.
   
   Let us argue by contradiction. If \eqref{caselsubsetD} occurs, then let $z_1\neq x_0$
be the first point where $\ga$ crosses  the half line  $x_0P$. The point $z_1$ exists, since under 
the assumption \eqref{caselsubsetD}, $P\in D$. 
  Then  
  (Theorem \ref{bounding regionsJ(K.x_0)}) 
  $z_1$ does not belong to the open segment $x_0P$. Then, from \eqref{caselsubsetD} (see Definition \ref{jfence})  
  \begin{equation}\label{notgadasinistra}   
  co(\ga_{z_1})\supset \mathfrak{J}_l(K,x_0).
  \end{equation}
 Moreover $\ga\setminus \ga_{z_1}\cup \{z_1\}$ is a $SDC_{co(K\cup \ga_{z_1})}$,
 see Remark \ref{sdcofprevious}.
 Then by \eqref{notgadasinistra}  it is a
  $SDC_{co(K\cup\mathfrak{J}_l(K,x_0))}$.  Let us consider the convex  body $H=co(K\cup i_{l,x_0}^P)$. Since
 $$co(K\cup  \ga_{z_1})\supset co(K\cup\mathfrak{J}_l(K,x_0)) \supset H,$$
 then 
 $\ga\setminus \ga_{z_1}$ is a deleted $SDC_H$.
 From Theorem \ref{ga-ga_zincludedinJ(capbody)}, with $H$ in place of $K$, $x_1$ in place of $z$,
 it follows that
 $$\ga_{z_1,x_1}\subset cl(\RR^2\setminus \mathfrak{J}(H^{z_1},z_1)).$$
 
 Let $P_1$ where the right tangent from $z_1$ to $H$ crosses the arc 
 $\tilde{i}_{l,P}^{P_l}$.  Let us notice that $\tilde{i}_{l,P}^{P_l}$ is also an arc of the left involute of $H$ at $P$.
 Moreover $\tilde{i}_{l}^{P_1,{P}_l}$ is an arc of the left involute of $H^{z_1}$ starting at $P_1$, then it is  
  parallel  to the left involute of $H^{z_1}$ at 
 $z_1$ until it crosses the sector $N$; it turns out 
 that
 $${B}_r\subset \mathfrak{J}(H^{z_1},z_1)).$$
  From the two previous inclusions a contradiction comes out since
  $$x_1\in \ga_{z_1,x_1}\cap B_r =\emptyset.$$
 Then \eqref{caselsubsetD} cannot occur.
\item The case  $x_1\in {B}_l$ is similar to the previous one. 
 \end{enumerate}    The proof is  complete.\end{proof}
    
  \begin{definition}\label{minimalE} Under the  assumptions of Theorem \ref{connectingsdc},
   let us define $E^K_{x_0,x_1}$ the set of $\eta_i,i=1,2$ (possibly coinciding)
   as they are constructed in the proof of Theorem
   \ref{connectingsdc}, which satisfy  \eqref{coeta1,2subsetcoga},\eqref{minimallength}.
   These curves will be  called {\em minimally connecting} steepest descent curves  for the class $\Ga^K_{x_0,x_1}$. 
  \end{definition}
 % Let us notice that the curves of $E^K_{x_0,x_1}$ are the geodesics of the space of the SDC  connecting a point
 % of $\pa K$ with an exterior point $x_1$.
 \begin{definition}\label{defH} 
   Let $\ga:T\ni t \to x(t) $ be an absolutely continuous curve 
   and let $x(t)$ be a point of $\ga$, with tangent vector $\dot{x}(t)$.
  Let
  $$\mathcal{H}_{x(t)}:=\{y\in \RR^2: \langle \dot{x}(t),y-x(t)\rangle \leq 0\}.$$
  $\mathcal{H}_{x(t)}$  is an half plane bounded by the normal line  to $\ga$ at $x(t)$ and it is defined almost everywhere in $T$.
  For the curve  $\ga\setminus \ga_{x_1}$ (consisting of the points of $\ga$ 
  following $x_1$) let us define the  region:
  $$\mathfrak{H}(\ga, x_1):=\bigcap_{x_1 \preceq x(t), x(t) \in \ga } \mathcal{H}_{x(t)}.$$
   If $\mathfrak{H}(\ga, x_1)\neq \emptyset$, then it is a convex set.
   \end{definition}
  If $\ga$ is a SDC ($\ga$ is a $SDC_K$), then  condition \eqref{defselexpwithder} (respectively \eqref{defselexpwithderfromK}) implies that
  \begin{equation}\label{HcontainsgaK}
    \ga_{x_1} \subset \mathfrak{H}(\ga, x_1) \quad 
    ( \mbox{ respectively \quad}    \ga_{x_1} \cup K \subset \mathfrak{H}(\ga, x_1)).
  \end{equation}

  \begin{theorem}\label{connectingwithconditionH} Let $x_0\in \pa K, x_1\not \in K$. Let $\ga_1$ be a  $SDC$ with first point $x_1$. Necessary and sufficient
 conditions for the existence of a curve $\ga$, self-distancing curve from K, starting at $x_0$ and 
 satisfying $(\ga \setminus \ga_{x_1})\cup \{x_1\}=\ga_1$, are as follows:
 \begin{enumerate}
  \item[(a)] $x_1 \in cl(\RR^2\setminus \mathfrak{J}(K,x_0)$);
  \item[(b)] there exists $\eta \in E^K_{x_0,x_1}$ such that $ (K\cup \eta) \subset \mathfrak{H}(\ga_1, x_1)$;
  \end{enumerate} 
   moreover if (a) and (b) are satisfied, then $\ga_1 \in SDC_{co(K\cup\eta)}$.
 \end{theorem}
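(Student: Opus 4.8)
The statement is an ``if and only if,'' so the plan is to treat necessity and sufficiency separately, and then to verify the final assertion that $\ga_1 \in SDC_{co(K\cup\eta)}$. The key structural observation is that any admissible $\ga$ decomposes as $\ga = \ga_{x_1} \star \ga_1$, where $\ga_{x_1}$ is a $SDC_K$ from $x_0$ to $x_1$ and $\ga_1$ is prescribed; so the question is exactly whether one can choose $\ga_{x_1} \in \Ga^K_{x_0,x_1}$ compatibly with the already-fixed tail $\ga_1$. The compatibility requirement is that the whole concatenation still be self-distancing from $K$, and the region $\mathfrak H(\ga_1,x_1)$ of Definition \ref{defH} is precisely the set of points that every point of $\ga_1$ is allowed to ``see behind it'': by \eqref{HcontainsgaK}, for the concatenation to be a $SDC_K$ one needs $\ga_{x_1}\cup K \subset \mathfrak H(\ga_1,x_1)$.

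\medskip
\textbf{Necessity.} Suppose such a $\ga$ exists. Then $\ga$ is in particular a $SDC_K$ starting at $x_0$ with $x_1\in\ga$, so Theorem \ref{bounding regionsJ(K.x_0)} (equivalently \eqref{gaouterJ}) gives $x_1 \in cl(\RR^2\setminus(\mathfrak J(K,x_0)\cup K))$, which is (a). For (b): the arc $\ga_{x_1}\in \Ga^K_{x_0,x_1}$, so by Theorem \ref{connectingsdc} (the inclusion \eqref{coeta1,2subsetcoga}) there is $\eta\in E^K_{x_0,x_1}$ with $co(\eta)\subset co(\ga_{x_1})$. Now $\ga$ is a $SDC_K$ and $x_1\in\ga$, so \eqref{HcontainsgaK} applied to $\ga$ (with base point $x_1$) gives $\ga_{x_1}\cup K \subset \mathfrak H(\ga,x_1)$; but $\mathfrak H(\ga,x_1) = \mathfrak H(\ga_1,x_1)$ because only the points of $\ga$ following $x_1$ — i.e. the points of $\ga_1$ — enter the intersection defining $\mathfrak H$. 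Hence $co(\ga_{x_1})\cup K \subset \mathfrak H(\ga_1,x_1)$, and since $\eta\subset co(\eta)\subset co(\ga_{x_1})$ we conclude $(K\cup\eta)\subset\mathfrak H(\ga_1,x_1)$, which is (b).

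\medskip
\textbf{Sufficiency.} Assume (a) and (b), and let $\eta\in E^K_{x_0,x_1}$ be as in (b). By (a) and Theorem \ref{connectingsdc}, $\Ga^K_{x_0,x_1}\neq\emptyset$ and $\eta$ is a minimally connecting curve, so in particular $\eta$ itself is a $SDC_K$ from $x_0$ to $x_1$. Define $\ga := \eta \star \ga_1$. I claim $\ga$ is a self-distancing curve from $K$. Using the characterization \eqref{defselexpwithderfromK}, for a.e.\ point $x(t)$ of $\ga$ we must check $\langle\dot x(t),\,y-x(t)\rangle\le 0$ for all $y$ in $K$ together with all earlier points of $\ga$; by Proposition \ref{lemmaecissec} it suffices to check this for $y\in K$ and for $y$ ranging over the earlier points of $\ga$. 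If $x(t)$ lies on the $\eta$-part, the earlier points are also on $\eta$, so the inequality against $K$ and against those points is exactly the statement that $\eta$ is a $SDC_K$, which holds. If $x(t)$ lies on the $\ga_1$-part, then: against $K$ and against earlier points of $\ga_1$ the inequality holds because $\ga_1$ is a $SDC$ and, more to the point, because $(K\cup\eta)\subset\mathfrak H(\ga_1,x_1)$ — by definition of $\mathfrak H$ this says precisely that every normal half-plane $\mathcal H_{x(t)}$ at points $x(t)\succeq x_1$ of $\ga_1$ contains $K\cup\eta$, i.e.\ $\langle \dot x(t), y-x(t)\rangle\le 0$ for all $y\in K\cup\eta$; and the earlier points of $\ga$ are exactly $\eta$ together with the earlier points of $\ga_1$, handled by these two facts and Proposition \ref{lemmaecissec}. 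Thus $\ga$ is a $SDC_K$ starting at $x_0$ with $(\ga\setminus\ga_{x_1})\cup\{x_1\}=\ga_1$, as required. Finally, the displayed computation just given shows that each point $x(t)$ of $\ga_1$ distances from $K\cup\eta$, hence from $co(K\cup\eta)$ by Proposition \ref{lemmaecissec}; combined with $\ga_1\cap\pa_{rel}(co(K\cup\eta))\neq\emptyset$ (true since $x_1\in\eta\subset co(K\cup\eta)$) and the fact that $\ga_1$ is self-distancing, this is exactly $\ga_1\in SDC_{co(K\cup\eta)}$.

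\medskip
\textbf{Main obstacle.} The delicate point is the bookkeeping in the sufficiency direction: one must be careful that $\mathfrak H(\ga_1,x_1)$, defined only from the tail $\ga_1$, is exactly the object needed to control the self-distancing inequalities at $\ga_1$-points against the \emph{whole} past $\eta\cup(\ga_1)_{x(t)}$, and that Proposition \ref{lemmaecissec} legitimately reduces the continuum of past points to the finite/extreme data $K\cup\eta\cup\{\text{earlier }\ga_1\text{-points}\}$. A secondary subtlety is that (b) only guarantees \emph{one} $\eta\in E^K_{x_0,x_1}$ works, so in the necessity direction one must extract the correct $\eta$ from \eqref{coeta1,2subsetcoga} and check the containment of $\eta$ inside $co(\ga_{x_1})$ rather than inside $\ga_{x_1}$ itself — which is why the $\mathfrak H$ region, being convex, is the right tool.
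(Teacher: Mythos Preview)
Your proof is correct and follows essentially the same route as the paper's: necessity of (a) via Theorem~\ref{bounding regionsJ(K.x_0)}, necessity of (b) via Theorem~\ref{connectingsdc} together with \eqref{HcontainsgaK} and the identity $\mathfrak H(\ga,x_1)=\mathfrak H(\ga_1,x_1)$, and sufficiency by setting $\ga:=\eta\star\ga_1$. Where you spell out in detail the verification that $\eta\star\ga_1$ is a $SDC_K$ (splitting into $\eta$-points and $\ga_1$-points and invoking Proposition~\ref{lemmaecissec} and the definition of $\mathfrak H$), the paper simply asserts it and then appeals to Remark~\ref{sdcofprevious} for the final claim $\ga_1\in SDC_{co(K\cup\eta)}$. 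One small point: your parenthetical ``true since $x_1\in\eta\subset co(K\cup\eta)$'' does not by itself yield $x_1\in\pa_{rel}(co(K\cup\eta))$; the cleanest fix is exactly the paper's, namely to cite Remark~\ref{sdcofprevious} once you know $\ga=\eta\star\ga_1$ is a $SDC_K$.
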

\begin{proof}
 (a) is necessary by Theorem \ref{bounding regionsJ(K.x_0)}. (b) is necessary by Theorem \ref{connectingsdc} 
 and  by \eqref{HcontainsgaK} since $\mathfrak{H}(\ga_1, x_1)=\mathfrak{H}(\ga, x_1)$.
 Vice versa if (a), (b) hold, let us define $\ga:=\eta\star \ga_1$; then (by definition of $E^K_{x_0,x_1}$)
 $\eta$ is a $SDC_K$; thus, $\ga$ is a $SDC_K$ too and $\ga_1$ is a $SDC_{co(K\cup\eta)}$ (see Remark \ref{sdcofprevious}).
\end{proof}

 \section{Self-distancing sets and steepest descent curves}\label{sigmaareingamma}.
 
A self-distancing 
 set $\si$ will be  called SDC-extendible  if there exists a steepest descent curve $\ga$ such that $\si \subset \ga$.
 
This section  is devoted to investigate the following question:\\
{\em Can a self-distancing set $\si$  be extended to a steepest descent curve $\ga$?} \\
Let us call $\Ga_\sigma$ the family of SDC  $\ga$ wich extends $\si$.
The following example shows that $\Ga_\sigma$ can be empty.
\begin{example}\label{si^4} Let us consider in a coordinate system xy the points:
$$x_1=(0,0), x_2=(0,2), x_3=(1,\sqrt{8}), x_4=(-1,\sqrt{8}).$$
The set $\tilde{\si}=\{x_i, i=1,\ldots, 4\}$ is a self-distancing  set not SDC-extendibile. 
\end{example}
\begin{proof}By contradiction let $\ga \in \Ga_{\tilde{\si}}$, then any point $x$ on the arc $\ga_{x_3,x_4}$ satisfies the inequalities
$$3=|x_3-x_1| \leq |x-x_1| \leq |x_4-x_1|=3, \quad  |x_3-x_2| \leq |x-x_2|.$$
That is $x\in \pa B(x_1,3)$ and $x\in \RR^2\setminus B(x_2, |x_3-x_2| ).$
Since $\pa B(x_1,3)\cap ( \RR^2\setminus B(x_2, |x_3-x_2| ))=\{x_3,x_4\}$, the arc $\ga_{x_3,x_4}$
 consists of two points only, that is impossible. 
\end{proof}

Next theorem gives a necessary condition \eqref{xincl(Jco(sigma))}
in order to  extend a finite self-distancing  set $\si$ to a 
SDC; this condition is  based on the bounding sets 
introduced in \S \ref{fencesintro}.

 Let us define   $\si_{x}$  as the subset of $\si$  consisting of the point $x$ and of 
the previous ones on $\si$ (consistently with  \eqref{defgax}).
\begin{theorem}\label{necesssiextendeinga}
 Let $\si$ be a self expanding SDC-extendible set,  then  for all $x_0\in \si$ such that $\si_{x_0}\neq \{x_0\}$, 
 the following 
 inclusion
 \begin{equation}\label{xincl(Jco(sigma))} 
  (\si\setminus \si_{x_0}) \subset  cl(\RR^2\setminus \mathfrak{J}(co(\si_{x_0}),x_0))
\end{equation}
holds.
\end{theorem}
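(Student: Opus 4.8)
The plan is to deduce this from the already-established bounding result for $SDC_K$ curves, namely Theorem~\ref{bounding regionsJ(K.x_0)}, applied with $K = co(\si_{x_0})$. First I would fix $x_0 \in \si$ with $\si_{x_0} \neq \{x_0\}$ and let $\ga \in \Ga_\si$ be a steepest descent curve extending $\si$ (which exists since $\si$ is assumed SDC-extendible). The key observation is that $x_0 \in \ga$, so $\ga$ splits as $\ga_{x_0}$ and $\ga \setminus \ga_{x_0}$, and by the remark following Definition~\ref{defsecA} (``if $\ga$ is a SDC and $x\in\ga$ then $\ga\setminus\ga_x$ is a $SDC_{co(\ga_x)}$''), the tail $(\ga\setminus\ga_{x_0})\cup\{x_0\}$ is a self-distancing curve from the convex body $co(\ga_{x_0})$, with starting point $x_0 \in \pa\, co(\ga_{x_0})$.

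Next I would want to replace $co(\ga_{x_0})$ by the smaller convex body $co(\si_{x_0})$. Since $\si \subset \ga$ and the order on $\si$ is induced from $\ga$, we have $\si_{x_0} \subseteq \ga_{x_0}$, hence $co(\si_{x_0}) \subseteq co(\ga_{x_0})$. The curve $\ga^* := (\ga\setminus\ga_{x_0})\cup\{x_0\}$ has the distancing-from-$co(\ga_{x_0})$ property \eqref{defselexpwithderfromK}, so a fortiori it has the distancing-from-$co(\si_{x_0})$ property; moreover $x_0 \in \si_{x_0} \subset co(\si_{x_0})$, so $\ga^* \cap \pa_{rel}\, co(\si_{x_0}) \ni x_0$ provided $x_0$ is a boundary point of $co(\si_{x_0})$. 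That $x_0 \in \pa\, co(\si_{x_0})$ follows from the self-distancing property: by \cite[(i) of Lemma 4.6]{MLV} (used already in the proof of Corollary~\ref{corjcogax_0}) the final point of a self-distancing set lies on the boundary of the convex hull of the set. Hence $\ga^*$ is a $SDC_{co(\si_{x_0})}$ (or a deleted one, but in either case Theorem~\ref{bounding regionsJ(K.x_0)} applies since $x_0$ is on the relative boundary).

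Now I would invoke Theorem~\ref{bounding regionsJ(K.x_0)} with $K = co(\si_{x_0})$ and $\ga^K = \ga^*$: it gives
$$
\ga^* \subset cl\bigl(\RR^2 \setminus (\mathfrak{J}(co(\si_{x_0}),x_0) \cup co(\si_{x_0}))\bigr) \subset cl\bigl(\RR^2 \setminus \mathfrak{J}(co(\si_{x_0}),x_0)\bigr).
$$
Since $\si \setminus \si_{x_0} \subset \ga \setminus \ga_{x_0} \subset \ga^*$, the inclusion \eqref{xincl(Jco(sigma))} follows immediately. The one point requiring mild care is the degenerate case where $co(\si_{x_0})$ is a single segment rather than a body with interior; but $\si_{x_0}\neq\{x_0\}$ guarantees it is not a point, and the constructions of \S\ref{fences} and the $\mathfrak{J}$-fence only require $|\pa K|>0$, which holds for a segment (traversed on both sides), so the argument goes through. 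The main ``obstacle'' is really just bookkeeping: correctly transferring the distancing property from the larger hull $co(\ga_{x_0})$ to the smaller hull $co(\si_{x_0})$ and checking $x_0$ sits on the relevant boundary — both of which are routine once Proposition~\ref{lemmaecissec} and the cited lemma from \cite{MLV} are in hand.
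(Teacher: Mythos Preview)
Your argument is correct. It differs from the paper's proof in one small but genuine respect: the paper works with the larger hull $co(\ga_{x_0})$ throughout, applies Corollary~\ref{corjcogax_0} to get $\ga\setminus\ga_{x_0}\subset cl(\RR^2\setminus\mathfrak{J}(co(\ga_{x_0}),x_0))$, and then invokes the monotonicity of $\mathfrak{J}$-fences under inclusion (Theorem~\ref{incj}) to pass to the smaller fence $\mathfrak{J}(co(\si_{x_0}),x_0)$. You instead observe directly that the tail $\ga^*$ inherits the distancing property from the smaller body $co(\si_{x_0})\subset co(\ga_{x_0})$, so Theorem~\ref{bounding regionsJ(K.x_0)} applies with $K=co(\si_{x_0})$ from the start, and Theorem~\ref{incj} is never needed. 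Your route is marginally more economical; the paper's route makes the role of the inclusion $co(\si_{x_0})\subset co(\ga_{x_0})$ more geometrically explicit via the nesting of the fences. One minor point you leave implicit: to invoke Definition~\ref{defSEC+} with $K=co(\si_{x_0})$ you also need $\ga^*\subset\RR^2\setminus\relint co(\si_{x_0})$, but this follows immediately from the distancing property (if $x_1\in\ga^*$ were in $\relint co(\si_{x_0})$, take $y=x_1$ in \eqref{ineqdefEC+} to force $x_0=x_1$, contradicting $x_0\in\pa_{rel}\,co(\si_{x_0})$).
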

\begin{proof}
 Let $\ga\in \Ga_\si$, then $\si \subset \ga$ and $\ga $  is a SDC. Then 
 $co(\ga_{x_0}) \supset co(\si_{x_0})$, $x_0\in \pa co(\ga_{x_0}) \cap \pa co(\si_{x_0})$ (see 
\cite[(i) of Lemma 4.6]{MLV});
 from Theorem \ref{incj} ,  
 $$\mathfrak{J}(co(\ga_{x_0}),x_0) \supset \mathfrak{J}(co(\si_{x_0}),x_0);$$
 moreover
  $\si\setminus \si_{x_0} \subset \ga \setminus \ga_{x_0}$ and from Corollary \ref{corjcogax_0}, 
 $$\ga\setminus \ga_{x_0} \subset cl(\RR^2\setminus \mathfrak{J}(co(\ga_{x_0}),x_0)).$$
 The previous inclusions prove \eqref{xincl(Jco(sigma))}. 
\end{proof}
\begin{remark} In the Example \ref{si^4} it has been proved, in a simple way,  that 
$\tilde{\si}$ is not SDC-extendible.
Another way to prove this fact is to check that  the  condition \eqref{xincl(Jco(sigma))} does not hold for the point
$x_4$; let us notice that $\pa \mathfrak{J}(co(\tilde{\si}_{x_3}),x_3)\cap \{x\leq 0, y\geq 0\} $ consists of a circular arc
centered at $x_1$ with radius $2+\sqrt{13-4\sqrt{8}}$; then 
%the arc of the left involute $i_{l,x_3}$ of $co(\tilde{\si}_{x_3})$, bounding $\mathfrak{J}(co(\tilde{\si}_{x_3}),x_3)$, 
%consists of two piece of circular arcs, first one centered at $x_2$ from $x_3$ to $Q=(0,2+\sqrt{13-4\sqrt{8}})$,
%the second one 
%centered at $x_1$ from $Q$ to the intersection with the right involute, 
it is easy to see that $x_4$ is in the interior of $\mathfrak{J}(co(\tilde{\si}_{x_3}),x_3)$ and \eqref{xincl(Jco(sigma))}
is not satisfied. 

Let us show in the following example 
that \eqref{xincl(Jco(sigma))} is not sufficient for a self-distancing set $\si$ to be SDC-extendible.

\begin{example}\label{si^4+} Let us consider in a coordinate system xy the points:
$$\xi_1=(0,0), \xi_2=(0,2), \xi_3=(2,0), \xi_4=(\rho,2 ).$$
For $\sqrt{8} <\rho< \pi$, 
the set $\si:=\{\xi_i, i=1,\ldots, 4\}$ is a self-distancing set satisfying the condition \eqref{xincl(Jco(sigma))}
not SDC-extendibile. 
\end{example}
\begin{proof} It easy to see that $\si$ is a self-distancing set. Moreover  the initial piece of the left involute
of $co(\{\xi_1,\xi_2,\xi_3\})$ starting at $\xi_3$ consists of a circular arc centered at $\xi_2$ of ray $\sqrt{8}$ and amplitude
$\frac34\pi$. Then $\xi_4 \not \in \mathfrak{J}(co(\{\xi_1,\xi_2,\xi_3\}),\xi_3)$ and 
\eqref{xincl(Jco(sigma))} is verified with $x_0=\xi_3$, $\si\setminus \si_{x_0}=\{\xi_4\}$. 
 Trivially \eqref{xincl(Jco(sigma))} is verified also at   $x_0=\xi_2 $.
Let us prove now that $\Ga_{\si}$ is empty.
By contradiction let $\ga \in \Ga_{\si}$.
Let us consider $\ga_{\xi_3}$. Since $\xi_2$, $\xi_3$ have the same distance from $\xi_1$, 
arguing as in Example \ref{si^4}, $\ga_{\xi_3}$ is a circular arc $\mathit{C}$
centered
at $\xi_1$ from $\xi_2$ to $\xi_3$. Since the arc $\ga_{\xi_3}$ has the distancing property from the segment $\xi_1\xi_2$,
it is necessarily the arc of amplitude $\pi/4$ and not the complementary arc  .
Let $\eta=\xi_1\xi_2\star C$.
Since  $\mathit{C} \subset \ga$ and $\xi_2, \xi_3 \in \ga$, then
$co(\eta) \subset \co(\ga_{\xi_3}).$ Thus by Corollary \ref{incj} 
$$\mathfrak{J}(co(\ga_{\xi_3}),\xi_3)\supset \mathfrak{J}(co(\eta),\xi_3) .$$
Since the segment $\xi_2\xi_4$ is tangent to $\eta $ at $\xi_2$ and it has length $\rho$, less than  $\pi$,
the length of the arc $\eta_{\xi_2,\xi_3}$, then
$$ \xi_4 \in Int(\mathfrak{J}(co(\ga_{\xi_3}),\xi_3)).$$
This is in contradiction with  Corollary \ref{corjcogax_0} at the point $\xi_3$.
 \end{proof}

Let us introduce  definitions and preliminary facts needed to obtain   necessary and sufficient conditions
for the extendibility of 
a self-distancing set $\si$ structured as follows.
\end{remark}

 \begin{definition}\label{deftildecup}
  Let us denote with $\tilde{\cup}_i\si_i$ 
  a self-distancing set with a finite (or countable) family of closed connected components 
 $\si_1, \si_2, \ldots, \si_n,\ldots $,  ordered as the points of $\si$, that 
 is if $i < j , x\in \si_i, y \in \si_j \Rightarrow x\preceq y  $. 
 Let $x_i^-$ be  the first point  and let $x_i^+$ be the  last point  of $ \si_i$;
 if they are distinct (that is $\si_i$  does not reduce to a point) 
 as noticed in the introduction (\cite[Theorem 3.3]{Daniilidis3} and \cite[Theorem 4.10]{MLV}), 
 $\si_i$ is a SDC and it will denoted by $\ga_i$.
 \end{definition}

 \begin{lemma}\label{Hinclusions} Let $\si=\tilde{\cup}_i\si_i$ be a self-distancing set.
 A necessary condition for $\Ga_\si \neq \emptyset$ is that  for all   components $ \si_i$,  which
 are curves $\ga_i$,
 the following fact 
 \begin{equation}\label{inclH}
  \quad \cup_{j=1}^{i} \si_j \subset \mathfrak{H}(\ga_{i+1}, x_{i+1}^-),
   \end{equation}
holds. 
 \end{lemma}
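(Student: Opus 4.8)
The plan is to show that if some $\ga \in \Ga_\si$ exists, then the half-plane intersection $\mathfrak{H}(\ga_{i+1}, x_{i+1}^-)$, which by definition is cut out by the normal lines to $\ga$ at all points following $x_{i+1}^-$, must contain all of $\cup_{j=1}^{i}\si_j$. The key observation is that since $\ga$ extends $\si$, the component $\ga_{i+1}$ sits inside $\ga$ as the sub-arc between $x_{i+1}^-$ and $x_{i+1}^+$, and every point of $\cup_{j=1}^{i}\si_j$ is a point of $\ga$ that \emph{precedes} $x_{i+1}^-$ on $\ga$. So the statement reduces to a property of a single SDC: the points preceding a given point $x$ all lie in every half-plane $\mathcal{H}_{x(t)}$ with $x(t)\succeq x$.

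First I would record that $\ga$ is a SDC, hence by \eqref{defselexpwithder} its locally Lipschitz parameterization $t\to x(t)$ satisfies $\langle \dot{x}(t), x(\tau)-x(t)\rangle \le 0$ for a.e.\ $t$ and all $\tau \le t$. Fix the index $i$ and let $x_{i+1}^- = x(t_0)$ for the appropriate parameter value $t_0$. For any point $y \in \cup_{j=1}^{i}\si_j$, we have $y = x(\tau)$ with $\tau \le t_0$ (because the components are ordered as the points of $\si$, and $\si \subset \ga$ respects the ordering $\preceq$). Then for a.e.\ $t \ge t_0$ we have $\tau \le t_0 \le t$, so \eqref{defselexpwithder} gives $\langle \dot{x}(t), y - x(t)\rangle \le 0$, i.e.\ $y \in \mathcal{H}_{x(t)}$. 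Intersecting over all such $t$ (equivalently, over all $x(t)\in \ga$ with $x_{i+1}^- \preceq x(t)$, which is exactly the index set in Definition \ref{defH} applied to $\ga$ and $x_{i+1}^-$) yields $y \in \mathfrak{H}(\ga, x_{i+1}^-)$. Since $y$ was arbitrary, $\cup_{j=1}^{i}\si_j \subset \mathfrak{H}(\ga, x_{i+1}^-)$.

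The remaining point is to identify $\mathfrak{H}(\ga, x_{i+1}^-)$ with $\mathfrak{H}(\ga_{i+1}, x_{i+1}^-)$. This is where one must be slightly careful: Definition \ref{defH} builds $\mathfrak{H}(\cdot,x_1)$ from the normal half-planes at the points \emph{following} $x_1$ along the curve, and the tail $\ga \setminus \ga_{x_{i+1}^-}$ contains not only $\ga_{i+1}$ but also all later components $\si_j$, $j > i+1$. However, each later $\si_j$ lies in $co$ of the union of all earlier points, in particular in $co(\ga_{i+1} \cup \cup_{j\le i}\si_j)$ intersected with the normal conditions; more directly, since the tail of $\ga$ beyond $x_{i+1}^+$ is a self-distancing curve from $co(\ga_{x_{i+1}^+})$, its normal half-planes all contain $\ga_{x_{i+1}^+} \supset \ga_{i+1}$, so they contribute nothing new relative to the normal half-planes along $\ga_{i+1}$ once we also use that $\ga_{i+1}$ itself is a SDC. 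Thus the intersection defining $\mathfrak{H}(\ga, x_{i+1}^-)$ coincides with $\mathfrak{H}(\ga_{i+1}, x_{i+1}^-)$ (using \eqref{HcontainsgaK} to control the later terms), and \eqref{inclH} follows.

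The main obstacle is precisely this last identification: one has to argue that the extra normal half-planes coming from the portions of $\ga$ strictly between consecutive components, and from components after $\si_{i+1}$, do not shrink the region below $\mathfrak{H}(\ga_{i+1}, x_{i+1}^-)$ — equivalently that $\ga_{i+1}$ already "sees" $\cup_{j\le i}\si_j$ on the correct side. This is handled by the self-distancing property transported through $co$ (Proposition \ref{lemmaecissec}) together with \eqref{HcontainsgaK}: every half-plane $\mathcal{H}_{x(t)}$ for $x(t)$ in the tail contains $co(\ga_{x(t)})$, which contains $\ga_{i+1}$, so restricting the intersection to parameters in $\ga_{i+1}$ loses nothing. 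Everything else is a direct unwinding of the definitions.
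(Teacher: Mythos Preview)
Your second paragraph is correct and is exactly the paper's argument: it unwinds \eqref{HcontainsgaK} applied to the full extension $\ga$ at the point $x_{i+1}^-$, giving $\cup_{j\le i}\si_j\subset \ga_{x_{i+1}^-}\subset \mathfrak{H}(\ga,x_{i+1}^-)$.

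The last two paragraphs, however, create a difficulty that is not there and then try to resolve it with an argument that is not right. You do \emph{not} need to identify $\mathfrak{H}(\ga,x_{i+1}^-)$ with $\mathfrak{H}(\ga_{i+1},x_{i+1}^-)$; you only need the trivial inclusion
\[
\mathfrak{H}(\ga,x_{i+1}^-)\ \subset\ \mathfrak{H}(\ga_{i+1},x_{i+1}^-),
\]
which holds because $\mathfrak{H}(\ga_{i+1},x_{i+1}^-)$ is an intersection over a \emph{subset} of the half-planes used in $\mathfrak{H}(\ga,x_{i+1}^-)$ (namely only those $\mathcal{H}_{x(t)}$ with $x(t)\in\ga_{i+1}$), and intersecting over fewer sets gives a larger set. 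Equivalently, in your second paragraph you may simply restrict the intersection over $t$ to those $t$ with $x(t)\in\ga_{i+1}$ and conclude directly that $y\in\mathfrak{H}(\ga_{i+1},x_{i+1}^-)$.

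Your claimed equality $\mathfrak{H}(\ga,x_{i+1}^-)=\mathfrak{H}(\ga_{i+1},x_{i+1}^-)$ is in general false, and the justification you give does not establish it: the fact that each later half-plane $\mathcal{H}_{x(t)}$ contains $co(\ga_{x(t)})\supset\ga_{i+1}$ says nothing about whether it contains the (typically unbounded) region $\mathfrak{H}(\ga_{i+1},x_{i+1}^-)$. So drop the last two paragraphs; what remains is a correct proof, and it coincides with the paper's.
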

\begin{proof}
Let $\ga\in \Ga_\si$. Then  $\cup_{j=1}^{i} \si_j \subset \ga_{x_{i+1}^-}$.
Then \eqref{inclH} follows from  \eqref{HcontainsgaK}.
\end{proof}

\begin{definition}\label{defessential} Let $\si=\tilde{\cup}_i\si_i$ be a self-distancing set. 
A  subfamily  $E_\sigma\subset \Ga_\sigma $ is called {\em essential} for  $\Ga_\sigma $ if 
the following facts 
\begin{equation}\label{minimala}
 \mbox{a) \quad }\forall \ga \in \Ga_\sigma \; \exists\, \eta \in E_\sigma:  co(\eta) \subset co(\ga), 
\end{equation}
\begin{equation}\label{min|ga|} 
\mbox{b) \quad }\ga\in \Ga_\si \Rightarrow |\ga|\geq \min \{|\eta|, \eta \in E_{\si}  \}, 
\end{equation}
\begin{equation}\label{min|ga|=}
 \mbox{c) \quad }\ga\in \Ga_\si ,|\ga|= \min \{|\eta|, \eta \in E_{\si}  \}\Rightarrow \ga \in E_\si
\end{equation}
hold.

It can happen that $E_\si=\emptyset$. If $\Ga_\si =\emptyset$, let us define  $E_\sigma=\emptyset$ essential for $\Ga_\si$.
\end{definition}

Let us start to study a self-distancing set with two closed connected components.
\begin{lemma}\label{essential1-2}
 Let $\si^*=\tilde{\cup}_{i=1,2}\si_i$ be a self-distancing set and let $\rho$ be the segment joining $x_1^+,x_2^-$.
 There are five possibilities:
 \begin{enumerate}
  \item[p1)] Let $\si_1=\{x_1\}, \si_2=\{x_2\}$,
  then $E_{\si^*}=\{\rho\}\neq \emptyset$ is essential for $\Ga_\si$.
   \item[p2)]Let  $\si_1=\{x_1\}$, $\si_2=\ga_2$ ($\ga_2$ is a SDC); then a necessary 
   and sufficient condition for the extensibility of $ \si^*$ is 
  \begin{equation}\label{p3nc}
  \si_1 \subset  \mathfrak{H}(\ga_2, x_2^-);
  \end{equation}
   \end{enumerate}
moreover   $E_{\si^*}=\{\rho\star \ga_2\}$ is   essential    for $\Ga_{\si^*}$.
    \begin{enumerate}
  \item[p3)] Let $\si_1=\ga_1$ be a SDC, $\si_2=\{x_2\}$;  then a necessary 
   and sufficient condition for the extensibility of $ \si^*$ is 
  \begin{equation}\label{p2nc}
   \si_2 \subset cl(\RR^2\setminus (\mathfrak{J}(co(\ga_1),x_1^+)\cup co(\ga_1));
  \end{equation}
  moreover $E_{\si^*}=\{\ga_1\star \eta: \eta \in E_{x_1^+,x_2}^{co(\ga_1)}\}$ (see Definition \ref{minimalE}).
  \item[p4)] Let $\si_1=\ga_1 ,\si_2=\ga_2$; then a necessary 
   and sufficient condition for the extensibility of $ \si^*$ is that there exists a SDC $\eta$ such that:
    \begin{equation}\label{p4nc} 
\eta \in E^{co(\ga_1)}_{x_1^+, x_2^-} \mbox{\quad and \quad }  \si_1\cup \eta \subset  \mathfrak{H}(\ga_2, x_2^-);
  \end{equation}
   \end{enumerate}
 the related  essential family   is
$  E_{\si^*}=   \{\ga_1\star\eta \star\ga_2 : \eta \mbox{\, satisfies }   \eqref{p4nc}   \}$.
   \item[p5)] If in the cases p2),p3),p4)   the 
   corresponding necessary and sufficient conditions do not hold, then $$   E_{\si^*}= \Ga_{\si^*}=\emptyset .$$
  \end{lemma}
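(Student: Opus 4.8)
The plan is to treat the five cases p1)--p5) in sequence, reducing each one to results already proved for the single-component situation (Theorems \ref{connectingsdc} and \ref{connectingwithconditionH}) together with the composition remark \ref{sdcofprevious} and the necessary conditions of Lemmas \ref{Hinclusions} and \ref{essential1-2}'s companion facts. In every case the extensibility question is: can one produce a SDC $\ga$ with $\si^*\subset\ga$? Since $\si^*$ has two ordered closed connected components $\si_1\preceq\si_2$, any such $\ga$ decomposes as $\ga=\ga'\star\si_1\star\eta\star\si_2\star\ga''$ where $\ga',\ga''$ are (possibly trivial) SDC pieces and $\eta$ is the connecting arc between $x_1^+$ and $x_2^-$; shrinking $\ga'$ and $\ga''$ to points only enlarges the room available, so it suffices to look for $\ga=\si_1\star\eta\star\si_2$, i.e. to connect $x_1^+$ to $x_2^-$ by a SDC that keeps distancing from everything that came before and whose tail $\si_2$ keeps distancing from everything. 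This is exactly the content of Theorem \ref{connectingwithconditionH} applied with $K=co(\si_1)$, and of Theorem \ref{connectingsdc} when $\si_2$ is a point.

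First I would dispose of p1): if both components are single points, $\si^*=\{x_1,x_2\}$ with $x_1\preceq x_2$, then the segment $\rho=x_1x_2$ is trivially a $SDC$ containing $\si^*$, so $\Ga_{\si^*}\neq\emptyset$; for any $\ga\in\Ga_{\si^*}$ one has $co(\rho)\subset co(\ga)$ since $co(\ga)\supset co(\{x_1,x_2\})=co(\rho)$, and $|\ga|\ge|x_2-x_1|=|\rho|$ by the self-distancing inequality \eqref{IBDCmonlinearly} (any curve through $x_1$ and $x_2$ has length at least the chord), with equality forcing $\ga=\rho$; hence $E_{\si^*}=\{\rho\}$ satisfies \eqref{minimala}--\eqref{min|ga|=}. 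For p2), $\si_1=\{x_1\}$, $\si_2=\ga_2$ a SDC with first point $x_2^-$: the only constraint is that $\ga_2$, as the tail of the composite curve, keeps distancing from $x_1$; since $x_1\in\ga_{x_2^-}$ for any extension, \eqref{HcontainsgaK} gives $x_1\in\mathfrak{H}(\ga_2,x_2^-)$ as necessary, and conversely \eqref{p3nc} lets us build $\ga=\rho\star\ga_2$ with $\rho=x_1x_2^-$: the segment $\rho$ is a $SDC_{\{x_1\}}$, and by Remark \ref{sdcofprevious} $\ga_2$ becomes a $SDC$ from $co(\rho)=\rho$ precisely when $\rho\subset\mathfrak{H}(\ga_2,x_2^-)$, which follows from \eqref{p3nc} and Proposition \ref{lemmaecissec} since $\rho=co(\{x_1,x_2^-\})$ and $x_2^-\in\ga_2\subset\mathfrak{H}(\ga_2,x_2^-)$. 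Minimality of $\rho\star\ga_2$ is then immediate as in p1).

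For p3), $\si_1=\ga_1$ a SDC, $\si_2=\{x_2\}$: any extension restricts, after $\ga_1$, to a $SDC_{co(\ga_1)}$ joining $x_1^+\in\pa co(\ga_1)$ to $x_2$, so Theorem \ref{connectingsdc} (with $K=co(\ga_1)$) gives necessity and sufficiency of \eqref{p2nc}; moreover that same theorem produces the (at most two) minimally connecting curves $\eta\in E^{co(\ga_1)}_{x_1^+,x_2}$, and $\ga_1\star\eta$ is the claimed essential family --- the verification of \eqref{minimala}--\eqref{min|ga|=} is inherited from the corresponding properties \eqref{coeta1,2subsetcoga},\eqref{minimallength} of the $\eta_i$, using that $co(\ga_1\star\eta)=co(\ga_1\cup\eta)$ and $|\ga_1\star\eta|=|\ga_1|+|\eta|$ with $|\ga_1|$ fixed. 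For p4), $\si_1=\ga_1$, $\si_2=\ga_2$: this is the genuinely two-sided case, and it is exactly Theorem \ref{connectingwithconditionH} with $K=co(\ga_1)$, $x_0=x_1^+$, $x_1=x_2^-$, $\ga_1$ there $=\ga_2$ here --- condition (a) of that theorem is subsumed in the requirement $\eta\in E^{co(\ga_1)}_{x_1^+,x_2^-}\neq\emptyset$ (which already presupposes \eqref{neccessaryx1outerJ}), and condition (b) is precisely $co(\ga_1)\cup\eta\subset\mathfrak{H}(\ga_2,x_2^-)$, equivalent by Proposition \ref{lemmaecissec} to $\si_1\cup\eta\subset\mathfrak{H}(\ga_2,x_2^-)$ as in \eqref{p4nc}; the conclusion of Theorem \ref{connectingwithconditionH} then says $\ga_2\in SDC_{co(co(\ga_1)\cup\eta)}$, so $\ga_1\star\eta\star\ga_2$ is a SDC, giving sufficiency, while necessity of \eqref{p4nc} combines Theorem \ref{connectingsdc} (for the existence of a connecting $\eta$ with $co(\eta)\subset co(\ga')$ for the actual connecting arc $\ga'$ of any $\ga\in\Ga_{\si^*}$) with Lemma \ref{Hinclusions}. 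The essential family and its three defining properties then follow by assembling the single-step essential families, using again additivity of length and monotonicity of convex hulls under $\star$. Case p5) is simply the contrapositive: if in p2),p3),p4) the stated condition fails, then by the necessity just proved no extension exists, so $\Ga_{\si^*}=\emptyset$ and, by Definition \ref{defessential}, $E_{\si^*}=\emptyset$.

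The main obstacle I anticipate is the bookkeeping in p4): one must be careful that the connecting arc $\eta$ is chosen \emph{before} one knows $\ga_2$ can be attached, so the two conditions in \eqref{p4nc} are genuinely coupled, and one needs Theorem \ref{connectingsdc}'s "at most two $\eta_i$" together with Theorem \ref{connectingwithconditionH}'s half-plane test $\mathfrak{H}(\ga_2,x_2^-)$ to know the search is finite; also establishing property c) \eqref{min|ga|=} (a length-minimizing extension actually lies in $E_{\si^*}$) requires tracing through the rigidity part of the minimality arguments in Theorem \ref{connectingsdc}, namely that equality in the length bound forces the curve to coincide with one of the extremal involute-plus-segment curves. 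Everything else is a routine reduction to the already-proved one-component statements.
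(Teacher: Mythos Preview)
Your approach is correct and is essentially the same as the paper's: each case is reduced to the appropriate prior result (p3 to Theorem~\ref{connectingsdc} with $K=co(\ga_1)$, p4 to Theorem~\ref{connectingwithconditionH} with $K=co(\ga_1)$, $x_0=x_1^+$, $x_1=x_2^-$, and p2's necessity to Lemma~\ref{Hinclusions}). Your version is more detailed than the paper's, particularly in spelling out the verification of the essential-family properties \eqref{minimala}--\eqref{min|ga|=}, which the paper leaves implicit; your caution about the rigidity part of \eqref{min|ga|=} is well placed, since neither the paper's proof of this lemma nor that of Theorem~\ref{connectingsdc} makes the equality case fully explicit.
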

\begin{proof}
 case p1) is trivial; in the case p2) the inclusion \eqref{p3nc} 
 follows from Lemma \ref{Hinclusions} with $i=2$ in \eqref{inclH}. It is also trivial that it is  sufficient.
The case p3) follows from Theorem \ref{connectingsdc} with $K=co(\ga_1)$. The case p4) follows from Theorem 
\ref{connectingwithconditionH} with $\ga_2$ in place of $\ga_1$, $K=co(\ga_1)$, $x_1^+$ in place of $x_0$, 
$x_2^-$ in place of $x_1$.
 \end{proof}

An easy sufficient  condition to check if  $\si=\tilde{\cup}_i\si_i$ is extendible is the following
\begin{theorem}\label{suffcicentiforest} Let $\si=\tilde{\cup}_i\si_i$ be a self-distancing set. Let  $\si^{(i)}=\tilde{\cup}_{j=1}^{i} \si_j$.
If \eqref{inclH} and 

\begin{equation}\label{x^-inN}
  x^-_{i+1}\in N_{co(\si^{(i)})}(x^+_i), \quad \forall  i\geq 1 
\end{equation}
hold, then $\Ga_\si \neq \emptyset$ and $\overline{\ga}$, which  linearly and orderly connects  $\si_i$, $\si_{i+1}$ with 
the segments $x_i^+x^-_{i+1}$, is a SDC and it
has minimal length in $\Ga_\si$. 
\end{theorem}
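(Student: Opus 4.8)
The plan is to verify that the curve $\overline{\ga}$, obtained by joining the components $\si_i$ with the connecting segments $\rho_i := x_i^+ x_{i+1}^-$, is a self-distancing curve and that no shorter curve can lie in $\Ga_\si$. For the first part I would check condition \eqref{defselexpwithder} (equivalently \eqref{expandingproperty}) pointwise along $\overline{\ga}$: at a point interior to some piece $\si_i = \ga_i$ the curve is already a SDC, so the relevant inequality holds against all earlier points of $\ga_i$; the content is to verify it also against all earlier pieces $\si_j$, $j<i$, and against the earlier connecting segments. At a point $x(t)$ on a connecting segment $\rho_i$, the tangent direction is the fixed unit vector $u_i := (x_{i+1}^- - x_i^+)/|x_{i+1}^- - x_i^+|$, and I must show $\langle u_i, x(\tau) - x(t)\rangle \le 0$ for every earlier point $x(\tau)$.

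First I would set up the induction on $i$. Let $\overline{\ga}^{(i)}$ denote the curve built from $\si^{(i)} = \tilde\cup_{j=1}^i \si_j$ together with the segments $\rho_1,\dots,\rho_{i-1}$; assume inductively that $\overline{\ga}^{(i)}$ is a SDC with end point $x_i^+$ and that $\co(\overline{\ga}^{(i)}) = \co(\si^{(i)})$ (the latter because each connecting segment lies in the convex hull of its endpoints, which are already in $\si^{(i)}$). Then I add the segment $\rho_i$ from $x_i^+$ to $x_{i+1}^-$, and then the piece $\si_{i+1}$. For the segment $\rho_i$: its tangent vector is $u_i$, which points from $x_i^+$ toward $x_{i+1}^-$; hypothesis \eqref{x^-inN} says $x_{i+1}^- \in x_i^+ + N_{\co(\si^{(i)})}(x_i^+)$, i.e. $\langle x_{i+1}^- - x_i^+, y - x_i^+\rangle \le 0$ for all $y \in \co(\si^{(i)})$. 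Since every earlier point of $\overline{\ga}$ lies in $\co(\si^{(i)})$ and, along $\rho_i$, $x(t) = x_i^+ + s u_i$ for some $s \ge 0$, we get $\langle u_i, y - x(t)\rangle = \langle u_i, y - x_i^+\rangle - s \le 0$ — so the distancing-from-$\co(\si^{(i)})$ property holds along all of $\rho_i$, which gives \eqref{defselexpwithder} against every earlier point. This also shows $\overline{\ga}^{(i)} \star \rho_i$ is a $SDC_{\co(\si^{(i)})}$ up to $x_{i+1}^-$.

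Next, for the piece $\si_{i+1} = \ga_{i+1}$ appended after $\rho_i$: along $\ga_{i+1}$ the inequality \eqref{defselexpwithder} against earlier points of $\ga_{i+1}$ itself holds because $\ga_{i+1}$ is a SDC; against the earlier part $\overline{\ga}^{(i)} \cup \rho_i$ it follows from hypothesis \eqref{inclH}, which says $\cup_{j=1}^i \si_j \subset \mathfrak{H}(\ga_{i+1}, x_{i+1}^-)$, i.e. for a.e.\ $t$ with $x(t) \in \ga_{i+1}$ and all $y$ in that union, $\langle \dot x(t), y - x(t)\rangle \le 0$ — and I would note that $\rho_i \subset \co(\cup_{j\le i}\si_j) \subset \mathfrak{H}(\ga_{i+1},x_{i+1}^-)$ by convexity of $\mathfrak{H}$, so the connecting segment is covered too (invoking Proposition \ref{lemmaecissec} / Remark \ref{remga^K} to pass from vertices to the hull if needed). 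Closing the induction, $\overline{\ga}$ satisfies \eqref{defselexpwithder} everywhere, hence is a SDC, so $\Ga_\si \ne \emptyset$.

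Finally, for the minimal-length claim: any $\ga \in \Ga_\si$ contains all of $\si$, so it contains each $\si_i$, and between the last point $x_i^+$ of $\si_i$ and the first point $x_{i+1}^-$ of $\si_{i+1}$ it contains a sub-arc $\ga_{x_i^+, x_{i+1}^-}$ joining these two points; since the straight segment is the shortest curve between two points, $|\ga_{x_i^+,x_{i+1}^-}| \ge |\rho_i| = |x_i^+ - x_{i+1}^-|$. Summing over $i$ and adding $\sum_i |\si_i| = \sum_i |\ga_i|$ (which every $\ga \in \Ga_\si$ also contains as disjoint sub-arcs) gives $|\ga| \ge \sum_i |\ga_i| + \sum_i |\rho_i| = |\overline{\ga}|$. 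Hence $\overline{\ga}$ has minimal length in $\Ga_\si$. The main obstacle is the bookkeeping in the induction — making sure that when we append $\rho_i$ and then $\ga_{i+1}$, the distancing inequality is checked against the \emph{entire} prior curve (all earlier components and all earlier connecting segments, not just the immediately preceding one), which is exactly where hypotheses \eqref{inclH} and \eqref{x^-inN} together with convexity of $\co(\si^{(i)})$ and of $\mathfrak{H}$ do the work.
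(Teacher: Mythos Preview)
Your argument is correct and takes a more elementary route than the paper. The paper proceeds by induction but at each step invokes Theorem~\ref{connectingsdc}: since assumption~\eqref{x^-inN} places $x_{i+1}^-$ in the normal sector $N$ at $x_i^+$ (with $K=\co(\si^{(i)})=\co(\ga^{(i)})$), case~1 of that proof applies, so the segment $x_i^+x_{i+1}^-$ is the unique minimal connecting curve in $E^{\co(\ga^{(i)})}_{x_i^+,x_{i+1}^-}$, and the step is closed via Lemma~\ref{essential1-2}. You instead verify inequality~\eqref{defselexpwithder} directly on each newly added piece and obtain minimality from the trivial fact that a straight segment is the shortest plane curve between two points. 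This avoids the fence and involute machinery entirely and is self-contained; the paper's route has the advantage of embedding the result in the essential-family language used immediately afterwards in Theorem~\ref{teoremafinalepersigmaexpanding}.

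One slip to correct: you write $\rho_i \subset \co(\cup_{j\le i}\si_j)$, but this is false, since the endpoint $x_{i+1}^-$ of $\rho_i$ lies in $\si_{i+1}$, not in $\si^{(i)}$. The inclusion $\rho_i \subset \mathfrak{H}(\ga_{i+1}, x_{i+1}^-)$ that you actually need is nevertheless true, for a slightly different reason: $x_i^+ \in \mathfrak{H}(\ga_{i+1}, x_{i+1}^-)$ by~\eqref{inclH}, and $x_{i+1}^- \in \mathfrak{H}(\ga_{i+1}, x_{i+1}^-)$ because $\ga_{i+1}$ is itself a SDC with first point $x_{i+1}^-$ (so $\langle \dot x(t), x_{i+1}^- - x(t)\rangle \le 0$ for a.e.\ $x(t)\in\ga_{i+1}$); then convexity of $\mathfrak{H}$ captures the whole segment. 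For the earlier connecting segments $\rho_j$, $j<i$, your convex-hull argument is fine, since both of their endpoints lie in $\si^{(i)}$.
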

\begin{proof}Let us argue by induction on the self-distancing set $\si^{(i)}$. 
The case $i=1$ is contained in Lemma \ref{essential1-2}, since the assumptions 
\eqref{inclH}  and \eqref{x^-inN} are enough to get the corresponding assumptions in the cases p1),p2),p3),p4).
Moreover in the case p4) the curve $\eta=x_1^+x_2^-$ is such that $\ga^{(2)}=\ga_1\star \eta\star\ga_2$ is the SDC  
of minimal length extending $\si^{(2)}$.

Let $\ga^{(i)}$ the curve 
of minimal length extending $\si^{(i)}$.
Since the normal sector to $co(\si^{(i)})$ at 
$x^+_i$ coincides with  the  sector $N$ in the proof of  Theorem \ref{connectingsdc}, with $x^+_i$ in place of $x_0$,
$x^-_{i+1}$ in place of $x_1$ then, the assumption \eqref{x^-inN} implies that the case 1. of the proof of Theorem
\ref{connectingsdc} occurs. It follows that
$$\ga^{(i+1)}=\ga^{(i)}\star x^+_ix^-_{i+1}\star\si_{i+1}$$
 is of minimal length in $\Ga_{\si^{(i+1)}}$. Then,
 $\overline{\ga}=\cup_i\ga^{(i+1)}$ is of minimal length in $\Ga_\si$.
\end{proof}
 \begin{rem} Since, as noticed in 
  \cite[II, Section 2]{Manselli-Pucci}, $\forall u,w \in \si_{x_i^+}$ the angle  $ux_i^+w$ has opening less than  $\pi/2$, thus
  the related  normal sector in  \eqref{x^-inN} has opening greater or equal than $\pi/2$; then to check that 
  $x^-_{i+1}$ satisfies \eqref{x^-inN},
 is   easier than to check that $x^-_{i+1}$ is outside of the $\mathfrak{F}$-fence as in \eqref{p2nc}.  
 \end{rem}

Lemma \ref{Hinclusions} and  Theorem \ref{suffcicentiforest} give  
only  necessary and  only sufficient conditions, respectively, for
the extensibility of self expanding sets. Let us give 
  definitions in order to get necessary and sufficient conditions.

\begin{definition}\label{defessenzialeiterativo} Let $\si=\tilde{\cup}_j\si_j$ be a self-distancing  set. 
Let $E_i, i=2, \ldots, n, ....$ be defined by induction as follows:
 
 $E_2$ is the essential family related to  $\tilde{\cup}_{j=1}^2\si_j$, as given by 
  Lemma \ref{essential1-2};
  if  $i\geq 2 $, the $E_{i+1}$  related to $\tilde{\cup}_{j=1}^{i+1}\si_j$  is defined as follows:
  \begin{enumerate}
   \item[i)] if $E_i=\emptyset$ then $E_{i+1}=\emptyset $;
   \item[ii)] if $E_i\neq \emptyset$, let us consider for all $\eta \in E_i$ the essential family $E(\eta)$
   (see Lemma \ref{essential1-2})
   related to
   $\eta \tilde{\cup} \si_{i+i}$ (see Definition \ref{deftildecup}). Let
   $E_{i+1}=\cup_{\eta \in E_i} E(\eta)$.
     \end{enumerate}
\end{definition}
Let us notice that $\{E_i\}$ is ordered by inclusion and $E_{i+1}$, if it is not empty, consists of $2^i$ curves at most.
\begin{theorem}\label{teoremafinalepersigmaexpanding} Let $\si=\tilde{\cup} \si_j$ be  a self  expanding set and  let  
 $E_2, E_3,\ldots ,E_i, \ldots $ be the sequence (finite or countable)  of the   essential families associated 
 to $\si$. 
 Then $\Ga_\si\neq \emptyset$ iff  $\forall i\geq 2$ the essential family $E_i$ is not empty.
 \end{theorem}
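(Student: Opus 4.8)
The plan is to prove both implications by induction, exploiting the recursive construction of the essential families $E_i$ together with the single-step results of Lemma \ref{essential1-2} and Theorem \ref{connectingwithconditionH}. The key observation is that $\Ga_\si \neq \emptyset$ if and only if every finite truncation $\si^{(i)}=\tilde{\cup}_{j=1}^i\si_j$ is extendible; this reduces (in the countable case) to showing that nonemptiness propagates through the truncations, and in the finite case it is simply the statement for $\si=\si^{(n)}$.

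First I would prove the \emph{necessity} ($\Ga_\si\neq\emptyset \Rightarrow E_i\neq\emptyset$ for all $i$). Suppose $\ga\in\Ga_\si$. I claim that for each $i\geq 2$ there is a curve $\eta_i\in E_i$ with $co(\eta_i)\subset co(\ga_{x_i^+})$ (where $x_i^+$ is the last point of $\si_i$, or of $\si$ if $\si$ is finite and $i$ is maximal). The base case $i=2$ is exactly the ``essential'' property \eqref{minimala} in Lemma \ref{essential1-2} applied to $\tilde{\cup}_{j=1}^2\si_j$, since $\ga_{x_2^+}\in \Ga_{\si^{(2)}}$: each of the cases p1)--p4) furnishes such an $\eta_2$, while p5) is excluded because $\Ga_{\si^{(2)}}\neq\emptyset$. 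For the inductive step, given $\eta_i\in E_i$ with $co(\eta_i)\subset co(\ga_{x_i^+})$, consider the curve $\ga$ restricted to run from $x_i^+$ on to $x_{i+1}^+$; it is a $SDC$ whose initial convex hull contains $co(\eta_i)$, hence (via Remark \ref{sdcofprevious} and monotonicity of the $\mathfrak{J}$-fences, Theorem \ref{incj}) $\ga\setminus\ga_{x_i^+}$ extends the pair $\eta_i\,\tilde{\cup}\,\si_{i+1}$. Lemma \ref{essential1-2} applied to $\eta_i\tilde{\cup}\si_{i+1}$ then yields $\eta_{i+1}\in E(\eta_i)\subset E_{i+1}$ with $co(\eta_{i+1})\subset co(\ga_{x_{i+1}^+})$. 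In particular $E_{i+1}\neq\emptyset$.

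Next I would prove \emph{sufficiency} ($E_i\neq\emptyset$ for all $i\Rightarrow \Ga_\si\neq\emptyset$). Here one builds $\ga$ step by step. Pick any $\eta_2\in E_2$; this is a $SDC$ extending $\si^{(2)}$ by the sufficiency half of Lemma \ref{essential1-2} (the relevant necessary-and-sufficient condition among \eqref{p3nc}, \eqref{p2nc}, \eqref{p4nc} must hold, since otherwise p5) would force $E_2=\emptyset$). Having produced $\eta_i\in E_i$ which is a $SDC$ extending $\si^{(i)}$, use $E_{i+1}=\cup_{\eta\in E_i}E(\eta)\neq\emptyset$: choose $\eta_{i+1}\in E(\eta_i)$ for \emph{some} $\eta\in E_i$; but to keep the construction consistent I want $\eta$ to be the $\eta_i$ I already selected. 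This is the one delicate point — $E_{i+1}$ being nonempty only guarantees $E(\eta)\neq\emptyset$ for some $\eta\in E_i$, not for the particular $\eta_i$ chosen at the previous stage. The remedy is to run the selection with foresight: in the finite case there are only finitely many (at most $2^{i-1}$) curves in each $E_i$ and finitely many levels, so a finite backward/König-type argument on the tree whose nodes at level $i$ are the elements of $E_i$ and whose edges record the relation ``$\eta_{i+1}\in E(\eta_i)$'' produces a full branch $\eta_2,\eta_3,\ldots,\eta_n$; concatenating gives $\overline\ga=\eta_n\in\Ga_\si$. In the countable case the same tree is infinite but finitely branching at each level and nonempty at every level, so König's lemma gives an infinite branch, and $\overline\ga=\cup_i\eta_i$ is the desired $SDC$ extending $\si$ (it is a $SDC$ because each finite truncation is, and the $SDC$ property \eqref{defselexpwithder} is checked on triples, hence is preserved under increasing unions).

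The main obstacle is precisely this \textbf{tree/König-lemma bookkeeping}: one must set up the levels $E_i$ as a finitely-branching tree with the edge relation induced by Definition \ref{defessenzialeiterativo}(ii), observe each level is nonempty by hypothesis, and note that every node at level $i+1$ has a parent at level $i$ (immediate from $E_{i+1}=\cup_{\eta\in E_i}E(\eta)$), so that a branch exists. Everything else is assembling already-proven facts: the single-step characterizations come from Lemma \ref{essential1-2}, the compatibility of successive essential families with Remark \ref{sdcofprevious}, and the monotonicity of $\mathfrak{J}$-fences from Theorem \ref{incj}. I would also remark that in the finite case the statement ``$E_i$ nonempty for all $i$'' reduces to ``$E_n$ nonempty'' since the sequence $\{E_i\}$ is ordered by inclusion (as noted right after Definition \ref{defessenzialeiterativo}), which slightly streamlines the exposition.
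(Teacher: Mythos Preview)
Your proposal is correct and follows essentially the same route as the paper. The paper's argument is much terser---for necessity it simply observes that $\ga_{x^+_{i+1}}\in\Ga_{\tilde{\cup}_{j=1}^{i+1}\si_j}$ forces $E_{i+1}\neq\emptyset$, and for sufficiency it asserts the existence of a nested sequence $\eta^s\subset\eta^{s+1}$ with $\eta^s\in E_s$ ``by definition'' and sets $\ga=\cup_s\eta^s$, without spelling out the K\"onig-type selection on the finitely-branching tree that you (rightly) make explicit.
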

\begin{proof}If there exists $\ga\in \Ga_\si$  then, for all $i\geq 1$, 
$\ga_{x^+_{i+1}}\in \Ga_{\tilde{\cup}_{j=1}^{i+1}\si_j}$;
thus $E_{i+1}\neq \emptyset$ by Theorem \ref{connectingsdc}. Vice versa if at each step $i\geq 1$ the essential family 
$E_{i+1}\neq \emptyset$, then, by definition, there exists a sequence $\{\eta^{i+1}\}$ of SDC  such that 
$\eta^{i+1}\in E_{i+1}$ and such that $\eta^s \subset \eta^{s+1}, s\geq 1$ (that is, at each step, 
$\eta^{s+1}$ is an extension of a previous one $\eta^s$) and  $\eta^{s+1}\in E_{\tilde{\cup}_{j=1}^{i+1}\si_j}$, see 
Definition \ref{defessenzialeiterativo}. Then is well defined $\ga=\cup_{i=1}^\infty \eta^{i+1}$; obviously $\ga \in \Ga_\si$.
 \end{proof}

 %In the $n$-dimensional case,  $n\geq 3$,  
 %the $\mathfrak{B}$-fences introduced in \S \ref{boundingforsigma} fence out the steepest descent curves 
 %but Remark \ref{Bnotsharp} shows that $\mathfrak{B}$-fences  are not sharp as $\mathfrak{J}$-fences. In the plane case 
%$\mathfrak{J}$-fences and $\mathfrak{G}$-fences fence in and fence out SDC  and they are sharp since their boundary
%consist of piece of involutes of the boundary of the convex body $K$  which are also $SDC_K$. \\

\vspace{1cm}

 {\bf Open problem}: In the present work only two dimensional problems are studied. In three (or more) dimensions 
 the construction of boundary regions to a SDC and to a $SDC_K$ is open. The boundary  regions should probably be constructed
by using  the space involutes of the geodesics curves on $\pa K$.
 
%\section{Appendix}
%\begin{lemma}\label{appendicelineapiubreve}
% Let $AOP$ un angolo convesso piano e $T=[a,b]\ni \tau\to x(\tau)$ una curva convessa $\lambda$ tale che
% $x(a)=O$, $x(b)$ interno al segmento $OP$, $ \lambda$ e' contenuta interamente nell'angolo $AOP$. Sia $C$ 
% il convesso delimitato dall'arco di curva $\lambda$ e  dal segmento $Ox(b)$.
%  Sia $Q=x(\tau), a <\tau <b , Q\in \pa C$ tale che la retta $r$ 
%passante per P e Q supporta $C$ in Q. Sia $\lambda_{OQ}$ l'arco dei punti di $\lambda$ compresi tra $O$ e $Q$.
%Sia $\ga$ una qualsiasi curva rettificabile 
% di punto iniziale O e punto finale $P$, contenuta interamente nell'angolo convesso $AOP$ e  tale che 
% $\ga$ non ha punti interni a $C$.
%Allora vale la seguente limitazione ottimale inferiore per la  lunghezza di $\ga$:
%\begin{equation}\label{lambdaottimale}
%|\ga|\geq |\lambda_{OQ}|+|PQ|.
%\end{equation}
%\end{lemma}
%\begin{proof} Si puo' supporre che $\ga$ ha
%in comune con il segmento $OP$ solo il suo punto iniziale $O$ e il suo punto finale $P$. 
%Altrimenti nell'enunciato del lemma si sostituisce $P$ con il primo punto $P'\neq O$ dove $\ga$ incontra la semiretta OP.
%Risulta $co(\ga)\supseteq co(\la_{OQ}\cup PQ)$, quindi
% $$|\ga|+|OP|\geq |\pa co(\ga)|\geq |\pa (co(\la_{OQ}\cup PQ))|=|\lambda_{OQ}|+|PQ|+|OP|.$$
% Sottraendo $|OP|$ alla disuguaglianza precedente, si la tesi.
%\end{proof}


\begin{thebibliography}{99}


\bibitem{Bolte} J.~Bolte, A.~Daniilidis, O.~Ley, L.~Mazet, {\ Characterizations
 of Lojasiewicz inequalities: subgradient flows, talweg, convexity}, Trans. Amer.
  Math. Soc. {\bf 362}, 3319-3363 (2010).

\bibitem{Bonnfen} T.~Bonnesen, W.~Fenchel, { Theory of Convex Bodies}, BCS
 Associates (1987).


\bibitem{Daniilidis3} A.~Daniilidis, G.~David, E.~Durand-Cartagena,   A.~Lemenant,
{ Rectifiability of self-contracted in the euclidean space and applications},
J. Geom. Anal. {\bf 25}, 1211-1239 (2015).

\bibitem{Daniilidis}  A.~Daniilidis, O.~Ley,  S.~Sabourau, { Asymptotic behaviour
 of self-contracting planar curves and gradient orbits of convex functions}, J. Math.
  Pures Appl. {\bf 94}, 183-199 (2010).
\bibitem{Defi} B.~De Finetti,
{Sulle stratificazioni convesse},
Ann. Mat. Pura Appl.  {\bf 30}(4), 173-183 (1949).
\bibitem{fenc} W. Fenchel, { Convex Cones, Sets and Functions} (Univ. Press, Princeton, 1953).
\bibitem{gugg} H.W. Guggenheimer, { Differential geometry} (Mc Graw-Hill, 1963).
\bibitem{Langetepe}C.~Icking, R.~Klein, E.~Langetepe,
{ Self-approaching curves}, Math. Proceedings Cambridge Philos. Sc.{\bf 125}, 441-453 (1999).
\bibitem{MLV} M.~Longinetti, P.~Manselli,  A.~Venturi,
{On  steepest descent curves for quasi convex families in $\RR^n$ },
Math. Nachr. {\bf 288}, 420-442 (2015). 

\bibitem{selfdual} M.~Longinetti, P.~Manselli,  A.~Venturi,
{On variational problems related to steepest descent curves
 and self dual convex sets on the sphere},
Appl Anal {\bf 94}, 294-307 (2015).
\bibitem{Mainik}
I.F.~Ma\v{i}nik, { An estimate of the length of the curves of descent}, Sibirski\v{i} Mathematichesli\v{i} Zhurnal
 {\bf 33}, 215-218 (1992).


\bibitem{Manselli-Pucci} P.~Manselli, C.~Pucci,
{Maximum length of Steepest descent curves for Quasi-convex Functions},
Geometriae Dedicata {\bf 38}, 211-227 (1991).
\bibitem{Rock} R.T. Rockafellar, {Convex Analysis}, Princeton Math series {\bf 28} (1970).
\bibitem{Schn} R. Schneider,
{Convex bodies: The Brunn-Minkowski Theory} (Cambridge University Press, Cambridge, 1993).
\end{thebibliography}
\end{document}